\theoremstyle{plain}
\newtheorem{Scheme}{\protect\schemename}
\theoremstyle{plain}
\newtheorem{thm}{\protect\theoremname}
\newtheorem{thm}{\protect\theoremname}[section]
\theoremstyle{remark}
\newtheorem{rem}{\protect\remarkname}
\newtheorem{rem}{\protect\remarkname}[section]
\theoremstyle{plain}
\newtheorem{lem}{\protect\lemmaname}
\theoremstyle{definition}
\newtheorem{example}{\protect\examplename}
\theoremstyle{definition}
\newtheorem{prop}{\protect\propname}
\newtheorem{prop}{\protect\propname}[section]
\providecommand{\examplename}{Example}
\providecommand{\schemename}{Scheme}
\providecommand{\lemmaname}{Lemma}
\providecommand{\remarkname}{Remark}
\providecommand{\theoremname}{Theorem}
\providecommand{\propname}{Proposition}
\begin{document}
\title{The virtual element method for the three dimensional inductionless magnetohydrodynamic model}

\author{Xianghai Zhou}
\address{College of Mathematics and System Sciences, Xinjiang University, Urumqi 830046, China. Email: zxhmath166@163.com.}

\author{Haiyan Su}
\address{Corresponding author. College of Mathematics and System Sciences, Xinjiang University, Urumqi 830046, China. Email: shymath@126.com.}


\begin{abstract}
This paper proposes a novel first-order and a novel second-order fully discrete virtual element schemes based on the scalar auxiliary variable method for the three dimensional inductionless magnetohydrodynamics problem. The backward Eular formula and the backward differential formula are used for the time discretization and two types conservation virtual element formulations are employed for spatial discretization. The main advantages include that the mass conservation in the velocity field and the charge conservation in the current density field are kept by taking characteristics of the virtual element method's discrete complex structures, the nonlinear term is handled explicitly by applying the scalar auxiliary variable method, the current density field is decoupled from the momentum equation, and the velocity field is decoupled from the Ohm's law. The unconditionally stable of the two fully discrete schemes are demonstrated. Finally, we present numerical experiment to verify the valid of the proposed schemes.
\end{abstract}
%
%
%
\keywords{Inductionless MHD equations, Virtual element method, Scalar auxiliary variable method, Polyhedral meshes}
\maketitle
\section{Introduction}
Magnetohydrodynamics (MHD) is a branch of physics that studies the motion laws of electrically conducting fluids under the influence of a magnetic field. MHD has been widely applied in various fields such as liquid metal magnetic pumps, cooling of liquid metals in fusion reactors, aluminum electrolysis, magnetohydrodynamic engines, electromagnetic stirring, and geodynamics \cite{roberts1967introduction,cowling1976magnetohydrodynamics}. However, when the magnetic Reynolds number is small, the induced magnetic field can be disregarded compared to the external magnetic field, or it may reach saturation, in which case the magnetic field $\boldsymbol{B}$ can be treated as known \cite{ni2007currentI,ni2007currentII}. Other relevant models can be found in \cite{badia2014block,long2022convergence,wang2024decoupled}.

Due to its wide range of applications, the research on effective numerical methods for approximating solution to the MHD equations has been a focal point of study. The research outcomes in this field are abundant, and a comprehensive summary would exceed the scope of this paper.
Therefore, here we only make a simple classification and comment on these contributions. Because the critical nature (and limitations) of these schemes help inspire the introduction of our new approach. A type stabilized finite element method is proposed to deal with the convective dominance and compatibility conditions of the inductionless MHD problem in \cite{planas2011approximation}. Some finite element methods based on charge-conservation solve the incompressible inductionless MHD problem are proposed in \cite{li2019charge,li2019chargeR,zhang2021coupled,zhang2022decoupled,zhang2022fully}. In \cite{zhou2024local}, the authors propose local and parallel finite element methods based on charge-conservation to deal with large linear algebraic systems arising from solving the stationary inductionless MHD problem.

In \cite{shen2018scalar}, Shen et al. propose the SAV method for the gradient flow and its attraction lies in the numerical approximation of nonlinear systems.  The main reason is that this method allows us to design numerical schemes that are unconditionally energy stable and require only the solution of a decoupled linear system with constant coefficients at each time step. Due to its efficiency, flexibility and accuracy, it has been a powerful approach to develop energy stable numerical schemes for general dissipative systems, such as Navier-Stokes problem \cite{lin2019numerical,li2022new}, Cahn-Hilliard-Navier-Stokes problem \cite{yang2021novel,li2022fully,li2023consistency} and MHD problem \cite{yang2021second,li2022stability,dong2024optimal}.
In \cite{zhang2024stability}, the authors present stability and error analysis of the scalar auxiliary variable (SAV) schemes for the inductionless MHD equations.

The VEM method can be viewed as an extension of the FEM and was originally introduced in \cite{beirao2013basic}, and its practical implementation details were introduced in \cite{beirao2014hitchhiker}. The main characteristics of the VEM method include the ability of handling very general polygonal/polyedral meshes, the feasibility of guaranteeing physical properties, the possibility of easily implementing highly order approximation and highly regular discrete spaces, and it avoids an explicit expression of a local basis function and includes (but is not limits to) standard polynomials (only need to define appropriate the degrees of freedom and some operators involved in the discretization of the problem). These characteristics lead to its application to a wide range of problems. Here, we only exhibit some contributions related to this paper; for details, see\cite{brezzi2014basic,da2016h,caceres2017mixed,da2018family,da2018virtual,beirao2020stokes,alvarez2021virtual,naranjo2023virtual,beirao2023virtual,zhou2023full,dong2024virtual}.
In \cite{zhou2023full}, the authors  propose a full divergence-free of high order virtual finite element method to approximation of stationary inductionless MHD equations on polygonal meshes. However, as far as is known, VEM has made less contribution to dealing with three-dimensional problems, especially for MHD problems.

The aim of the present paper is to construct a novel first-order and a novel second-order fully discrete virtual element approximations of the three dimensional incompressible inducitonless MHD equations, which easier to obtain a fully divergence-free numerical approximation scheme than standard FEMs and thus guarantee the physical properties of the model. Moreover, the nonlinear term is handled explicitly based on the SAV method. The current density field is decoupled from the momentum equation, and the velocity field is decoupled from Ohm's law. Thus, we only need to solve the Stokes and Mixed-Poisson subproblems with constant coefficients at each time step, which ensure the high efficiency and unconditional stability of schemes. The backward Eular formula and the backward differential formula are used for the time discretization, and two types conservation virtual element formulations (employing the enhanced Stokes-like virtual element \cite{beirao2020stokes} to approximate the velocity, discontinuous piecewise polynomials for the pressure and electric potential, and $\boldsymbol{H}(\text{div},\mathrm{\Omega})$-conforming virtual element for the current density) are employed for spatial discretization.

The paper is organized as follows. In Section 2, we present the SAV scheme and its the weak formulation. In Section 3, we introduce the virtual element spaces for velocity, current density, pressure, and electric potential. Furthermore, we provide definitions of discrete bilinear and trilinear forms, and summarize some related properties. In Section 4, we propose a novel first-order and a novel second-order fully discrete virtual element approximations based on SAV method for the three dimensional incompressible inducitonless MHD equations and establish the uncondition stability estimates. In Section 5, we present some numerical experiments. Finally, conclusions are drawn in Section 6.
\subsection{Notations and continuous spaces}
Let $\mathrm{G}\subset\mathbb{R}^{3}$ be an open, bound and connected subset, we introduce some Sobolev spaces \cite{li2006finite}. Let $W^{m,p}(\mathrm{G})$ denote the standard Sobolev space equipped with norm $\|\cdot\|_{m,p}$ for $m\in\mathbb{N}^{+}$, $1\leq p\leq\infty$. For $m=0$,
$L^{p}(\mathrm{G})$ is the space of $p$-integrable functions with norm denoted by $\|\cdot\|_{L^{p}(\mathrm{G})}$ and equip with norm $\|u\|_{0,\mathrm{G}}:=\|u\|_{L^{2}(\mathrm{G})}$.
For $p = 2$, we write $H^{m}(\mathrm{G})$ for $W^{m,2}(\mathrm{G})$ and equip with norm $\|\cdot\|_{m}$ and semi-norm $|\cdot|_{m}$. To simplify, introducing some notations for function spaces
\begin{align*}
H_{0}^{1}(\mathrm{G})=\Big\{v\in H^{1}(\mathrm{G}),~v=0, \text{ on }\partial\mathrm{G}\Big\},\quad L_{0}^{2}(\mathrm{G})=\Big\{q\in L^{2}(\mathrm{G}), \int_{\mathrm{G}}q\mathrm{d}x=0\Big\},\\
\boldsymbol{H}_{0}(\text{div},\mathrm{G})=\Big\{\boldsymbol{v}\in [L^{2}(\mathrm{G})]^{3},\text{div}\boldsymbol{v}\in L^{2}(\mathrm{G}),~\boldsymbol{v}\cdot\boldsymbol{n}=0,\text{ on }\partial\mathrm{G}\Big\},
\end{align*}
and setting `` $\mathrm{G}=\mathrm{\Omega}$ '', we define
\begin{equation*}
\boldsymbol{U}:=[H_{0}^{1}(\mathrm{\Omega})]^{3},\quad\boldsymbol{E}:=\boldsymbol{H}_{0}(\text{div},\mathrm{\Omega}),\quad Q:=L_{0}^{2}(\mathrm{\Omega}),\quad \Psi:=L_{0}^{2}(\mathrm{\Omega}).
\end{equation*}

\subsection{The time dependent inductionless MHD model\label{IMHD system}}
In this paper, we consider the three dimensional time dependent incompressible inductionless MHD
equations as follows \cite{ni2007currentII}:
\begin{subequations}
	\label{model:IMHD}
	\begin{align}
		\boldsymbol{u}_{t}-R_{e}^{-1}\Delta\boldsymbol{u}+(\nabla\boldsymbol{u})\boldsymbol{u}+\nabla p - \kappa\boldsymbol{J}\times\boldsymbol{B} =\boldsymbol{0}&\qquad\text{ in }\mathrm{\Omega}\times(0,\mathrm{T}],\label{model:u}\\
\boldsymbol{J}+\nabla \phi - \boldsymbol{u}\times\boldsymbol{B} =\boldsymbol{0}&\qquad\text{ in }\mathrm{\Omega}\times(0,\mathrm{T}],\label{model:J}\\		\mathrm{div}\boldsymbol{u}  =0,\quad\mathrm{div}\boldsymbol{J} =0&\qquad\text{ in }\mathrm{\Omega}\times(0,\mathrm{T}],\label{model:divu div J}\\
		\boldsymbol{u}(\boldsymbol{x},0)  =\boldsymbol{u}_{0}(\boldsymbol{x})&\qquad\text{ in }\mathrm{\Omega},\label{model:uinit}\\
		\boldsymbol{u}  =\boldsymbol{g}&\qquad\text{ on }\mathrm{\Gamma}_{d}\times(0,\mathrm{T}],\label{model:ubc1}\\
R_{e}^{-1}\frac{\partial\boldsymbol{u}}{\partial\boldsymbol{n}}-p\boldsymbol{n}=0&\qquad\text{ on }\mathrm{\Gamma}_{n}=\mathrm{\Gamma}_{\mathrm{\Omega}}\setminus\mathrm{\bar{\Gamma}}_{d}\times(0,\mathrm{T}],\label{model:ubc2}\\
		\boldsymbol{J}\cdot\boldsymbol{n}  =0&\qquad\text{ on }\mathrm{\Gamma}_{i}\times(0,\mathrm{T}],\label{model:ubc3}\\
		\phi  =\Xi&\qquad\text{ on }\mathrm{\Gamma}_{c}=\mathrm{\Gamma}_{\mathrm{\Omega}}\setminus\mathrm{\bar{\Gamma}}_{i}\times(0,\mathrm{T}],\label{model:ubc3}
	\end{align}
\end{subequations}
where $\Omega\subset\mathbb{R}^{3}$ is a bounded domain with
Lipschitz-continuous boundary $\Gamma\coloneqq\partial\Omega$,
$T>0$ is the final time, $\boldsymbol{n}$ is the unit outer normal to $\mathrm{\Omega}$. The initial condition $\boldsymbol{u}_{0}\in[H^{1}(\mathrm{\Omega})]^3$ satisfies $\text{div} \boldsymbol{u}_{0}=0$. The unknowns are the
velocity $\boldsymbol{u}$, the current density $\boldsymbol{J}$, and the pressure $p$, and the electric potential $\phi$.
The parameters $R_{e}>0$ and $\kappa>0$ are the the Reynolds number and the coupling number between the fluid and the current density, respectively. The parameters are given by $R_{e} = \rho Lu_{0}/\nu$ and $\kappa = \sigma LB_{0}^{2}/(\rho u_{0})$ (see \cite{ni2007currentI}). Generally, $\Gamma_{d}$ denotes the inflow boundary, $\Gamma_{n}$ denotes the outflow boundary,
$\Gamma_{i}$ denotes the insulating boundary, $\Gamma_{c}$ denotes the conductive boundary, please refer to \cite{ni2007currentI,ni2007currentII} for the more details, and where $(\boldsymbol{g},\Xi)\in \boldsymbol{H}^{1/2}(\Gamma_{d})\times H^{1/2}(\Gamma_{c})$. For the sake of simplicity, only theoretical analyses under homogeneous Dirichlet boundary conditions
\begin{equation}
\boldsymbol{u}=\boldsymbol{0},\quad
\boldsymbol{J}\cdot \boldsymbol{n}=0\quad\text{on}\;\mathrm{\Gamma_\Omega}\times(0,\mathrm{T}],
\label{Boundary:Dirichlet}
\end{equation}
are considered in this paper.

The time dependent inductionless MHD model is an energy dissipative
system. Namely, we take the inner products of (\ref{model:u}) with $\boldsymbol{u}$ and (\ref{model:J}) with $\boldsymbol{J}$, respectively, and then combine these with (\ref{model:divu div J}) to obtain the following energy dissipation law
\[
\frac{{\rm d}}{{\rm d}t}{\rm E}_{\text{IMHD}}(\boldsymbol{u})=-R_{e}^{-1}\left\Vert \nabla\boldsymbol{u}\right\Vert_{0,\mathrm{\Omega}}^{2}-\kappa\left\Vert \boldsymbol{J}\right\Vert_{0,\mathrm{\Omega}}^{2}\text{ with }{\rm E}_{\text{IMHD}}(\boldsymbol{u})=\frac{1}{2}\left\Vert \boldsymbol{u}\right\Vert_{0,\mathrm{\Omega}}^{2}.
\]
The energy dissipation law is a very important property for this model
in physics and mathematics. Thus, it is preferred to propose the
numerical schemes which preserve
a dissipative energy law at the discrete level.

Here, we use the fact that is
\begin{equation}
\big((\nabla\boldsymbol{u})\boldsymbol{u},\boldsymbol{u}\big)=0,\quad \kappa(\boldsymbol{J}\times\boldsymbol{B},\boldsymbol{u})-\kappa(\boldsymbol{u}\times\boldsymbol{B},\boldsymbol{J})=0.
\label{eq:zero}
\end{equation}
These features are also called \textquotedblleft zero-energy-contribution\textquotedblright ,
which play an important role in designing efficient scheme.

\section{an equivalent system\label{continuous}}

\subsection{The SAV scheme for the inductionless equations\label{sec:SAV schemes IMHD}}
Inspired by \cite{yang2021novel,li2022new}, we introduce scalar auxiliary
variable $s(t)$ as
\begin{equation}
	s(t)=R(t)=\exp\left(-t/T\right),\label{eq:SAV}
\end{equation}
By taking the derivative of (\ref{eq:SAV}) with respect to $t$,
we obtain
\[
\frac{\mathrm{d}s}{\mathrm{dt}}=-sQ,\quad\text{with}\quad Q=\frac{1}{T}.
\]
Observed that this is a linear and dissipative or conservative ordinary
differential equation for the SAV. This feature
is vital for designing unconditionally energy-stable and linear decoupled
scheme.
In light of equation $s(t)/R(t)=1$ and (\ref{Boundary:Dirichlet})-(\ref{eq:zero}), we
can rewrite the original system (\ref{model:IMHD}) as
the equivalent system
\begin{subequations}
	\begin{align}
		\boldsymbol{u}_{t}-R_{e}^{-1}\Delta\boldsymbol{u}+\nabla p+\frac{s}{R}(\nabla\boldsymbol{u})\boldsymbol{u}-\kappa\frac{s}{R}\boldsymbol{J}\times\boldsymbol{B} & =\boldsymbol{0},\label{modeleq:u}\\
		\mathrm{div}\boldsymbol{u} & =0,\label{modeleq:divu}\\
\kappa\boldsymbol{J}+\kappa\nabla \phi-\kappa\frac{s}{R}\boldsymbol{u}\times\boldsymbol{B} & =\boldsymbol{0},\label{modeleq:J}\\
		\mathrm{div}\boldsymbol{J} & =0,\label{modeleq:divJ}\\ \frac{\mathrm{d}s}{\mathrm{dt}}+sQ-\frac{1}{R}\big((\nabla\boldsymbol{u})\boldsymbol{u},\boldsymbol{u}\big) +\frac{\kappa}{R}\big(\boldsymbol{J}\times\boldsymbol{B},\boldsymbol{u}\big) +\frac{\kappa}{R}\big(\boldsymbol{u}\times\boldsymbol{B},\boldsymbol{J}\big)
& =0.\label{mdoeleq:s}
	\end{align}
	\label{model:SAV}
\end{subequations}
Besides, it is not difficult to see that the reformulated system admits
the following energy law
\begin{equation*}
	\frac{{\rm d}}{{\rm d}t}{\rm E}_{\text{IMHD-SAV}}(\boldsymbol{u})=-R_{e}^{-1}\left\Vert \nabla\boldsymbol{u}\right\Vert_{0,\mathrm{\Omega}}^{2}-\kappa\left\Vert\boldsymbol{J}\right\Vert_{0,\mathrm{\Omega}}^{2}-s^{2}Q \text{ with }
{\rm E}_{\text{IMHD-SAV}}(t)=\frac{1}{2}\left\Vert \boldsymbol{u}\right\Vert_{0,\mathrm{\Omega}}^{2}+\frac{1}{2}\left|s\right|^{2}.
\label{eq:KVSAV}
\end{equation*}
It is not difficult to observe that the equivalent system modifies the free energy with new auxiliary variable  but still maintains the energy dissipation property.
\subsection{Weak formulation}

For convenience, we define the bilinear forms $A_{1}(\cdot,\cdot)$, $A_{2}(\cdot,\cdot)$, $B(\cdot,\cdot)$, $C_{1}(\cdot,\cdot)$, $C_{2}(\cdot,\cdot)$, and $D(\cdot,\cdot)$ as
\begin{align}
\label{A1}
A_{1}(\cdot,\cdot):\boldsymbol{U}\times\boldsymbol{U}\rightarrow
\mathbb{R},\quad A_{1}(\boldsymbol{v},\boldsymbol{w})&:=\int_{\mathrm{\Omega}}\boldsymbol{v}\cdot\boldsymbol{w}\mathrm{d}\mathrm{\Omega},\\\label{A2}
A_{2}(\cdot,\cdot):\boldsymbol{E}\times\boldsymbol{E}\rightarrow
\mathbb{R},\quad A_{2}(\boldsymbol{J},\boldsymbol{K})&:=\kappa\int_{\mathrm{\Omega}}\boldsymbol{J}\cdot\boldsymbol{K}\mathrm{d}\mathrm{\Omega},\\\label{B}
B(\cdot,\cdot):\boldsymbol{U}\times\boldsymbol{U}\rightarrow
\mathbb{R},\quad B(\boldsymbol{v},\boldsymbol{w})&:=R_{e}^{-1}\int_{\mathrm{\Omega}}\nabla\boldsymbol{v}:\nabla\boldsymbol{w}\mathrm{d}\mathrm{\Omega},\\\label{C1}
C_{1}(\cdot,\cdot):\boldsymbol{U}\times Q\rightarrow
\mathbb{R},\quad C_{1}(\boldsymbol{v},q)&:=\int_{\mathrm{\Omega}}\text{div}\boldsymbol{v}q\mathrm{d}\mathrm{\Omega},\\\label{C2}
C_{2}(\cdot,\cdot):\boldsymbol{E}\times \Psi\rightarrow
\mathbb{R},\quad C_{2}(\boldsymbol{K},\psi)&:=\kappa\int_{\mathrm{\Omega}}\text{div}\boldsymbol{K}\psi\mathrm{d}\mathrm{\Omega},\\\label{D}
D(\cdot,\cdot):\boldsymbol{E}\times\boldsymbol{U}\rightarrow
\mathbb{R},\quad D(\boldsymbol{K},\boldsymbol{v})&:=\kappa\int_{\mathrm{\Omega}}\boldsymbol{K}\times\boldsymbol{B}\cdot\boldsymbol{v}\mathrm{d}\mathrm{\Omega},
\end{align}
and the trilinear form $E(\cdot;\cdot,\cdot)$ is defined by
\begin{equation}
\label{E}
E(\cdot;\cdot,\cdot):\boldsymbol{U}\times
\boldsymbol{U}\times
\boldsymbol{U}\rightarrow
\mathbb{R},\quad E(\boldsymbol{u};\boldsymbol{v},\boldsymbol{w}):=
\int_{\mathrm{\Omega}}(\nabla\boldsymbol{v})\boldsymbol{u}
\cdot\boldsymbol{w}\mathrm{d}\mathrm{\Omega}.
\end{equation}
Moreover, for a fixed $\boldsymbol{u}\in\boldsymbol{Z}$, the bilinear form $E(\cdot;,\cdot,\cdot)$ is skew-symmetric, i.e.
\begin{equation}
E(\boldsymbol{u};\boldsymbol{v},\boldsymbol{w})=-E(\boldsymbol{u};\boldsymbol{w},\boldsymbol{v})\quad\text{for all }\boldsymbol{v},\boldsymbol{w}\in\boldsymbol{U},
\end{equation}
thus we give another equivalently form
\begin{equation}
E_{s}(\boldsymbol{u};\boldsymbol{v},\boldsymbol{w}):=
\frac{1}{2}E(\boldsymbol{u};\boldsymbol{v},\boldsymbol{w})-
\frac{1}{2}E(\boldsymbol{u};\boldsymbol{w},\boldsymbol{v})\quad\text{for all }\boldsymbol{u},\boldsymbol{v},\boldsymbol{w}\in\boldsymbol{U}.
\end{equation}
Based the above notations, the variational formulation of the system (\ref{model:SAV}) reads as follows: for almost every $t\in[0,\mathrm{T}]$, we find $\big(\boldsymbol{u}(t),p(t),\boldsymbol{J}(t),\phi(t)\big)\in \boldsymbol{U}\times
Q\times\boldsymbol{E}\times
\Psi$ and $s\in\mathbb{R}$, such that for all $\big(\boldsymbol{v},q,\boldsymbol{J},\psi\big)\in \boldsymbol{U}\times
Q\times\boldsymbol{E}\times
\Psi$ there hold
\begin{subequations}
	\begin{align}
		A_{1}(\boldsymbol{u}_{t},\boldsymbol{v})+B(\boldsymbol{u},\boldsymbol{v})+C_{1}(\boldsymbol{v},p)
+\frac{s}{R}E_{s}(\boldsymbol{u};\boldsymbol{u},\boldsymbol{v}) -\frac{s}{R}D(\boldsymbol{J},\boldsymbol{v})&=0\label{variational-eq:u}\\
C_{1}(\boldsymbol{u},q) &=0,\label{variational-eq:divu}\\ A_{2}(\boldsymbol{J},\boldsymbol{K})+C_{2}(\boldsymbol{K},\phi)
+\frac{s}{R}D(\boldsymbol{K},\boldsymbol{u})&=0,\label{variational-eq:J}\\
C_{2}(\boldsymbol{J},\psi) &=0,\label{variational-eq:divJ}\\ \frac{\mathrm{d}s}{\mathrm{dt}}+sQ-\frac{1}{R}E_{s}(\boldsymbol{u};\boldsymbol{u},\boldsymbol{u}) +\frac{1}{R}D(\boldsymbol{J},\boldsymbol{u})-\frac{1}{R}D(\boldsymbol{J},\boldsymbol{u})
 & =0.\label{variational-eq:s}
	\end{align}
	\label{variational-eq}
\end{subequations}

Let us introduce the two kernel of the bilinear forms $C_{1}(\cdot,\cdot)$ and $C_{2}(\cdot,\cdot)$, i.e.
\begin{align*}
\boldsymbol{Z}:=&\{\boldsymbol{v}\in\boldsymbol{U}\quad \text{s.t.}\quad C_{1}(\boldsymbol{v},q)=0~\text{for all}~q\in Q\},\\
\boldsymbol{Y}:=&\{\boldsymbol{K}\in\boldsymbol{E}\quad \text{s.t.}\quad C_{2}(\boldsymbol{K},\psi)=0~\text{for all}~\psi\in \Psi\},
\end{align*}
then the equivalent kernel form of the system (\ref{variational-eq}) can be
inferred as follows: for almost every $t\in[0,T]$, we find $(\boldsymbol{u}(t),\boldsymbol{J}(t))\in\boldsymbol{Z}\times\boldsymbol{Y}$ and $s\in\mathbb{R}$ such that
\begin{subequations}
	\begin{align}
		A_{1}(\boldsymbol{u}_{t},\boldsymbol{v})+B(\boldsymbol{u},\boldsymbol{v})
+\frac{s}{R}E_{s}(\boldsymbol{u};\boldsymbol{u},\boldsymbol{v}) -\frac{s}{R}D(\boldsymbol{J},\boldsymbol{v})&=0,\label{variational-eq:u-kernel}\\
A_{2}(\boldsymbol{J},\boldsymbol{K})
+\frac{s}{R}D(\boldsymbol{K},\boldsymbol{u})&=0,\label{variational-eq:J-kernel}\\ \frac{\mathrm{d}s}{\mathrm{dt}}+sQ-\frac{1}{R}E_{s}(\boldsymbol{u};\boldsymbol{u},\boldsymbol{u}) +\frac{1}{R}D(\boldsymbol{J},\boldsymbol{u})-\frac{1}{R}D(\boldsymbol{J},\boldsymbol{u})
 & =0.\label{variational-eq:s-kernel}
	\end{align}
	\label{variational-eq-kernel}
\end{subequations}
for all $(\boldsymbol{v},\boldsymbol{K})\in\boldsymbol{Z}\times\boldsymbol{Y}$.

Next, we summarize some key properties of the bilinear and trilinear forms defined above, see \cite{john2016finite,zhang2021coupled,zhou2024local} for the details.
\begin{prop}
For any $\boldsymbol{u},\boldsymbol{v},\boldsymbol{w}\in\boldsymbol{U}$ and $\boldsymbol{J},\boldsymbol{K}\in\boldsymbol{E}$, the continuous forms $A_{1}(\cdot,\cdot):\boldsymbol{U}\times\boldsymbol{U}\rightarrow\mathbb{R}$,
 $A_{2}(\cdot,\cdot):\boldsymbol{E}\times\boldsymbol{E}\rightarrow\mathbb{R}$, $B(\cdot,\cdot):\boldsymbol{U}\times\boldsymbol{U}\rightarrow\mathbb{R}$ and $E_{s}(\cdot;\cdot,\cdot):\boldsymbol{U}\times\boldsymbol{U}\times\boldsymbol{U}\rightarrow\mathbb{R}$ satisfy the following properties:
\begin{align}
\label{A1-continuous}
a_{0}|\boldsymbol{u}|_{1,\mathrm{\Omega}}^2&\leq A_{1}(\boldsymbol{u},\boldsymbol{u})~\text{and}~A_{1}(\boldsymbol{u},\boldsymbol{v})\leq a_{1}|\boldsymbol{u}|_{1,\mathrm{\Omega}}|\boldsymbol{v}|_{1,\mathrm{\Omega}},\\\label{A2-continuous}
a_{2}\|\boldsymbol{K}\|_{0,\mathrm{\Omega}}^2&\leq A_{2}(\boldsymbol{K},\boldsymbol{K})~\text{and}~A_{2}(\boldsymbol{J},\boldsymbol{K})\leq a_{3}\|\boldsymbol{J}\|_{0,\mathrm{\Omega}}\|\boldsymbol{K}\|_{0,\mathrm{\Omega}},\\\label{B-continuous}
a_{4}|\boldsymbol{u}|_{1,\mathrm{\Omega}}^2&\leq B(\boldsymbol{u},\boldsymbol{u})~\text{and}~B(\boldsymbol{u},\boldsymbol{v})\leq a_{5}|\boldsymbol{u}|_{1,\mathrm{\Omega}}|\boldsymbol{v}|_{0,\mathrm{\Omega}},\\\label{C-continuous}
E_{s}(\boldsymbol{u};\boldsymbol{v},\boldsymbol{w})&\leq a_{6}|\boldsymbol{u}|_{1,\mathrm{\Omega}}|\boldsymbol{v}|_{1,\mathrm{\Omega}}|\boldsymbol{w}|_{1,\mathrm{\Omega}}
~\text{and}~E_{s}(\boldsymbol{u};\boldsymbol{v},\boldsymbol{v})=0,
\end{align}
where $a_{i}(i=0,\cdots,6)$ are positive constants and independent of $h$.
\end{prop}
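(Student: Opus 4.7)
The plan is to dispatch the four listed properties in turn, using only elementary inequalities together with a single application of the three-dimensional Sobolev embedding. The coercivity/continuity statements for $A_{1}$, $A_{2}$, and $B$ are essentially trivial: each of these forms is, up to a positive multiplicative constant, an $L^{2}$- or $H^{1}$-type inner product on the relevant space, namely $A_{1}(\boldsymbol{u},\boldsymbol{u}) = \|\boldsymbol{u}\|_{0,\mathrm{\Omega}}^{2}$, $A_{2}(\boldsymbol{K},\boldsymbol{K}) = \kappa\|\boldsymbol{K}\|_{0,\mathrm{\Omega}}^{2}$, and $B(\boldsymbol{u},\boldsymbol{u}) = R_{e}^{-1}|\boldsymbol{u}|_{1,\mathrm{\Omega}}^{2}$. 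Coercivity can be read off directly, and the Cauchy--Schwarz inequality yields continuity with constants involving only $\kappa$ and $R_{e}^{-1}$. Whenever an $H^{1}$-type bound is required on the left-hand side, one invokes the Poincar\'e inequality, which is available because $\boldsymbol{U}\subset[H_{0}^{1}(\mathrm{\Omega})]^{3}$.

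The only non-trivial piece is the continuity estimate for $E_{s}$, so I would spend most of the write-up there. Starting from the definition
\[
E(\boldsymbol{u};\boldsymbol{v},\boldsymbol{w}) = \int_{\mathrm{\Omega}} (\nabla\boldsymbol{v})\boldsymbol{u}\cdot\boldsymbol{w}\,\mathrm{d}\mathrm{\Omega},
\]
the natural generalized H\"older splitting in three dimensions is with exponents $(3,2,6)$, yielding
\[
|E(\boldsymbol{u};\boldsymbol{v},\boldsymbol{w})| \leq \|\boldsymbol{u}\|_{L^{3}(\mathrm{\Omega})}\,|\boldsymbol{v}|_{1,\mathrm{\Omega}}\,\|\boldsymbol{w}\|_{L^{6}(\mathrm{\Omega})}.
\]
The critical Sobolev embeddings $H^{1}(\mathrm{\Omega})\hookrightarrow L^{6}(\mathrm{\Omega})$ and $H^{1}(\mathrm{\Omega})\hookrightarrow L^{3}(\mathrm{\Omega})$, combined once more with Poincar\'e on $\boldsymbol{U}$, upgrade both the $L^{3}$ and $L^{6}$ factors to $|\cdot|_{1,\mathrm{\Omega}}$, producing the claimed bound with a constant $a_{6}$ depending only on $\mathrm{\Omega}$. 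Passing from $E$ to $E_{s}$ at most doubles this constant. The identity $E_{s}(\boldsymbol{u};\boldsymbol{v},\boldsymbol{v})=0$ is a one-line consequence of the antisymmetrization in the very definition of $E_{s}$, since the two halves cancel identically without any regularity or divergence-free assumption on $\boldsymbol{u}$.

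The main obstacle is therefore the three-dimensional Sobolev embedding, where $p=6$ is the borderline exponent and no further margin is available; under the standing assumption that $\mathrm{\Omega}$ has Lipschitz boundary this is classical, and the resulting embedding constants depend only on $\mathrm{\Omega}$ (and on $R_{e}$, $\kappa$ where appropriate), so every $a_{i}$ is manifestly independent of the mesh parameter $h$ as claimed. Beyond this single embedding step, all remaining arguments reduce to routine applications of Cauchy--Schwarz and Poincar\'e, and the proof of the proposition could be written up in a single short paragraph.
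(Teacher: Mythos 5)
First, note that the paper does not actually prove this proposition: it simply defers to the cited references, so there is no internal argument to compare yours against. Your overall strategy is the standard one, and your treatment of the only substantive item --- the continuity of $E_{s}$ via the generalized H\"older inequality with exponents $(3,2,6)$, the embeddings $H^{1}(\mathrm{\Omega})\hookrightarrow L^{6}(\mathrm{\Omega})$ and $H^{1}(\mathrm{\Omega})\hookrightarrow L^{3}(\mathrm{\Omega})$, and Poincar\'e on $[H_{0}^{1}(\mathrm{\Omega})]^{3}$ --- is correct, as is the one-line cancellation giving $E_{s}(\boldsymbol{u};\boldsymbol{v},\boldsymbol{v})=0$ and your handling of $A_{2}$ and of the coercivity of $B$.

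There is, however, a genuine gap at \eqref{A1-continuous}. You assert that coercivity ``can be read off directly,'' with Poincar\'e supplying any $H^{1}$-type bound on the left-hand side. But the printed lower bound is $a_{0}|\boldsymbol{u}|_{1,\mathrm{\Omega}}^{2}\leq A_{1}(\boldsymbol{u},\boldsymbol{u})=\|\boldsymbol{u}\|_{0,\mathrm{\Omega}}^{2}$, i.e.\ the $H^{1}$ seminorm controlled by the $L^{2}$ norm. Poincar\'e runs in exactly the opposite direction ($\|\boldsymbol{u}\|_{0,\mathrm{\Omega}}\leq C|\boldsymbol{u}|_{1,\mathrm{\Omega}}$), and no reverse Poincar\'e inequality holds on the infinite-dimensional space $[H_{0}^{1}(\mathrm{\Omega})]^{3}$: increasingly oscillatory $\boldsymbol{u}$ with fixed $L^{2}$ norm make $|\boldsymbol{u}|_{1,\mathrm{\Omega}}$ arbitrarily large. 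So that inequality as printed is false and your argument cannot establish it; it is evidently a misprint for $a_{0}\|\boldsymbol{u}\|_{0,\mathrm{\Omega}}^{2}\leq A_{1}(\boldsymbol{u},\boldsymbol{u})$ (trivially true with $a_{0}=1$), and a careful write-up should say so rather than claim the stated form follows. The same issue, in milder form, affects the continuity bound in \eqref{B-continuous}: Cauchy--Schwarz yields $B(\boldsymbol{u},\boldsymbol{v})\leq R_{e}^{-1}|\boldsymbol{u}|_{1,\mathrm{\Omega}}|\boldsymbol{v}|_{1,\mathrm{\Omega}}$, not the printed bound with $|\boldsymbol{v}|_{0,\mathrm{\Omega}}$ on the right, which again cannot hold for all $\boldsymbol{v}$. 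Flag both as typographical errors and prove the corrected statements; with that emendation your sketch is complete.
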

\begin{prop}
The continuous forms $C_{1}(\cdot,\cdot):\boldsymbol{U}\times Q\rightarrow\mathbb{R}$ and $C_{2}(\cdot,\cdot):\boldsymbol{E}\times \Psi\rightarrow\mathbb{R}$ satisfy the following properties:
\begin{align*}
\sup_{\boldsymbol{0}\neq\boldsymbol{u}\in\boldsymbol{U}}
\frac{C_{1}(\boldsymbol{u},q)}{|\boldsymbol{u}|_{1,\mathrm{\Omega}}}\geq\gamma\|q\|_{0,\mathrm{\Omega}},\quad
\sup_{\boldsymbol{0}\neq\boldsymbol{J}\in\boldsymbol{E}}
\frac{C_{2}(\boldsymbol{J},\psi)}{\|\boldsymbol{J}\|_{0,\mathrm{\Omega}}}\geq\eta\|\psi\|_{0,\mathrm{\Omega}}\quad\text{for all }(q,\psi)\in Q\times\Psi,
\end{align*}
where $\gamma$ and $\eta$ are two positive constants only depending on $\mathrm{\Omega}$.
\end{prop}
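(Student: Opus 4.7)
The plan is to establish the two inf-sup conditions separately and by different constructions, since they concern different function spaces (an $H^1_0$-velocity paired with $L^2_0$-pressure, and an $H_0(\mathrm{div})$-current density paired with $L^2_0$-potential). In both cases the strategy is the standard one: given $q$ or $\psi$ in the scalar space, exhibit an explicit test vector whose divergence recovers the scalar and whose relevant norm is controlled by $\|q\|_{0,\Omega}$ or $\|\psi\|_{0,\Omega}$; plugging that test vector into the supremum then produces the lower bound.

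For the first estimate, I would invoke the classical continuous right inverse of the divergence from $L^2_0(\Omega)$ into $[H^1_0(\Omega)]^3$ (the Bogovskii / Ne\v{c}as lifting on bounded Lipschitz domains). Given $q\in Q$, this produces $\boldsymbol{u}_q\in\boldsymbol{U}$ with $\mathrm{div}\,\boldsymbol{u}_q=q$ and $|\boldsymbol{u}_q|_{1,\Omega}\le C_1\|q\|_{0,\Omega}$. Then $C_1(\boldsymbol{u}_q,q)=\|q\|_{0,\Omega}^{2}$, hence
\[
\sup_{\boldsymbol{0}\neq\boldsymbol{u}\in\boldsymbol{U}}\frac{C_{1}(\boldsymbol{u},q)}{|\boldsymbol{u}|_{1,\Omega}}\ge \frac{C_{1}(\boldsymbol{u}_{q},q)}{|\boldsymbol{u}_{q}|_{1,\Omega}}\ge \gamma\,\|q\|_{0,\Omega},\qquad \gamma=1/C_{1},
\]
which is the classical Stokes inf-sup condition.

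For the second estimate, given $\psi\in\Psi=L^{2}_{0}(\Omega)$ I would solve the Neumann Poisson problem $-\Delta w=\psi$ in $\Omega$ with $\partial_{\boldsymbol{n}}w=0$ on $\Gamma$; the zero-mean condition on $\psi$ makes this problem well posed modulo constants. Setting $\boldsymbol{K}_{\psi}:=-\nabla w$ gives $\boldsymbol{K}_{\psi}\in [L^{2}(\Omega)]^{3}$ with $\mathrm{div}\,\boldsymbol{K}_{\psi}=\psi\in L^{2}(\Omega)$ and $\boldsymbol{K}_{\psi}\cdot\boldsymbol{n}=0$ on $\Gamma$, so $\boldsymbol{K}_{\psi}\in\boldsymbol{E}$. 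Then $C_{2}(\boldsymbol{K}_{\psi},\psi)=\kappa\|\psi\|_{0,\Omega}^{2}$ and the Poincar\'e--Neumann estimate $\|\nabla w\|_{0,\Omega}\le C_{P}\|\psi\|_{0,\Omega}$ yields $\|\boldsymbol{K}_{\psi}\|_{0,\Omega}\le C_{P}\|\psi\|_{0,\Omega}$, so the ratio is bounded below by $\eta\|\psi\|_{0,\Omega}$ with $\eta=\kappa/C_{P}$.

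There is no substantive obstacle here: both bounds are textbook results once the right lifting operator is identified. The only subtlety worth checking is that the Lipschitz regularity of $\Omega$ is enough for the Bogovskii lifting (it is) and for the Neumann Poisson problem to admit a solution whose gradient is in $[L^2(\Omega)]^3$ with the stated continuous dependence (it is, via the Lax--Milgram theorem applied in $H^1(\Omega)/\mathbb{R}$ together with the Poincar\'e--Wirtinger inequality). The constants $\gamma$ and $\eta$ therefore depend only on $\Omega$ (and on $\kappa$ for $\eta$), as claimed in the statement.
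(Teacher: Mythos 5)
Your proposal is correct, and it supplies exactly the standard argument that the paper itself omits: the paper states this proposition without proof, deferring to the cited references, and those references establish the two inf-sup conditions precisely as you do, via the Bogovskii right inverse of the divergence for the $\boldsymbol{U}\times Q$ pair and the homogeneous Neumann Poisson problem for the $\boldsymbol{E}\times\Psi$ pair. Your side remark that $\eta=\kappa/C_{P}$ in fact carries a dependence on $\kappa$ (since the paper builds the factor $\kappa$ into the definition of $C_{2}$) is a fair observation about the wording of the statement, which claims dependence on $\mathrm{\Omega}$ only; this is harmless for everything downstream but worth noting.
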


\section{The virtual element method}
\subsection{Preliminaries and space definitions}
We decompose the domain $\mathrm{\Omega}$ into a sequence $\{\mathcal{T}_{h}\}_{h}$ consist of non-overlapping general polyhedron $K$ ($\partial K$ is the boundary of $K$, $f\in\partial K$ is a face of $K$, $\partial f$ is the boundary of $f$,  and $e$ is a edge of $K$) with $h:=\max\limits_{K\in\mathcal{T}_{h}}h_{K}$
and $h_{K}$ denotes the diameter of $K$. Let $K_{\text{v}}$, $K_{\text{e}}$, and $K_{\text{f}}$ respectively represent the number of vertices of $K$, the number of edges of $K$, and the number of faces of $K$. $|K|$ denote the measure of $K$ and $\boldsymbol{n}_{K}^{f}$ is the unit normal vector of $f$. Subsequently, we suppose each element $K$ satisfies the mesh regularity conditions for a uniform constant $\rho>0$ as follows:
\begin{itemize}
\item[\textbf{R1}]: each element $K$ is star-shaped with respect to a ball of radius $\geq\rho h_{K}$;
\item[\textbf{R2}]: each face $f$ of $K$ is star-shaped with respect to a disk of radius $\geq\rho h_{K}$;
\item[\textbf{R3}]: each edge $e$ of $K$ satisfies $\geq\rho h_{K}$.
\end{itemize}

Next, for $n\in\mathbb{N}$, we introduce some essential polynomial spaces as follows:
\begin{itemize}
\item[\textbf{S1}]: we define $\mathbb{P}_{n}(\mathrm{G})$ as a set of polynomials on $\mathrm{G}\subset\mathbb{R}^{d}$ of degree less than or equal to $n$ and respectively denote $[\mathbb{P}_{n}(\mathrm{G})]^d$ and $[\mathbb{P}_{n}(\mathrm{G})]^{d\times d}$ as vectorial and matrix polynomials ($\mathbb{P}_{-1}(\mathrm{G})=\{0\}$, $G:=f$ with $d=2$, and $G:=K$ with $d=3$);
\item[\textbf{S2}]: a set scaled monomial basis of $\mathbb{P}_{n}(f)$ is defined as
\begin{equation*}
\mathbb{M}_{n}(f):=\Big\{m_{\boldsymbol{\beta}}=\big(\frac{x-x_{K}}{h_{K}}\big)^{\beta_{1}}\big(\frac{y-y_{K}}{h_{K}}\big)^{\beta_{2}},~\text{with}~
|\boldsymbol{\beta}|\leq n,\Big\}
\end{equation*}
where the multi-index $\boldsymbol{\beta}=(\beta_{1},\beta_{2})\in\mathbb{N}^{2}$ with
$|\boldsymbol{\beta}|=\beta_{1}+\beta_{2}$ and $(x_{f},y_{f})$ denotes the centroid of $f$;
\item[\textbf{S3}]: a set scaled monomial basis of $\mathbb{P}_{n}(K)$ is defined as
\begin{equation*}
\mathbb{M}_{n}(K):=\Big\{m_{\boldsymbol{\alpha}}=\big(\frac{x-x_{K}}{h_{K}}\big)^{\alpha_{1}}\big(\frac{y-y_{K}}{h_{K}}\big)^{\alpha_{2}}\big(\frac{z-z_{K}}{h_{K}}\big)^{\alpha_{3}},~\text{with}~
|\boldsymbol{\alpha}|\leq n,\Big\}
\end{equation*}
where the multi-index $\boldsymbol{\alpha}=(\alpha_{1},\alpha_{2},\alpha_{3})\in\mathbb{N}^{3}$ with
$|\boldsymbol{\alpha}|=\alpha_{1}+\alpha_{2}+\alpha_{3}$ and $(x_{K},y_{K},z_{K})$ denotes the centroid of $K$;
\item[\textbf{S4}]: $\mathbb{\hat{P}}_{n\setminus m}(f):=\text{span}(m_{\boldsymbol{\beta}}:m+1\leq|\boldsymbol{\beta}|\leq n)$  and $\mathbb{\hat{P}}_{n\setminus m}(K):=\text{span}(m_{\boldsymbol{\alpha}}:m+1\leq|\boldsymbol{\alpha}|\leq n)$ for any $m\leq n$;
\item[\textbf{S5}]: $\mathcal{B}_{n}(\partial K):=\{v\in   C^{0}(\partial K)$ such that $v|_{f}\in\mathcal{B}_{n}(f)$     for all $f\in\partial K\}$ with $\mathcal{B}_{n}(f):=\{v\in H^{1}(f)$ such that $v$ satisfies

(a) $v|_{\partial f}\in C^{0}(\partial f):v|_{e}\in\mathbb{P}_{k}(e)$, for all $e\in\partial f\}$,

(b) $\Delta_{f}v\in\mathbb{P}_{n+1}(f)$,

(c) $\int_{f}v\hat{m}_{n+1}-\mathrm{\Pi}_{n}^{\nabla,f}v\hat{m}_{n+1}\text{d}f=0$ for all $\hat{m}_{n+1}\in\mathbb{\hat{P}}_{n\setminus m}(f)\}$.
\end{itemize}
A detailed description of the above definitions can be found in \cite{beirao2020stokes,da2016h,da2018family}.
Finally, for $n\in\mathbb{N}$, we define some projection operators as follows:
\begin{itemize}
\item[$\bigstar$]: the $\nabla$-projection $\mathrm{\Pi}_{n}^{\nabla,f}:[H^{1}(f)]^{3}\rightarrow[\mathbb{P}_{n}(f)]^{3}$, defined for all $\boldsymbol{v}\in[H^{1}(f)]^{3}$ by
\begin{equation}
\label{Pro-nabla-face}
\left\{
\begin{aligned}
\int_{f}\nabla\mathrm{\Pi}_{n}^{\nabla,f}\boldsymbol{v}:\nabla\boldsymbol{q}_{n}\mathrm{d}f &=\int_{f}\nabla\boldsymbol{v}: \nabla\boldsymbol{q}_{n}\mathrm{d}f \quad \text{for all }\boldsymbol{q}_{n}\in[\mathbb{P}_{n}(f)]^3,\\
\int_{\partial f}\mathrm{\Pi}_{n}^{\nabla,f}\boldsymbol{v}\cdot\boldsymbol{q}_{0}\mathrm{d}f  &=\int_{\partial f}\boldsymbol{v}\cdot\boldsymbol{q}_{0}\mathrm{d}f \quad \text{for all }\boldsymbol{q}_{0}\in[\mathbb{P}_{0}(f)]^3;
\end{aligned}
\right.
\end{equation}
\item[$\bigstar$]: the $L^2$-projection $\mathrm{\Pi}_{n}^{0,f}:[L^{2}(f)]^{3}\rightarrow[\mathbb{P}_{n}(f)]^{3}$, defined for all $\boldsymbol{v}\in[L^2(f)]^3$ by
\begin{equation}
\label{Pro-L2-face}
\int_{f}\mathrm{\Pi}_{n}^{0,f}\boldsymbol{v}\cdot\boldsymbol{q}_{n}\mathrm{d}f =\int_{f}\boldsymbol{v}\cdot\boldsymbol{q}_{n}\mathrm{d}f \quad\text{for all }\boldsymbol{q}_{n}\in[\mathbb{P}_{n}(f)]^3;
\end{equation}
\item[$\bigstar$]: the $\nabla$-projection $\mathrm{\Pi}_{n}^{\nabla,K}:[H^{1}(K)]^{3}\rightarrow[\mathbb{P}_{n}(K)]^{3}$, defined for all $\boldsymbol{v}\in[H^{1}(K)]^{3}$ by
\begin{equation}
\label{Pro-nabla-element}
\left\{
\begin{aligned}
\int_{K}\nabla\mathrm{\Pi}_{n}^{\nabla,K}\boldsymbol{v}:\nabla\boldsymbol{q}_{n}\mathrm{d}K &=\int_{K}\nabla\boldsymbol{v}: \nabla\boldsymbol{q}_{n}\mathrm{d}K \quad \text{for all }\boldsymbol{q}_{n}\in[\mathbb{P}_{n}(K)]^3,\\
\int_{\partial K}\mathrm{\Pi}_{n}^{\nabla,K}\boldsymbol{v}\cdot\boldsymbol{q}_{0}\mathrm{d}K  &=\int_{\partial K}\boldsymbol{v}\cdot\boldsymbol{q}_{0}\mathrm{d}K \quad \text{for all }\boldsymbol{q}_{0}\in[\mathbb{P}_{0}(K)]^3;
\end{aligned}
\right.
\end{equation}
\item[$\bigstar$]: the $L^2$-projection $\mathrm{\Pi}_{n}^{0,K}:[L^{2}(K)]^{3}\rightarrow[\mathbb{P}_{n}(K)]^{3}$, defined for all $\boldsymbol{v}\in[L^2(K)]^3$ by
\begin{equation}
\label{Pro-L2-element}
\int_{K}\mathrm{\Pi}_{n}^{0,K}\boldsymbol{v}\cdot\boldsymbol{q}_{n}\mathrm{d}K =\int_{K}\boldsymbol{v}\cdot\boldsymbol{q}_{n}\mathrm{d}K \quad\text{for all }\boldsymbol{q}_{n}\in[\mathbb{P}_{n}(K)]^3;
\end{equation}
\item[$\bigstar$]: the $\boldsymbol{L}^2$-projection $\boldsymbol{\mathrm{\Pi}}_{n}^{0,K}:[L^{2}(K)]^{3\times3}\rightarrow[\mathbb{P}_{n}(K)]^{3\times3}$, defined for all $\boldsymbol{V}\in[L^2(K)]^{3\times3}$ by
\begin{equation}
\label{Pro-grad}
\int_{K}\boldsymbol{\mathrm{\Pi}}_{n}^{0,K}\boldsymbol{V}:\boldsymbol{Q}_{n}\mathrm{d}K =\int_{K}\boldsymbol{V}:\boldsymbol{Q}_{n}\mathrm{d}K \quad\text{for all }\boldsymbol{Q}_{n}\in[\mathbb{P}_{n}(K)]^{3\times3}.
\end{equation}
\end{itemize}
\subsection{Virtual element spaces for the velocity and the pressure}
In this section, we consider the (enlarged) finite dimensional spaces for the velocity and the pressure.
Let $k_{u}\geq2$, we recall from Ref.\cite{beirao2020stokes} the local "enhanced" virtual element space:
\begin{equation*}
\boldsymbol{U}_{h}^{K}:=\Bigg\{\boldsymbol{v}_{h}\in\boldsymbol{V}_{h}^{K}~\text{s.t.}~\int_{K}\mathrm{\Pi}_{k_{u}}^{\nabla,K}\boldsymbol{v}_{h}\cdot(\boldsymbol{x}\wedge\boldsymbol{\hat{q}}_{k_{u}-1})\mathrm{d}K=\int_{K}\boldsymbol{v}_{h}\cdot(\boldsymbol{x}\wedge\boldsymbol{\hat{q}}_{k_{u}-1})\mathrm{d}K
~\text{for all }\boldsymbol{\hat{q}}_{k_{u}-1}\in[\hat{\mathbb{P}}_{k_{u}-1\setminus k_{u}-3}(K)]^{3}\Bigg\},
\end{equation*}
the enlarged virtual element space $\boldsymbol{V}_{h}^{K}$ defined by
\begin{align*}
\boldsymbol{V}_{h}^{K}:=\Bigg\{\boldsymbol{v}_{h}\in[H^{1}(K)]^3~\text{s.t}~\boldsymbol{v}_{h}|_{\partial K}\in[\mathcal{B}_{k}(\partial K)]^3,~\text{div}\boldsymbol{v}_{h}\in\mathbb{P}_{k-1}(K),~
\\ \Delta\boldsymbol{v}+\nabla p \in\boldsymbol{x}\wedge[\mathbb{P}_{k-3}(K)]^{3}~\text{for some } s\in L_{0}^{2}(K)
\Bigg\},
\end{align*}
and the main properties for a given $\boldsymbol{v}_{h}\in\boldsymbol{U}_{h}^{K}$:
\begin{itemize}
\item[\textbf{P1}]: the local degrees of freedom:
\begin{align*}
\textbf{D}^{\text{v}}_K(\boldsymbol{v}_{h})&:\text{the value of}~\boldsymbol{v}_{h}~\text{at the vertexes of}~K,\\
\textbf{D}^{\text{e}}_K(\boldsymbol{v}_{h})&:\text{the values of}~\boldsymbol{v}_{h} ~\text{at}~k_{u}-1~\text{distinct points of every edge}~ e~\text{of}~K,\\
\textbf{D}^{f}_K(\boldsymbol{v}_{h})&:\text{the face moments of }\boldsymbol{v}_{h}(\text{split into normal and tangential components})~\text{in}~f:\\
&\int_{f}(\boldsymbol{v}_{h}\cdot\boldsymbol{n}_{K}^{f})q_{k_{u}-2}\mathrm{d}f,\quad \int_{f}\boldsymbol{v}_{h,\tau}\cdot\boldsymbol{q}_{k_{u}-2}\mathrm{d}f
\quad\text{for all } q_{k_{u}-2}\in\mathbb{P}_{k_{u}-2}(f)~\text{and}~\boldsymbol{q}_{k_{u}-2}\in[\mathbb{P}_{k_{u}-2}(f)]^{2},\\
\textbf{D}^{\wedge}_{K}(\boldsymbol{v}_{h})&:\text{the volume moments of }~\boldsymbol{v}_{h}~\text{in}~K:\\&
\int_{K}\boldsymbol{v}_{h}\cdot(\boldsymbol{x}\wedge\boldsymbol{q}_{k_{u}-3})\mathrm{d}K\quad\text{for all }\boldsymbol{q}_{k_{u}-3}\in[\mathbb{P}_{k_{u}-3}(K)]^{3},\\
\textbf{D}^{\text{div}}_K(\boldsymbol{v}_{h})&:\text{the volume moments of }~\text{div}\boldsymbol{v}_{h}~\text{in}~K:\\&\int_{K}(\text{div}\boldsymbol{v}_{h})\hat{q}_{k_{u}-1}\mathrm{d}K\quad\text{for all }\hat{q}_{k_{u}-1}\in\mathbb{\hat{P}}_{k_{u}-1\setminus 0}(K),
\end{align*}
\item[\textbf{P2}]: the amount of local degrees of freedom defined in \textbf{P1} is the dimension of $\boldsymbol{U}_{h}^{K}$, which is given by
\begin{equation*}
\text{Dofs}(\boldsymbol{u})=\text{dim}(\boldsymbol{U}_{h}^{K})=3K_{v}k_{u}+3K_{e}(k_{u}-1)+3K_{f}\frac{(k_{u}-1)k_{u}}{2}+3\frac{(k_{u}-1)k_{u} (k_{u}+1)}{6},
\end{equation*}
\item[\textbf{P3}]: the polynomial space $[\mathbb{P}_{k}(K)]^{3}\subseteq\boldsymbol{U}_{h}^{K}$;
\item[\textbf{P4}]: the local degrees of freedom defined in \textbf{P1} allow us to compute exactly the following polynomial projections \cite{beirao2020stokes}:
\begin{eqnarray*}
\begin{array}{lll}
   &\mathrm{\Pi}_{k_{u}}^{\nabla,f}:[\mathcal{B}_{k_{u}}( f)]^3\rightarrow[\mathbb{P}_{k_{u}}(f)]^{3},\quad
    &\mathrm{\Pi}_{k_{u}+1}^{0,f}:[\mathcal{B}_{k_{u}}( f)]^3\rightarrow[\mathbb{P}_{k_{u}+1}(K)]^{3},\vspace{10pt}\\
    &\mathrm{\Pi}_{k_{u}}^{\nabla,K}:\boldsymbol{U}_{h}^{K}\rightarrow[\mathbb{P}_{k_{u}}(K)]^{3},\quad
    &\mathrm{\Pi}_{k_{u}}^{0,K}:\boldsymbol{U}_{h}^{K}\rightarrow[\mathbb{P}_{k_{u}}(K)]^{3},\quad
    \boldsymbol{\mathrm{\Pi}}_{k_{u}-1}^{0,K}:\nabla(\boldsymbol{U}_{h}^{K})\rightarrow[\mathbb{P}_{k_{u}-1}(K)]^{3\times3},
\end{array}
\end{eqnarray*}
\end{itemize}

Based on above local virtual spaces, we define the global virtual element space:
\begin{equation}
\label{discrete:Uh}
\boldsymbol{U}_{h}:=\{\boldsymbol{v}_{h}\in\boldsymbol{H}^{1}(\mathrm{\Omega})\quad\text{s.t.}\quad\boldsymbol{v}_{h}|_K\in\boldsymbol{U}_{h}^{K}\quad\text{for all }K\in\mathcal{T}_{h}\},
\end{equation}
and then we define a global space as
\begin{equation}
\label{discrete:Qh}
Q_{h}:=\{q\in L_{0}^{2}(\mathrm{\Omega})\quad\text{s.t.}\quad q|_{K}\in \mathbb{P}_{k_{u}-1}(K)\quad\text{for all }K\in\mathcal{T}_{h}\}.
\end{equation}
The definitions of the discrete pressure space (\ref{discrete:Qh}) and the $\boldsymbol{H}^{1}$-conforming velocity space (\ref{discrete:Uh}) imply that
\begin{equation}
\label{Div-u}
\text{div}\boldsymbol{U}_{h}\subseteq Q_{h},
\end{equation}
this key property that will lead to a divergence-free discrete velocity.
\subsection{Virtual element spaces for the current density and the electric potential}
In this section, we consider the  finite dimensional spaces for the current density and the electric potential.
Let $k_{J}\geq0$, we define the local virtual space:
\begin{equation*}
\begin{aligned}
\boldsymbol{E}_{h}^{K}:=\{\boldsymbol{v}_{h}\in\boldsymbol{H}(\textup{div};K)\cap\boldsymbol{H}(\textup{rot};K)
\;&\text{such that} \;\boldsymbol{v}_{h}\cdot\boldsymbol{n}_{K}^{f}\in\mathbb{P}_{k_{J}}(f)~ \text{for all } f~\text{of}~ K,\\&\text{div}\boldsymbol{v}_{h}\in\mathbb{P}_{k_{J}}(K) \;\text{and}\;\textbf{curl}\boldsymbol{v}_{h}\in[\mathbb{P}_{k_{J}-1}(K)]^{3}\},
\end{aligned}
\end{equation*}
The definition above is the three dimensional counterpart of the virtual elements in \cite{caceres2017mixed}, it follow in a very natural way the path of the two dimensional companions. Moreover, notice that the space $\boldsymbol{E}_{h}^{K}$ contain $[\mathbb{P}_{k_{J}}(K)]^{3}$ and this will guarantee the good approximation property of the space.
\begin{prop}
The dimension of $\boldsymbol{E}_{h}^{K}$ is given by
\begin{equation*}
\text{dim}(\boldsymbol{E}_{h}^{K})=\text{Dofs}(\boldsymbol{u})=K_{f}\frac{(k+1)(k+2)}{2}+4\frac{(k+1)(k+2)(k+3)}{6}-\frac{(k+2)(k+3)(k+4)}{6}.
\end{equation*}
Moreover, for a given $\boldsymbol{v}_{h}\in\boldsymbol{E}_{h}^{K}$, the local degrees of freedom as follows:
\begin{align*}
\textbf{D}^{\boldsymbol{n}}_K(\boldsymbol{v}_{h})&:\text{the face moments of }\boldsymbol{v}_{h}~\text{in}~f:\\
&\int_{f}(\boldsymbol{v}_{h}\cdot\boldsymbol{n}_{K}^{f})q_{k_{J}}\mathrm{d}f,
\quad\text{for all } q_{k_{J}}\in\mathbb{P}_{k_{J}}(f),\\
\textbf{D}^{\nabla}_{K}(\boldsymbol{v}_{h})&:\text{the volume moments of }~\boldsymbol{v}_{h}~\text{in}~K:\\&
\int_{K}\boldsymbol{v}_{h}\cdot\nabla \hat{q}_{k_{J}}\mathrm{d}K\quad\text{for all }\hat{q}_{k_{J}}\in\mathbb{\hat{P}}_{k_{J}\setminus0}(K),\\
\textbf{D}^{\bot}_K(\boldsymbol{v}_{h})&:\text{the volume moments of }~\boldsymbol{v}_{h}~\text{in}~K:\\&\int_{K}\boldsymbol{v}_{h}\cdot\boldsymbol{q}_{k_{J}}^{\bot}\mathrm{d}K\quad\text{for all }\boldsymbol{q}_{k_{J}}^{\bot}\in\boldsymbol{\mathcal{G}}_{k_{J}}^{\bot}(K),
\end{align*}
where $\boldsymbol{\mathcal{G}}_{k_{J}}^{\bot}(K)$
is a basis of $(\nabla\mathbb{P}_{k_{J}+1}(K))^{\bot}\cap[\mathbb{P}_{k_{J}+1}]^{3}$, which is the $[L^{2}(K)]^{3}$-orthogonal of $\nabla\mathbb{P}_{k_{J}+1}(K)$ in $[\mathbb{P}_{k_{J}+1}]^{3}$.
\end{prop}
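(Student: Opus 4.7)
The plan is to establish the two assertions of the proposition---dimension formula and unisolvence of the listed functionals---in sequence.

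Dimension count. Summing the cardinalities of the three DOF families gives $K_f\dim\mathbb{P}_{k_J}(f) + (\dim\mathbb{P}_{k_J}(K)-1) + \dim\boldsymbol{\mathcal{G}}_{k_J}^\bot(K)$. Using the $L^2(K)$-orthogonal decomposition $[\mathbb{P}_{k_J}(K)]^3 = \nabla\mathbb{P}_{k_J+1}(K) \oplus \boldsymbol{\mathcal{G}}_{k_J}^\bot(K)$, which yields $\dim\boldsymbol{\mathcal{G}}_{k_J}^\bot(K) = 3\dim\mathbb{P}_{k_J}(K) - \dim\mathbb{P}_{k_J+1}(K) + 1$, and inserting the closed-form expressions $\dim\mathbb{P}_n(K) = (n+1)(n+2)(n+3)/6$ and $\dim\mathbb{P}_n(f)=(n+1)(n+2)/2$, a direct simplification reproduces the displayed formula.

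Unisolvence. I would take $\boldsymbol{v}_h\in\boldsymbol{E}_h^K$ with all DOFs zero and prove $\boldsymbol{v}_h=\boldsymbol{0}$ in three stages. \textbf{(1)} Since $\boldsymbol{v}_h\cdot\boldsymbol{n}_K^f\in\mathbb{P}_{k_J}(f)$ and its $L^2(f)$-moments against $\mathbb{P}_{k_J}(f)$ vanish, $\boldsymbol{v}_h\cdot\boldsymbol{n}_K^f=0$ on every face of $K$. \textbf{(2)} For any $q\in\mathbb{P}_{k_J}(K)$, split $q=c+\hat q$ with $c\in\mathbb{R}$ and $\hat q\in\hat{\mathbb{P}}_{k_J\setminus 0}(K)$; integration by parts gives
\begin{equation*}
\int_K \operatorname{div}\boldsymbol{v}_h\, q\,dK = \int_{\partial K}(\boldsymbol{v}_h\cdot\boldsymbol{n})q\,df - \int_K \boldsymbol{v}_h\cdot\nabla\hat q\,dK,
\end{equation*}
in which the boundary term vanishes by \textbf{(1)} and the volume term by $\textbf{D}^{\nabla}_K$. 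Since $\operatorname{div}\boldsymbol{v}_h\in\mathbb{P}_{k_J}(K)$ is then $L^2$-orthogonal to $\mathbb{P}_{k_J}(K)$, $\operatorname{div}\boldsymbol{v}_h = 0$. \textbf{(3)} For arbitrary $\boldsymbol{p}\in[\mathbb{P}_{k_J}(K)]^3$, split $\boldsymbol{p}=\nabla\hat q + \boldsymbol{p}^\bot$ via the polynomial decomposition above; then $\int_K\boldsymbol{v}_h\cdot\nabla\hat q\,dK$ vanishes by a second integration by parts using \textbf{(1)} and \textbf{(2)}, while $\int_K\boldsymbol{v}_h\cdot\boldsymbol{p}^\bot\,dK$ vanishes by $\textbf{D}^{\bot}_K$, so $\boldsymbol{v}_h$ is $L^2$-orthogonal to the whole space $[\mathbb{P}_{k_J}(K)]^3$.

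This polynomial orthogonality combined with $\operatorname{curl}\boldsymbol{v}_h\in[\mathbb{P}_{k_J-1}(K)]^3$ will then be used through a further integration by parts to force $\operatorname{curl}\boldsymbol{v}_h=0$; together with $\operatorname{div}\boldsymbol{v}_h=0$ and $\boldsymbol{v}_h\cdot\boldsymbol{n}=0$ on the simply connected, star-shaped polyhedron $K$, a scalar harmonic potential with vanishing Neumann trace must be constant, forcing $\boldsymbol{v}_h=\boldsymbol{0}$. The principal technical obstacle lies in this final curl-vanishing step: transferring the $L^2$-polynomial orthogonality of the virtual (non-polynomial) function $\boldsymbol{v}_h$ into the vanishing of $\operatorname{curl}\boldsymbol{v}_h$ requires a three-dimensional Helmholtz-type decomposition and a careful tangential-trace argument on a general polyhedron, which is the natural 3D extension of the two-dimensional construction of \cite{caceres2017mixed} cited in the paper.
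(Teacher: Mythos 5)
Your dimension count is exactly the paper's: it too sums the cardinalities of the three families, taking $\dim\boldsymbol{\mathcal{G}}_{k_{J}}^{\bot}(K)=3\dim\mathbb{P}_{k_{J}}(K)-\dim\mathbb{P}_{k_{J}+1}(K)+1$ together with the standard formulas for $\dim\mathbb{P}_{n}(f)$ and $\dim\mathbb{P}_{n}(K)$ (your reading of $\boldsymbol{\mathcal{G}}_{k_{J}}^{\bot}(K)$ as the complement of $\nabla\mathbb{P}_{k_{J}+1}(K)$ inside $[\mathbb{P}_{k_{J}}(K)]^{3}$, rather than $[\mathbb{P}_{k_{J}+1}(K)]^{3}$ as literally written, is the one consistent with the paper's arithmetic). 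On unisolvence the comparison is lopsided in your favour: the paper only asserts that ``it is not difficult to prove'' the functionals are unisolvent and points to the two-dimensional treatment in the cited reference, whereas you actually carry out the first three stages --- vanishing normal trace, vanishing divergence, and $L^{2}$-orthogonality of $\boldsymbol{v}_{h}$ to $[\mathbb{P}_{k_{J}}(K)]^{3}$ --- and all three are correct.

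The one place your sketch would not survive being written out is the closing step as you phrase it. Testing $\operatorname{\mathbf{curl}}\boldsymbol{v}_{h}$ against a polynomial and integrating by parts produces the boundary term $\int_{\partial K}(\boldsymbol{v}_{h}\times\boldsymbol{n})\cdot\boldsymbol{q}\,\mathrm{d}f$, and the tangential trace of $\boldsymbol{v}_{h}$ is constrained by nothing in the space or in the degrees of freedom, so you cannot ``force $\operatorname{curl}\boldsymbol{v}_{h}=0$'' this way. The standard route (the 3D analogue of the 2D stream-function argument) avoids the intermediate claim entirely: from $\operatorname{div}\boldsymbol{v}_{h}=0$ and $\boldsymbol{v}_{h}\cdot\boldsymbol{n}=0$ on the star-shaped $K$ one writes $\boldsymbol{v}_{h}=\operatorname{\mathbf{curl}}\boldsymbol{\psi}$ with a vector potential whose \emph{tangential} trace vanishes, so that $\int_{K}|\boldsymbol{v}_{h}|^{2}=\int_{K}\boldsymbol{\psi}\cdot\operatorname{\mathbf{curl}}\boldsymbol{v}_{h}$ with no boundary contribution; since $\operatorname{\mathbf{curl}}\boldsymbol{v}_{h}\in[\mathbb{P}_{k_{J}-1}(K)]^{3}$ is divergence-free it equals $\operatorname{\mathbf{curl}}\boldsymbol{p}$ for some $\boldsymbol{p}\in[\mathbb{P}_{k_{J}}(K)]^{3}$ by exactness of the polynomial complex on a star-shaped element, and undoing the integration by parts turns the right-hand side into $\int_{K}\boldsymbol{v}_{h}\cdot\boldsymbol{p}$, which is zero by your stage (3). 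This yields $\boldsymbol{v}_{h}=\boldsymbol{0}$ directly. You correctly identify the tangential trace as the obstacle, so this is a repair of your last step rather than a refutation; everything before it stands, and it is already considerably more than the paper provides.
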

\begin{proof}
Here, we only sketch the proof since it follows the guidelines of Sections 2.3-3.2 in \cite{da2016h} for the analogous 2D space. Then, we recall the definitions of dimension of polynomial spaces $\mathbb{P}(\mathrm{G})$ with $G:=f,K$ and the cardinality of $\boldsymbol{\mathcal{G}}_{k_{J}}^{\bot}(K)$ is $3(k+1)(k+2)(k+3)/6-(k+2)(k+3)(k+4)/6+1$ and realize the cardinalities of $\mathbb{P}_{k_{J}}(f)$ and $\mathbb{\hat{P}}_{k_{J}\setminus0}(K)$, and then by simple computations yields
\begin{align}
\text{Dofs}(\boldsymbol{u})=K_{f}\frac{(k+1)(k+2)}{2}+4\frac{(k+1)(k+2)(k+3)}{6}-\frac{(k+2)(k+3)(k+4)}{6}.
\end{align}
Moreover, it is not difficult to prove that for each $K\in\mathcal{T}_{h}$ these local degrees of freedom $\text{Dofs}(\boldsymbol{u})$ are unisolvent in $\boldsymbol{E}_{h}^{K}$.
\end{proof}
\begin{prop}
The local degrees of freedom defined in $\textbf{D}^{i}_{K}~(i:=\boldsymbol{n},\nabla,\bot)$ allow us to compute exactly the following polynomial projection:
\begin{equation*}
\mathrm{\Pi}_{k_{J}}^{0,K}:\boldsymbol{E}_{h}^{K}\rightarrow[\mathbb{P}_{k_{J}}(K)]^{3}.
\end{equation*}
\end{prop}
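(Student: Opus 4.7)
The plan is to exploit the defining identity \eqref{Pro-L2-element}: it suffices to evaluate $\int_K\boldsymbol{v}_h\cdot\boldsymbol{p}\,\mathrm{d}K$ for every test polynomial $\boldsymbol{p}\in[\mathbb{P}_{k_J}(K)]^3$ from the three DoF families $\textbf{D}_K^{\boldsymbol{n}},\textbf{D}_K^{\nabla},\textbf{D}_K^{\bot}$, since these moments determine $\mathrm{\Pi}_{k_J}^{0,K}\boldsymbol{v}_h$ uniquely as an element of $[\mathbb{P}_{k_J}(K)]^3$. The strategy follows the 2D analogue in Ref.~\cite{da2016h}. I would first use the $L^2$-orthogonal splitting that underlies the definition of $\boldsymbol{\mathcal{G}}_{k_J}^{\bot}(K)$, namely $[\mathbb{P}_{k_J+1}(K)]^3=\nabla\mathbb{P}_{k_J+1}(K)\oplus\boldsymbol{\mathcal{G}}_{k_J}^{\bot}(K)$, together with the inclusion $[\mathbb{P}_{k_J}(K)]^3\subset[\mathbb{P}_{k_J+1}(K)]^3$, to write $\boldsymbol{p}=\nabla q+\boldsymbol{p}^{\bot}$ with $q=c+\hat{q}$, $c\in\mathbb{R}$, $\hat{q}\in\hat{\mathbb{P}}_{k_J+1\setminus 0}(K)$, and $\boldsymbol{p}^{\bot}\in\boldsymbol{\mathcal{G}}_{k_J}^{\bot}(K)$. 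The term $\int_K\boldsymbol{v}_h\cdot\boldsymbol{p}^{\bot}\,\mathrm{d}K$ is then precisely a $\textbf{D}_K^{\bot}$ DoF, and only $\int_K\boldsymbol{v}_h\cdot\nabla\hat{q}\,\mathrm{d}K$ remains to be controlled.

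For this remaining integral I would integrate by parts:
\[
\int_K\boldsymbol{v}_h\cdot\nabla\hat{q}\,\mathrm{d}K=-\int_K(\text{div}\,\boldsymbol{v}_h)\hat{q}\,\mathrm{d}K+\sum_{f\in\partial K}\int_f(\boldsymbol{v}_h\cdot\boldsymbol{n}_K^f)\hat{q}\,\mathrm{d}f.
\]
Each face contribution is computable because $\boldsymbol{v}_h\cdot\boldsymbol{n}_K^f\in\mathbb{P}_{k_J}(f)$ is fully recovered from $\textbf{D}_K^{\boldsymbol{n}}$, leaving an inner product of two known polynomials on $f$. For the volume term I would reconstruct $\text{div}\,\boldsymbol{v}_h\in\mathbb{P}_{k_J}(K)$ from the DoFs by computing its moments against a basis of $\mathbb{P}_{k_J}(K)$: the constant mode reduces to $\int_{\partial K}\boldsymbol{v}_h\cdot\boldsymbol{n}\,\mathrm{d}S$ via the divergence theorem (face DoFs only), and for $\tilde{q}\in\hat{\mathbb{P}}_{k_J\setminus 0}(K)$ a further integration by parts yields
\[
\int_K(\text{div}\,\boldsymbol{v}_h)\tilde{q}\,\mathrm{d}K=-\int_K\boldsymbol{v}_h\cdot\nabla\tilde{q}\,\mathrm{d}K+\sum_{f\in\partial K}\int_f(\boldsymbol{v}_h\cdot\boldsymbol{n}_K^f)\tilde{q}\,\mathrm{d}f,
\]
in which the interior moment is exactly a $\textbf{D}_K^{\nabla}$ DoF (note that $\nabla\tilde{q}$ has degree at most $k_J-1$, well within range) and the boundary contribution is again captured by $\textbf{D}_K^{\boldsymbol{n}}$. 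These moments determine $\text{div}\,\boldsymbol{v}_h$ uniquely as a polynomial in $\mathbb{P}_{k_J}(K)$, after which $\int_K(\text{div}\,\boldsymbol{v}_h)\hat{q}\,\mathrm{d}K$ becomes a routine integral of two explicit polynomials over $K$.

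The principal obstacle I anticipate is the degree mismatch: the decomposition produces $\nabla\hat{q}$ with $\hat{q}$ potentially of degree $k_J+1$, whereas $\textbf{D}_K^{\nabla}$ pairs $\boldsymbol{v}_h$ directly only with gradients of polynomials of degree at most $k_J$. The two-stage integration-by-parts manoeuvre above, combined with the fact that $\text{div}\,\boldsymbol{v}_h$ already lives in $\mathbb{P}_{k_J}(K)$ by the very definition of $\boldsymbol{E}_h^K$, is precisely the device that bridges this gap; the rest of the argument is bookkeeping on polynomial degrees together with the unisolvence of the $\mathbb{P}_{k_J}(K)$-moments established in the preceding Proposition.
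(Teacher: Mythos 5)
Your proposal is correct and follows essentially the same route as the paper's proof: decompose the test polynomial into a gradient part plus an element of $\boldsymbol{\mathcal{G}}_{k_J}^{\bot}(K)$, integrate the gradient part by parts, and use the computability of $\mathrm{div}\,\boldsymbol{v}_h\in\mathbb{P}_{k_J}(K)$ from $\textbf{D}_K^{\boldsymbol{n}}$ and $\textbf{D}_K^{\nabla}$ together with the polynomial normal traces on the faces. The degree-mismatch point you flag is handled in the paper by exactly the same device, namely reconstructing $\mathrm{div}\,\boldsymbol{v}_h$ first and then pairing it with $\tilde q$ as two known polynomials.
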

\begin{proof}
According to the definition of  $L^2$-projection $\mathrm{\Pi}_{n}^{0,K}$ in (\ref{Pro-L2-element}), we define
$\mathrm{\Pi}_{u_{J}}^{0,K}:\boldsymbol{E}_{h}^{K}\rightarrow[\mathbb{P}_{k_{J}}(K)]^{3}$ for all $\boldsymbol{v}_{h}\in\boldsymbol{E}_{h}^{K}$ by
\begin{equation}
\label{Current-density-L2-element}
\int_{K}\mathrm{\Pi}_{k_{J}}^{0,K}\boldsymbol{v}_{h}\cdot\boldsymbol{q}_{k_{J}}\mathrm{d}K =\int_{K}\boldsymbol{v}_{h}\cdot\boldsymbol{q}_{k_{J}}\mathrm{d}K \quad\text{for all }\boldsymbol{q}_{k_{J}}\in[\mathbb{P}_{k_{J}}(K)]^3;
\end{equation}
Obviously, we can observe that the left-hand side of (\ref{Current-density-L2-element}) is easy to calculate precisely by the $\mathrm{\Pi}_{u_{J}}^{0,K}\boldsymbol{v}_{h}\in[\mathbb{P}_{k_{J}}(K)]^3$.
Indeed, it suffices to check that the right-hand (\ref{Current-density-L2-element}) is calculable in this case. To do that, we
first mention that the degrees of freedom $\textbf{D}^{\boldsymbol{n}}_K(\boldsymbol{v}_{h})$ and $\textbf{D}^{\nabla}_{K}(\boldsymbol{v}_{h})$ imply the $\text{div}\boldsymbol{v}_{h}\in \mathbb{P}_{k_{J}}(K)$ is computable by using the identity
\begin{equation}
\label{Div-J-computable}
\int_{K}\text{div}\boldsymbol{v}_{h}q\mathrm{d}K = -\int_{K}\boldsymbol{v}_{h}\nabla q\mathrm{d}K + \int_{\partial\mathrm{\Omega}}\boldsymbol{v}_{h}\cdot\boldsymbol{n}q\mathrm{d}s \quad\text{for~all~}q\in \mathbb{P}_{k_{J}}(K).
\end{equation}
Next, given $\boldsymbol{q}\in[\mathbb{P}_{k_{J}}(K)]^3$, we know that there exist unique $\boldsymbol{q}^{\perp}\in(\nabla\mathbb{P}_{k_{J}+1}(K))^{\bot}\cap[\mathbb{P}_{k_{J}+1}]^{3}$ and $\tilde{q}\in\mathbb{P}_{k_{J}+1}(K)$, such that $\boldsymbol{q}= \boldsymbol{q}^{\perp} + \nabla\tilde{q}$. In this way, it follows that
\begin{equation*}
\int_{K}\boldsymbol{v}_{h}\boldsymbol{q}\mathrm{d}K =\int_{K} \boldsymbol{v}_{h}\cdot\boldsymbol{q}^{\perp}\mathrm{d}K
+\int_{K} \boldsymbol{v}_{h}\cdot\nabla\tilde{q}\mathrm{d}K  = \int_{K} \boldsymbol{v}_{h}\cdot\boldsymbol{q}^{\perp}\mathrm{d}K  -\int_{K} \tilde{q}\text{div}\boldsymbol{v}_{h}\mathrm{d}K +\sum_{f\in\partial K}\int_{f} \boldsymbol{v}_{h}\cdot\boldsymbol{n}_{K}^{f}\tilde{q}\mathrm{d}K,
\end{equation*}
which, according to (\ref{Div-J-computable}), $\boldsymbol{v}_{h}\cdot\boldsymbol{n}_{K}^{f}\in\mathbb{P}_{k_{J}}(f)$ and the degree of freedom $\textbf{D}^{\bot}_K(\boldsymbol{v}_{h})$, yield the
required calculation.
\end{proof}
Based on above local virtual space, we define the global virtual space:
\begin{equation}
\label{discrete:Eh}
\boldsymbol{E}_{h}:=\{\boldsymbol{v}_{h}\in\boldsymbol{H}(\text{div},\mathrm{\Omega})\quad\text{s.t.}\quad\boldsymbol{v}_{h}|_K\in\boldsymbol{E}_{h}^{K}\quad\text{for all }K\in\mathcal{T}_{h}\},
\end{equation}
and then the global pressure space is
\begin{equation}
\label{discrete:Psih}
\Psi_{h}:=\{q\in L_{0}^{2}(\mathrm{\Omega})\quad\text{s.t.}\quad q|_{K}\in \mathbb{P}_{k_{J}}(K)\quad\text{for all }K\in\mathcal{T}_{h}\}.
\end{equation}
A crucial observation is that, extending to the three dimensional case the result in \cite{beirao2016mixed,zhou2023full}, the proposed discrete electric potential space (\ref{discrete:Psih}) and the $\boldsymbol{H}(\text{div})$-conforming current density space (\ref{discrete:Eh}) are imply that
\begin{equation}
\label{Div-J}
\text{div}\boldsymbol{E}_{h}\subseteq \Psi_{h},
\end{equation}
this key property that will lead to a divergence-free discrete current density.
\subsection{Discrete forms}
In this section, we will define some computable discrete local bilinear and trilinear forms in the standard VEM framework.

Firstly, for any $\boldsymbol{u},\boldsymbol{v},\boldsymbol{w}\in\boldsymbol{U}_{h}$, $\boldsymbol{J},\boldsymbol{K}\in\boldsymbol{E}_{h}$, $q\in Q_{h}$ and $\psi\in \Psi_{h}$, we decompose the forms defined in (\ref{A1})-(\ref{D}) into local contributions as follows:
\begin{align*}
A_{1}(\boldsymbol{u},\boldsymbol{v}):=\sum\limits_{K\in\mathcal{T}_{h}}A_{1}^{K}(\boldsymbol{u},\boldsymbol{v}),\quad
A_{2}(\boldsymbol{J},\boldsymbol{K}):=\sum\limits_{K\in\mathcal{T}_{h}}A_{2}^{K}(\boldsymbol{J},\boldsymbol{K}),\\
B(\boldsymbol{u},\boldsymbol{v}):=\sum\limits_{K\in\mathcal{T}_{h}}B^{K}(\boldsymbol{u},\boldsymbol{v}),\quad
D(\boldsymbol{K},\boldsymbol{v}):=\sum\limits_{K\in\mathcal{T}_{h}}D^{K}(\boldsymbol{K},\boldsymbol{v}),\\
C_{1}(\boldsymbol{u},q):=\sum\limits_{K\in\mathcal{T}_{h}}C_{1}^{K}(\boldsymbol{u},q),\quad
C_{2}(\boldsymbol{K},\psi):=\sum\limits_{K\in\mathcal{T}_{h}}C_{2}^{K}(\boldsymbol{K},\psi),\quad
E_{s}(\boldsymbol{u};\boldsymbol{v},\boldsymbol{w}):=\sum\limits_{K\in\mathcal{T}_{h}}D_{s}^{E}(\boldsymbol{u};\boldsymbol{v},\boldsymbol{w}),
\end{align*}

Secondly, to approximate $A_{1}^{K}(\cdot,\cdot):\boldsymbol{U}_{h}^{K}\times\boldsymbol{U}_{h}^{K}\rightarrow\mathbb{R}$,  $A_{2}^{K}(\cdot,\cdot):\boldsymbol{E}_{h}^{K}\times\boldsymbol{E}_{h}^{K}\rightarrow\mathbb{R}$, and $B^{K}(\cdot,\cdot):\boldsymbol{U}_{h}^{K}\times\boldsymbol{U}_{h}^{K}\rightarrow\mathbb{R}$,  we define computable discrete local bilinear forms
\begin{align}
\label{A1_h^K}
A_{1,h}^{K}(\boldsymbol{u}_{h},\boldsymbol{v}_{h})&:=A_{1}^{K}(\mathrm{\Pi}_{k_{u}}^{0,K}\boldsymbol{u}_{h},\mathrm{\Pi}_{k_{u}}^{0,K}\boldsymbol{v}_{h})+\mathcal{S}_{1}^{K}(\boldsymbol{u}_{h}-\mathrm{\Pi}_{k_{u}}^{0,K}\boldsymbol{u}_{h},\boldsymbol{v}_{h}-\mathrm{\Pi}_{k_{u}}^{0,K}\boldsymbol{v}_{h}),\\\label{A2_h^K}
A_{2,h}^{K}(\boldsymbol{J}_{h},\boldsymbol{K}_{h})&:=A_{2}^{K}(\mathrm{\Pi}_{k_{J}}^{0,K}\boldsymbol{J}_{h},\mathrm{\Pi}_{k_{J}}^{0,K}\boldsymbol{K}_{h})+\mathcal{S}_{2}^{K}(\boldsymbol{J}_{h}-\mathrm{\Pi}_{k_{J}}^{0,K}\boldsymbol{J}_{h},\boldsymbol{K}_{h}-\mathrm{\Pi}_{k_{J}}^{0,K}\boldsymbol{K}_{h}),\\\label{B_h^K}
B_{h}^{K}(\boldsymbol{u}_{h},\boldsymbol{v}_{h})&:=B^{K}(\mathrm{\Pi}_{k_{u}}^{\nabla,K}\boldsymbol{u}_{h},\mathrm{\Pi}_{k_{u}}^{\nabla,K}\boldsymbol{v}_{h})+\mathcal{S}_{3}^{K}(\boldsymbol{u}_{h}-\mathrm{\Pi}_{k_{u}}^{\nabla,K}\boldsymbol{u}_{h},\boldsymbol{v}_{h}-\mathrm{\Pi}_{k_{u}}^{\nabla,K}\boldsymbol{v}_{h}),
\end{align}
for all $\boldsymbol{u}_{h},\boldsymbol{v}_{h}\in\boldsymbol{U}_{h}^{K}$ and $\boldsymbol{J}_{h},\boldsymbol{K}_{h}\in\boldsymbol{E}_{h}^{K}$, where the symmetric, positive definite and computable discrete bilinear forms
 $\mathcal{S}_{1}^{K}:\boldsymbol{U}_{h}^{K}\times\boldsymbol{U}_{h}^{K}\rightarrow\mathbb{R}$, $\mathcal{S}_{2}^{K}:\boldsymbol{E}_{h}^{K}\times\boldsymbol{E}_{h}^{K}\rightarrow\mathbb{R}$, and $\mathcal{S}_{3}^{K}:\boldsymbol{U}_{h}^{K}\times\boldsymbol{U}_{h}^{K}\rightarrow\mathbb{R}$ satisfy
\begin{align}
\label{S_1^K}
\alpha_{0}A_{1}^{K}(\boldsymbol{v}_{h},\boldsymbol{v}_{h})\leq\mathcal{S}_{1}^{K}(\boldsymbol{v}_{h},\boldsymbol{v}_{h})&\leq \alpha_{1}A_{1}^{K}(\boldsymbol{v}_{h},\boldsymbol{v}_{h}),\\\label{S_2^K}
\alpha_{2}A_{2}^{K}(\boldsymbol{K}_{h},\boldsymbol{K}_{h})\leq\mathcal{S}_{2}^{K}(\boldsymbol{K}_{h},\boldsymbol{K}_{h})&\leq \alpha_{3}A_{2}^{K}(\boldsymbol{K}_{h},\boldsymbol{K}_{h}),\\\label{S_3^K}
\beta_{0}B^{K}(\boldsymbol{v}_{h},\boldsymbol{v}_{h})\leq\mathcal{S}_{3}^{K}(\boldsymbol{v}_{h},\boldsymbol{v}_{h})&\leq
\beta_{1}B^{K}(\boldsymbol{v}_{h},\boldsymbol{v}_{h}),
\end{align}
where $\alpha_{i}~(i=0,1,2,3)$ and $\beta_{j}~(j=0,1)$ are positive constants independent of $K$.
\begin{rem}
In the VEM literature \cite{brenner2018virtual}, there are more choices for VEM stability terms, and in this paper we consider simple and effective choices as follows
\begin{equation*}
\mathcal{S}_{1}^{K}(\cdot,\cdot)=|K|\sum_{i=1}^{\text{Dofs}(\boldsymbol{u})}\mathcal{X}_{i}(\cdot)\mathcal{X}_{i}(\cdot),
\quad\mathcal{S}_{2}^{K}(\cdot,\cdot)=|K|\sum_{i=1}^{\text{Dofs}(\boldsymbol{J})}\mathcal{X}_{i}(\cdot)\mathcal{X}_{i}(\cdot),\quad
\mathcal{S}_{3}^{K}(\cdot,\cdot)=h_{K}\sum_{i=1}^{\text{Dofs}(\boldsymbol{u})}\mathcal{X}_{i}(\cdot)\mathcal{X}_{i}(\cdot),
\end{equation*}
where $\mathcal{X}_{i}$ is the operator that selects the $i$-th degrees of freedom.
\end{rem}
Furthermore, for all  $(\boldsymbol{v}_{h},\boldsymbol{K}_{h})\in\boldsymbol{U}_{h}^{K}\times\boldsymbol{E}_{h}^{K}$ and $(\boldsymbol{q}_{k_{u}},\boldsymbol{q}_{k_{J}})\in[\mathbb{P}_{k_{u}}]^{3}\times[\mathbb{P}_{k_{J}}]^{3}$, we note that the following some important properties.
\begin{itemize}
\item[$\bullet$] consistencies: the definitions of (\ref{Pro-nabla-element})-(\ref{Pro-L2-element}) imply
\begin{equation}
\label{consistency}
A_{1,h}^{K}(\boldsymbol{v}_{h},\boldsymbol{q}_{k_{u}}) = A_{1}^{K}(\boldsymbol{v}_{h},\boldsymbol{q}_{k_{u}}),\quad
A_{2,h}^{K}(\boldsymbol{K}_{h},\boldsymbol{q}_{k_{J}}) = A_{2}^{K}(\boldsymbol{K}_{h},\boldsymbol{q}_{k_{J}}),\quad
B_{h}^{K}(\boldsymbol{v}_{h},\boldsymbol{q}_{k_{u}}) = B^{K}(\boldsymbol{v}_{h},\boldsymbol{q}_{k_{u}}),
\end{equation}
\item[$\bullet$] stabilities: the properties (\ref{S_1^K})-(\ref{S_3^K}) imply
\begin{align}
\label{StableA1}
\alpha_{0}A_{1}^{K}(\boldsymbol{v}_{h},\boldsymbol{v}_{h})\leq A_{1,h}^{K}(\boldsymbol{v}_{h},\boldsymbol{v}_{h})&\leq \alpha_{1}A_{1}^{K}(\boldsymbol{v}_{h},\boldsymbol{v}_{h}),\\\label{StableA2}
\alpha_{2}A_{2}^{K}(\boldsymbol{K}_{h},\boldsymbol{K}_{h})\leq A_{2,h}^{K}(\boldsymbol{K}_{h},\boldsymbol{K}_{h})&\leq \alpha_{3}A_{2}^{K}(\boldsymbol{K}_{h},\boldsymbol{K}_{h}),\\\label{StableB}
\beta_{0}B^{K}(\boldsymbol{v}_{h},\boldsymbol{v}_{h})\leq B_{h}^{K}(\boldsymbol{v}_{h},\boldsymbol{v}_{h})&\leq
\beta_{1}B^{K}(\boldsymbol{v}_{h},\boldsymbol{v}_{h}).
\end{align}
\end{itemize}
Obviously, we can clearly sum up the local contributions to define the global approximated bilinear form $A_{1,h}(\cdot,\cdot):\boldsymbol{U}_{h}\times\boldsymbol{U}_{h}\rightarrow\mathbb{R}$, $A_{2,h}(\cdot,\cdot):\boldsymbol{E}_{h}\times\boldsymbol{E}_{h}\rightarrow\mathbb{R}$, and $B_{h}(\cdot,\cdot):\boldsymbol{U}_{h}\times\boldsymbol{U}_{h}\rightarrow\mathbb{R}$ as follows:
\begin{equation*}
A_{1,h}(\boldsymbol{u}_{h},\boldsymbol{v}_{h})=\sum_{K\in\mathcal{T}_{h}}A_{1,h}^{K}(\boldsymbol{u}_{h},\boldsymbol{v}_{h}),\quad
A_{2,h}(\boldsymbol{J}_{h},\boldsymbol{K}_{h})=\sum_{K\in\mathcal{T}_{h}}A_{2,h}^{K}(\boldsymbol{J}_{h},\boldsymbol{K}_{h}),\quad B_{h}(\boldsymbol{u}_{h},\boldsymbol{v}_{h})=\sum_{K\in\mathcal{T}_{h}}B_{h}^{K}(\boldsymbol{u}_{h},\boldsymbol{v}_{h}),
\end{equation*}
for all $\boldsymbol{u}_{h},\boldsymbol{v}_{h}\in\boldsymbol{U}_{h}$ and $\boldsymbol{J}_{h},\boldsymbol{K}_{h}\in\boldsymbol{E}_{h}$.
These inner product induce the norms
\begin{equation}
\label{define-norm}
\|\boldsymbol{v}_{h}\|_{A_{1}} = [A_{1,h}(\boldsymbol{v}_{h},\boldsymbol{v}_{h})]^{\frac{1}{2}},\quad
\|\boldsymbol{K}_{h}\|_{A_{2}} =[A_{2,h}(\boldsymbol{K}_{h},\boldsymbol{K}_{h})]^{\frac{1}{2}},\quad
\|\boldsymbol{v}_{h}\|_{B} =[B_{h}(\boldsymbol{v}_{h},\boldsymbol{v}_{h})]^{\frac{1}{2}}.
\end{equation}

Thirdly, the global bilinear form $C_{1}(\cdot,\cdot):\boldsymbol{U}_{h}\times Q_{h}\rightarrow\mathbb{R}$ and $C_{1}(\cdot,\cdot):\boldsymbol{E}_{h}\times \Psi_{h}\rightarrow\mathbb{R}$  can be directly defined by
\begin{align}
\label{global C1}
C_{1}(\boldsymbol{v}_{h},q) &= \sum_{K\in\mathcal{T}_{h}}C_{1}^{K}(\boldsymbol{v}_{h},q)=\sum_{K\in\mathcal{T}_{h}}\int_{K}\text{div}\boldsymbol{v}_{h}q\mathrm{d}K\quad \text{for all }(\boldsymbol{v}_{h},q)\in \boldsymbol{U}_{h}\times Q_{h},\\\label{global_C2}
C_{2}(\boldsymbol{K}_{h},\psi) &= \sum_{K\in\mathcal{T}_{h}}C_{2}^{K}(\boldsymbol{K}_{h},q)=\sum_{K\in\mathcal{T}_{h}}\int_{K}\text{div}\boldsymbol{K}_{h}\psi\mathrm{d}K\quad \text{for all }(\boldsymbol{K}_{h},\psi)\in \boldsymbol{E}_{h}\times \Psi_{h},
\end{align}
without introducing any approximation. The reason are that (\ref{global C1}) is computable from $\textbf{D}^{i}_{K}~(i:=\text{n},\text{e},\text{div})$ and (\ref{global_C2}) is also computable from $\textbf{D}^{j}_{K}~(j:=\boldsymbol{n},\nabla)$.

Fourthly, to approximate $E_{s}^{K}(\cdot;\cdot,\cdot):\boldsymbol{U}_{h}^{K}\times\boldsymbol{U}_{h}^{K}\times\boldsymbol{U}_{h}^{K}\rightarrow\mathbb{R}$, we define a computable discrete local trilinear form
\begin{align}
\label{E_h^K}
E_{s,h}^{K}(\boldsymbol{u}_{h};\boldsymbol{v}_{h},\boldsymbol{w}_{h}):=\frac{1}{2}\int_{K}\big[(\boldsymbol{\mathrm{\Pi}}_{k_{u}-1}^{0}\nabla\boldsymbol{v}_{h})(\mathrm{\Pi}_{k_{u}}^{0}\boldsymbol{u}_{h})\big]\cdot\mathrm{\Pi}_{k_{u}}^{0}\boldsymbol{w}_{h}-\big[(\boldsymbol{\mathrm{\Pi}}_{k_{u}-1}^{0}\nabla\boldsymbol{w}_{h})(\mathrm{\Pi}_{k_{u}}^{0}\boldsymbol{u}_{h})\big]\cdot\mathrm{\Pi}_{k_{u}}^{0}\boldsymbol{v}_{h}\mathrm{d}K,
\end{align}
for all $\boldsymbol{u}_{h},\boldsymbol{v}_{h},\boldsymbol{w}_{h}\in\boldsymbol{U}_{h}^{K}$.
As usual, we can simply sum up the local contributions to define the global approximated trilinear form $E_{s,h}(\cdot;\cdot,\cdot):\boldsymbol{U}_{h}\times\boldsymbol{U}_{h}\times\boldsymbol{U}_{h}\rightarrow\mathbb{R}$ as follows:
\begin{equation}
\label{global D}
E_{s,h}(\boldsymbol{u}_{h};\boldsymbol{v}_{h},\boldsymbol{w}_{h}):=\sum_{K\in\mathcal{T}_{h}}E_{s,h}^{K}(\boldsymbol{u}_{h};\boldsymbol{v}_{h},\boldsymbol{w}_{h}),\quad
\text{for all } \boldsymbol{u}_{h},\boldsymbol{v}_{h},\boldsymbol{w}_{h}\in\boldsymbol{U}_{h}.
\end{equation}
Finally, to approximate $D^{K}(\cdot,\cdot):\boldsymbol{E}_{h}^{K}\times\boldsymbol{U}_{h}^{K}\rightarrow\mathbb{R}$, we define a computable discrete local bilinear form
\begin{align}
\label{D_h^K}
D_{h}^{K}(\boldsymbol{K}_{h},\boldsymbol{v}_{h}):=\kappa\int_{K}(\mathrm{\Pi}_{k_{J}}^{0,K}\boldsymbol{K}_{h}\times\boldsymbol{B})\cdot\mathrm{\Pi}_{k_{u}}^{0,K}\boldsymbol{v}_{h}\mathrm{d}K\quad\text{for all }(\boldsymbol{K}_{h},\boldsymbol{v}_{h})\in\boldsymbol{E}_{h}^{K}\times\boldsymbol{U}_{h}^{K},
\end{align}
and the global approximated bilinear form $D_{h}(\cdot,\cdot):\boldsymbol{E}_{h}\times\boldsymbol{U}_{h}\rightarrow\mathbb{R}$ is defined as follows:
\begin{equation*}
D_{h}(\boldsymbol{K}_{h},\boldsymbol{v}_{h}):=\sum_{K\in\mathcal{T}_{h}}D_{h}^{K}(\boldsymbol{K}_{h},\boldsymbol{v}_{h})\quad
\text{for all } (\boldsymbol{K}_{h},\boldsymbol{v}_{h})\in\boldsymbol{E}_{h}\times\boldsymbol{U}_{h}.
\end{equation*}

Next, we summarize some of their properties, see \cite{beirao2020stokes,zhou2023full} for the details.
\begin{prop}
For any $\boldsymbol{u_{h}},\boldsymbol{v_{h}},\boldsymbol{w_{h}}\in\boldsymbol{U}_{h}$ and $\boldsymbol{J_{h}},\boldsymbol{K_{h}}\in\boldsymbol{E}_{h}$, the global forms $A_{1,h}(\cdot,\cdot):\boldsymbol{U}_{h}\times\boldsymbol{U}_{h}\rightarrow\mathbb{R}$,
$A_{2,h}(\cdot,\cdot):\boldsymbol{E}_{h}\times\boldsymbol{E}_{h}\rightarrow\mathbb{R}$,
$B_{h}(\cdot,\cdot):\boldsymbol{U}_{h}\times\boldsymbol{U}_{h}\rightarrow\mathbb{R}$ and $E_{s,h}(\cdot;\cdot,\cdot):\boldsymbol{U}_{h}\times\boldsymbol{U}_{h}\times\boldsymbol{U}_{h}\rightarrow\mathbb{R}$ satisfy the following properties:
\begin{align}
\label{A1_h-disrete}
\alpha_{0}|\boldsymbol{u}_{h}|_{1,\mathrm{\Omega}}^2&\leq A_{1,h}(\boldsymbol{u}_{h},\boldsymbol{u}_{h})~\text{and}~A_{1,h}(\boldsymbol{u}_{h},\boldsymbol{v}_{h})\leq \alpha_{1}|\boldsymbol{u}_{h}|_{1,\mathrm{\Omega}}|\boldsymbol{v}_{h}|_{1,\mathrm{\Omega}},\\\label{A2_h-disrete}
\alpha_{2}\|\boldsymbol{K}_{h}\|_{0,\mathrm{\Omega}}^2&\leq A_{2,h}(\boldsymbol{K}_{h},\boldsymbol{K}_{h})~\text{and}~A_{2,h}(\boldsymbol{J}_{h},\boldsymbol{K}_{h})\leq \alpha_{3}\|\boldsymbol{J}_{h}\|_{0,\mathrm{\Omega}}\|\boldsymbol{K}_{h}\|_{0,\mathrm{\Omega}},\\\label{B_h-disrete}
\alpha_{4}|\boldsymbol{u}_{h}|_{0,\mathrm{\Omega}}^2&\leq B_{h}(\boldsymbol{u}_{h},\boldsymbol{u}_{h})~\text{and}~B_{h}(\boldsymbol{u}_{h},\boldsymbol{v}_{h})\leq \alpha_{5}|\boldsymbol{u}_{h}|_{1,\mathrm{\Omega}}|\boldsymbol{v}_{h}|_{0,\mathrm{\Omega}},\\\label{C_h-disrete}
E_{s,h}(\boldsymbol{u}_{h};\boldsymbol{v}_{h},\boldsymbol{w}_{h})&\leq \alpha_{6}|\boldsymbol{u}_{h}|_{1,\mathrm{\Omega}}|\boldsymbol{v}_{h}|_{1,\mathrm{\Omega}}|\boldsymbol{w}_{h}|_{1,\mathrm{\Omega}}
~\text{and}~E_{s,h}(\boldsymbol{u}_{h};\boldsymbol{v}_{h},\boldsymbol{v}_{h})=0,
\end{align}
where $\alpha_{i}(i=0,\cdots,6)$ are positive constants and independent of $h$.
\end{prop}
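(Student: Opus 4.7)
The plan is to exploit the fact that each global form is defined by summing local contributions, so I would reduce every bound to its elementwise counterpart and then assemble. Concretely, for the symmetric bilinear forms $A_{1,h}$, $A_{2,h}$, $B_h$, I would start from the local stability inequalities \eqref{StableA1}--\eqref{StableB}, which are the heart of the argument and have already been established. Summing the lower bounds over $K\in\mathcal{T}_h$ and combining with the continuous coercivity results \eqref{A1-continuous}--\eqref{B-continuous} of the first Proposition immediately yields the coercivity estimates for $A_{1,h}$, $A_{2,h}$, $B_h$. For the upper bounds, since $A_{1,h}$, $A_{2,h}$, $B_h$ are symmetric positive semidefinite (the stabilizers $\mathcal{S}_i^K$ are symmetric positive definite and the projection parts inherit symmetry), the discrete Cauchy--Schwarz inequality applies directly: $A_{1,h}(u_h,v_h)\le A_{1,h}(u_h,u_h)^{1/2}A_{1,h}(v_h,v_h)^{1/2}$, and the upper stability bound together with continuity of $A_1$ closes the estimate. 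The argument for $A_{2,h}$ and $B_h$ is identical, with the corresponding norms swapped in.

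For the trilinear form $E_{s,h}$, the skew-symmetric identity $E_{s,h}(u_h;v_h,v_h)=0$ is immediate from the definition \eqref{E_h^K}: setting $w_h=v_h$ in \eqref{global D} produces two identical terms with opposite signs that cancel on every $K$. For the continuity bound, I would localize: on each $K$ the integrand of $E_{s,h}^K$ is a product of three polynomial projections, so I would apply a triple Hölder inequality with exponents $(2,3,6)$, grouping $\boldsymbol{\Pi}_{k_u-1}^{0,K}\nabla v_h$ in $L^2(K)$, $\mathrm{\Pi}_{k_u}^{0,K}\boldsymbol{u}_h$ in $L^3(K)$, and $\mathrm{\Pi}_{k_u}^{0,K}\boldsymbol{w}_h$ in $L^6(K)$. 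Using the $L^p$-boundedness of the polynomial projections on shape-regular polyhedra (see \cite{beirao2020stokes}), each projection norm can be controlled by the corresponding norm of the argument; a global Sobolev embedding $H^1(\Omega)\hookrightarrow L^6(\Omega)$ and $H^1(\Omega)\hookrightarrow L^3(\Omega)$ in three dimensions then converts the $L^3$ and $L^6$ norms into $|\,\cdot\,|_{1,\Omega}$, while $\nabla v_h$ in $L^2$ is directly $|v_h|_{1,\Omega}$. Summation over $K$, with the discrete Cauchy--Schwarz inequality across elements for the Hölder groupings, produces the asserted global bound $E_{s,h}(u_h;v_h,w_h)\le \alpha_6\,|u_h|_{1,\Omega}|v_h|_{1,\Omega}|w_h|_{1,\Omega}$.

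The main obstacle I anticipate is the sharp Hölder bookkeeping for the trilinear term in three dimensions. Unlike the 2D case, in 3D one must use the $(2,3,6)$ splitting (or $(3,3,3)$ with a Gagliardo--Nirenberg interpolation) rather than the simpler $(4,2,4)$ triple that works planarly, and one must invoke uniform $L^p$-stability of the polynomial projections $\mathrm{\Pi}_{k_u}^{0,K}$ and $\boldsymbol{\Pi}_{k_u-1}^{0,K}$ on star-shaped polyhedral elements satisfying R1--R3. Once those projection estimates are granted on the basis of \cite{beirao2020stokes,brenner2018virtual}, the rest of the argument is routine assembly. The remaining properties are essentially algebraic consequences of the local constructions.
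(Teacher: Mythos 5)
The paper itself offers no proof of this proposition; it is stated as a summary with a pointer to \cite{beirao2020stokes,zhou2023full}, so your argument is a reconstruction rather than a parallel of anything in the text. Your reconstruction follows the standard VEM route and is essentially sound: coercivity by summing the local stability bounds \eqref{StableA1}--\eqref{StableB} over $K\in\mathcal{T}_h$ and invoking the coercivity of the exact forms, continuity by the Cauchy--Schwarz inequality for the symmetric positive semidefinite discrete forms combined with the upper stability bounds, the identity $E_{s,h}(\boldsymbol{u}_h;\boldsymbol{v}_h,\boldsymbol{v}_h)=0$ by direct cancellation in \eqref{E_h^K}, and the trilinear continuity by a generalized H\"older inequality, $L^p$-stability of the polynomial projections on star-shaped elements, and the Sobolev embeddings of $H^1(\mathrm{\Omega})$ in three dimensions. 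This is exactly the argument the cited references carry out, so nothing essential is missing.

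Two remarks. First, your claim that the $(4,2,4)$ H\"older splitting ``works planarly'' but not in 3D is inaccurate: $H^{1}(\mathrm{\Omega})\hookrightarrow L^{4}(\mathrm{\Omega})$ holds for $d\le 4$, so the grouping $\|\mathrm{\Pi}^{0}\boldsymbol{u}_h\|_{L^{4}}\,\|\boldsymbol{\mathrm{\Pi}}^{0}\nabla\boldsymbol{v}_h\|_{L^{2}}\,\|\mathrm{\Pi}^{0}\boldsymbol{w}_h\|_{L^{4}}$ is equally available in three dimensions; your $(2,3,6)$ choice is fine but not forced. Second, and more importantly, your summation argument does not deliver the inequalities exactly as printed: summing \eqref{S_1^K} yields coercivity of $A_{1,h}$ with respect to $\|\cdot\|_{0,\mathrm{\Omega}}$ (since $A_{1}$ is the $L^{2}$ inner product), not the $H^{1}$ seminorm appearing in \eqref{A1_h-disrete}, and likewise $B_{h}$ is coercive in $|\cdot|_{1,\mathrm{\Omega}}$ rather than $|\cdot|_{0,\mathrm{\Omega}}$ as written in \eqref{B_h-disrete}. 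These are norm mismatches inherited from the continuous proposition \eqref{A1-continuous}--\eqref{B-continuous}, and the versions your proof actually establishes are the correct ones (and the ones used in the energy estimates via \eqref{define-norm}); you should state explicitly that you are proving the corrected inequalities, since the literal statement $\alpha_{0}|\boldsymbol{u}_{h}|_{1,\mathrm{\Omega}}^{2}\le A_{1,h}(\boldsymbol{u}_{h},\boldsymbol{u}_{h})$ cannot hold with a constant independent of $h$.
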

\begin{prop}
\label{discrete-inf-sup}
The discrete form $C_{1}(\cdot,\cdot):\boldsymbol{U}_{h}\times Q_{h}\rightarrow\mathbb{R}$ and $C_{2}(\cdot,\cdot):\boldsymbol{E}_{h}\times \Psi_{h}\rightarrow\mathbb{R}$ satisfy the following properties:
\begin{equation}
\sup_{\boldsymbol{0}\neq\boldsymbol{u}_{h}\in\boldsymbol{U}_{h}}
\frac{C_{1}(\boldsymbol{u}_{h},q_{h})}{|\boldsymbol{u}_{h}|_{1,\mathrm{\Omega}}}\geq\gamma^{*}\|q_{h}\|_{0,\mathrm{\Omega}},\quad\sup_{\boldsymbol{0}\neq\boldsymbol{K}_{h}\in\boldsymbol{E}_{h}}
\frac{C_{2}(\boldsymbol{K}_{h},\psi_{h})}{\|\boldsymbol{K}_{h}\|_{0,\mathrm{\Omega}}}\geq\eta^{*}\|\psi_{h}\|_{0,\mathrm{\Omega}},\quad\text{for all }(q,\psi_{h})\in Q_{h}\times\Psi_{h},
\end{equation}
where $\gamma^{*}>0$ and $\eta^{*}>0$ are two constants only depending on $\mathrm{\Omega}$.
\end{prop}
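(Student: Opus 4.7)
The plan is to establish both discrete inf-sup conditions by a Fortin-type argument, leveraging (i) the continuous inf-sup conditions already recorded in the previous Proposition for $C_1(\cdot,\cdot)$ and $C_2(\cdot,\cdot)$, and (ii) the commuting-diagram property of the canonical virtual element interpolation operators together with the inclusions $\mathrm{div}\,\boldsymbol{U}_h \subseteq Q_h$ from \eqref{Div-u} and $\mathrm{div}\,\boldsymbol{E}_h \subseteq \Psi_h$ from \eqref{Div-J}.

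For the first inequality, given $q_h \in Q_h \subset Q$, the continuous inf-sup yields $\boldsymbol{v} \in \boldsymbol{U}$ with $C_1(\boldsymbol{v}, q_h) \geq \gamma \|q_h\|_{0,\Omega}|\boldsymbol{v}|_{1,\Omega}$. I would then introduce the canonical interpolation operator $\mathcal{I}_h^{\boldsymbol{U}}:\boldsymbol{U} \cap [H^s(\Omega)]^3 \to \boldsymbol{U}_h$ associated with the degrees of freedom in $\textbf{P1}$. The degrees of freedom $\textbf{D}_K^{\mathrm{div}}$, together with the face degrees $\textbf{D}_K^{f}$ involving the normal component, are designed precisely so that $\mathrm{div}(\mathcal{I}_h^{\boldsymbol{U}} \boldsymbol{v}) = \Pi^{0,K}_{k_u-1}(\mathrm{div}\,\boldsymbol{v})$ elementwise; since test pressures $q_h|_K \in \mathbb{P}_{k_u-1}(K)$, the $L^2$-projection is immaterial and one obtains $C_1(\mathcal{I}_h^{\boldsymbol{U}}\boldsymbol{v}, q_h) = C_1(\boldsymbol{v}, q_h)$. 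Combined with the standard $H^1$-stability $|\mathcal{I}_h^{\boldsymbol{U}} \boldsymbol{v}|_{1,\Omega} \leq C |\boldsymbol{v}|_{1,\Omega}$ (proved in \cite{beirao2020stokes} for the enhanced Stokes VEM), setting $\boldsymbol{u}_h = \mathcal{I}_h^{\boldsymbol{U}} \boldsymbol{v}$ delivers the first bound with $\gamma^* = \gamma/C$.

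The second inequality is handled in a wholly analogous fashion using the $\boldsymbol{H}(\mathrm{div})$-conforming space $\boldsymbol{E}_h$. Given $\psi_h \in \Psi_h$, the continuous inf-sup furnishes $\boldsymbol{K} \in \boldsymbol{E}$ such that $C_2(\boldsymbol{K}, \psi_h) \geq \eta \|\psi_h\|_{0,\Omega}\|\boldsymbol{K}\|_{0,\Omega}$ (modulo the usual lifting to control $\|\boldsymbol{K}\|_{0}$ via $\mathrm{div}\,\boldsymbol{K} = \psi_h$). I would then use the interpolation operator $\mathcal{I}_h^{\boldsymbol{E}}$ built from the face normal moments $\textbf{D}_K^{\boldsymbol{n}}$ and the interior moments $\textbf{D}_K^{\nabla},\textbf{D}_K^{\bot}$, which by its very definition (identity~\eqref{Div-J-computable} and the choice of the face degrees) satisfies the commuting relation $\mathrm{div}(\mathcal{I}_h^{\boldsymbol{E}} \boldsymbol{K}) = \Pi^{0,K}_{k_J}(\mathrm{div}\,\boldsymbol{K})$. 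Testing against $\psi_h|_K \in \mathbb{P}_{k_J}(K)$ then gives $C_2(\mathcal{I}_h^{\boldsymbol{E}} \boldsymbol{K}, \psi_h) = C_2(\boldsymbol{K}, \psi_h)$, and the $L^2$-stability of $\mathcal{I}_h^{\boldsymbol{E}}$ (the 3D analogue in the spirit of \cite{caceres2017mixed,beirao2016mixed,zhou2023full}) closes the argument with $\eta^* = \eta/C'$.

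The principal technical obstacle is the stability of the interpolation operators $\mathcal{I}_h^{\boldsymbol{U}}$ and $\mathcal{I}_h^{\boldsymbol{E}}$ on polyhedral meshes, rather than the commuting property, which follows directly from the degree-of-freedom design. In three dimensions one must combine a trace–lifting argument on each face with interior stability obtained via the mesh regularity assumptions \textbf{R1}–\textbf{R3}, passing through a Scott–Zhang-type construction on a virtual subtriangulation to avoid assuming extra regularity on $\boldsymbol{v}$ and $\boldsymbol{K}$. Once these stability estimates (available in the cited literature) are invoked, the Fortin criterion gives both discrete inf-sup inequalities with constants independent of $h$.
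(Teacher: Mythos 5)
The paper offers no proof of this proposition: it is stated as a known result with a pointer to \cite{beirao2020stokes,zhou2023full}, where exactly the Fortin-operator argument you describe (continuous inf-sup, plus interpolants commuting with the divergence via the DOFs $\textbf{D}^{\text{div}}_K$, $\textbf{D}^{f}_K$ and $\textbf{D}^{\boldsymbol{n}}_K$, $\textbf{D}^{\nabla}_K$, plus $h$-uniform stability of those interpolants) is carried out. Your sketch is correct and matches that standard route, including the two genuine technical points you rightly flag: the need for a quasi-interpolation or elliptic-lifting step to give the nodal/face degrees of freedom meaning for functions that are merely in $[H_0^1(\Omega)]^3$ or $\boldsymbol{H}_0(\mathrm{div},\Omega)$, and the $h$-independent stability of the resulting operators under \textbf{R1}--\textbf{R3}.
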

\section{Numerical discrete schemes}
At the beginning of this section, we introduce a sequence of time steps $t^{n}=n\Delta t~(\text{where}~n=0,1,2,...,N)$ with $\Delta t = T/N$  being the time step. The unknown fields $(\boldsymbol{u}_{h},p_{h},\boldsymbol{J}_{h},\phi_{h})$ at these discrete times $t^n$ are approximated by the discrete quantities
\begin{equation*}
\boldsymbol{u}_{h}^{n}\approx\boldsymbol{u}(t^{n}),\quad p_{h}^{n}\approx p(t^{n}),\quad\boldsymbol{J}_{h}^{n}\approx\boldsymbol{J}(t^{n}),\quad \phi_{h}^{n}\approx \phi(t^{n}),\quad s^{n}\approx s(t^{n}),\quad R^{n}= R(t^{n}).
\end{equation*}
To facilitate writing, we also define the discrete differential operators $\delta_{t}$ and $\delta_{t}^{2}$ as:
\begin{equation*}
\delta_{t}^{n+1}\boldsymbol{u}_{h} = \frac{\boldsymbol{u}_{h}^{n+1}-\boldsymbol{u}_{h}^{n}}{\Delta t},\quad\delta_{t}^{n+1}s= \frac{s^{n+1}-s^{n}}{\Delta t},\quad
\delta_{t}^{2,n+1}\boldsymbol{u}_{h} = \frac{3\boldsymbol{u}_{h}^{n+1}-4\boldsymbol{u}_{h}^{n}+\boldsymbol{u}_{h}^{n-1}}{2\Delta t},\quad \boldsymbol{\tilde{u}}_{h} = 2\boldsymbol{u}_{h}^{n}-\boldsymbol{u}^{n-1}.
\end{equation*}
\subsection{A fully discrete first-order SAV scheme \label{section:first-order}}
In this section, we now construct a fully discrete first-order SAV virtual element approximation of equations (\ref{variational-eq}). The initial data $(\boldsymbol{u}_{h}^{0},s^{0})\in\boldsymbol{U}_{h}\times\mathbb{R}$ are given by $(\boldsymbol{u}_{0},s(0))$, and the other initial data $\boldsymbol{J}_{h}^{0}\in
\boldsymbol{E}_{h}$ are obtained by solving the following equations:
\begin{equation}
\label{Initial-first}
\begin{aligned}
A_{2,h}(\boldsymbol{J}_{h}^{0},\boldsymbol{K}_{h})+C_{2,h}(\boldsymbol{K}_{h},\phi_{h}^{0})
+\frac{s^{0}}{R^{0}}D_{h}(\boldsymbol{K}_{h},\boldsymbol{u}_{h}^{0})&=0,\\
C_{2,h}(\boldsymbol{J}_{h}^{0},\psi_{h}) &=0,
\end{aligned}
\end{equation}
for all $(\boldsymbol{K}_{h},\psi_{h})\in
\boldsymbol{E}_{h}\times\Psi_{h}$. Assuming we have computed $(\boldsymbol{u}_{h}^{n},p_{h}^{n},\boldsymbol{J}_{h}^{n},\phi_{h}^{n},s^{n})\in \boldsymbol{U}_{h}\times
Q_{h}\times\boldsymbol{E}_{h}\times\Psi_{h}\times\mathbb{R}$ for $n>0$, and then we apply the backward Euler discretization in time and the virtual element method in spaces to deduce the numerical scheme reads as: find $\big(\boldsymbol{u}_{h}^{n+1},p_{h}^{n+1},\boldsymbol{J}_{h}^{n+1},\phi_{h}^{n+1},s^{n+1}\big)\in \boldsymbol{U}_{h}\times
Q_{h}\times\boldsymbol{E}_{h}\times\Psi_{h}\times\mathbb{R}$ such that
\begin{subequations}
	\label{eq:full-discrete-first-order}
	\begin{align}
\label{eq:full-discrete-first-order-u}
A_{1,h}(\delta_{t}\boldsymbol{u}_{h}^{n+1},\boldsymbol{v}_{h})+B_{h}(\boldsymbol{u}_{h}^{n+1},\boldsymbol{v}_{h})+C_{1,h}(\boldsymbol{v}_{h},p_{h}^{n+1})
+\frac{s^{n+1}}{R^{n+1}}E_{s,h}(\boldsymbol{u}_{h}^{n};\boldsymbol{u}_{h}^{n},\boldsymbol{v}_{h})-\frac{s^{n+1}}{R^{n+1}}D_{h}(\boldsymbol{J}_{h}^{n},\boldsymbol{v}_{h})&=0,\\\label{eq:full-discrete-first-order-p}
C_{1,h}(\boldsymbol{u}_{h}^{n+1},q_{h}) &=0,\\\label{eq:full-discrete-first-order-J}
A_{2,h}(\boldsymbol{J}_{h}^{n+1},\boldsymbol{K}_{h})+C_{2,h}(\boldsymbol{K}_{h},\phi_{h}^{n+1})
+\frac{s^{n+1}}{R^{n+1}}D_{h}(\boldsymbol{K}_{h},\boldsymbol{u}_{h}^{n})&=0,\\\label{eq:full-discrete-first-order-phi}
C_{2,h}(\boldsymbol{J}_{h}^{n+1},\psi_{h}) &=0,\\ \label{eq:full-discrete-first-order-s} \delta_{t}s^{n+1}+s^{n+1}Q-\frac{1}{R^{n+1}}E_{s,h}(\boldsymbol{u}_{h}^{n};\boldsymbol{u}_{h}^{n},\boldsymbol{u}_{h}^{n+1})+\frac{1}{R^{n+1}}D_{h}(\boldsymbol{J}_{h}^{n},\boldsymbol{u}_{h}^{n+1})
-\frac{1}{R^{n+1}}D_{h}(\boldsymbol{J}_{h}^{n+1},\boldsymbol{u}_{h}^{n})& =0,
	\end{align}
\end{subequations}
for all $\big(\boldsymbol{v}_{h},q_{h},\boldsymbol{K}_{h},\psi_{h}\big)\in \boldsymbol{U}_{h}\times
Q_{h}\times\boldsymbol{E}_{h}\times
\Psi_{h}$  there hold. Subsequently, we establish the unconditionally energy stability of the formulation (\ref{eq:full-discrete-first-order}) as follows.
\begin{thm}
The formulation (\ref{eq:full-discrete-first-order}) is unconditionally energy stable in the sense that the following energy estimate
\begin{equation}
\label{Energy-first-order}
\delta_{t}^{n+1}\mathrm{E}^{n+1}\leq-\|\boldsymbol{u}_{h}^{n+1}\|_{B}^{2}-\|\boldsymbol{J}_{h}^{n+1}\|_{A_{2}}^{2}-Q|s^{n+1}|^2\text{ with } \mathrm{E}^{n+1} = \frac{1}{2}\|\boldsymbol{u}_{h}^{n+1}\|_{A_{1}}^{2}+ \frac{1}{2}|s^{n+1}|^2.
\end{equation}

\end{thm}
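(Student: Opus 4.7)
The plan is to follow the classical SAV energy-estimate strategy: choose a judicious set of test functions in the five equations of (\ref{eq:full-discrete-first-order}), sum the resulting identities, and verify that the nonlinear and magnetic coupling contributions cancel exactly, leaving only dissipative terms and time-difference terms that can be bounded below by a discrete derivative.

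Concretely, I would first take $\boldsymbol{v}_{h}=\boldsymbol{u}_{h}^{n+1}$ in (\ref{eq:full-discrete-first-order-u}), $q_{h}=p_{h}^{n+1}$ in (\ref{eq:full-discrete-first-order-p}), $\boldsymbol{K}_{h}=\boldsymbol{J}_{h}^{n+1}$ in (\ref{eq:full-discrete-first-order-J}), and $\psi_{h}=\phi_{h}^{n+1}$ in (\ref{eq:full-discrete-first-order-phi}). The divergence-like pairings $C_{1,h}(\boldsymbol{u}_{h}^{n+1},p_{h}^{n+1})$ and $C_{2,h}(\boldsymbol{J}_{h}^{n+1},\phi_{h}^{n+1})$ vanish by the two constraint equations, so the two resulting equations reduce to
\[
A_{1,h}(\delta_{t}\boldsymbol{u}_{h}^{n+1},\boldsymbol{u}_{h}^{n+1})+\|\boldsymbol{u}_{h}^{n+1}\|_{B}^{2}+\tfrac{s^{n+1}}{R^{n+1}}E_{s,h}(\boldsymbol{u}_{h}^{n};\boldsymbol{u}_{h}^{n},\boldsymbol{u}_{h}^{n+1})-\tfrac{s^{n+1}}{R^{n+1}}D_{h}(\boldsymbol{J}_{h}^{n},\boldsymbol{u}_{h}^{n+1})=0,
\]
\[
\|\boldsymbol{J}_{h}^{n+1}\|_{A_{2}}^{2}+\tfrac{s^{n+1}}{R^{n+1}}D_{h}(\boldsymbol{J}_{h}^{n+1},\boldsymbol{u}_{h}^{n})=0.
\]
Next I would multiply the SAV equation (\ref{eq:full-discrete-first-order-s}) by $s^{n+1}$ and add all three identities. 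The crucial observation is that the three coupling/nonlinear terms picked up from (\ref{eq:full-discrete-first-order-u}) and (\ref{eq:full-discrete-first-order-J}), namely $\tfrac{s^{n+1}}{R^{n+1}}E_{s,h}(\boldsymbol{u}_{h}^{n};\boldsymbol{u}_{h}^{n},\boldsymbol{u}_{h}^{n+1})$, $-\tfrac{s^{n+1}}{R^{n+1}}D_{h}(\boldsymbol{J}_{h}^{n},\boldsymbol{u}_{h}^{n+1})$, and $\tfrac{s^{n+1}}{R^{n+1}}D_{h}(\boldsymbol{J}_{h}^{n+1},\boldsymbol{u}_{h}^{n})$, are each matched by an equal-and-opposite term produced when $s^{n+1}$ multiplies the right-hand-side three auxiliary terms of (\ref{eq:full-discrete-first-order-s}); they cancel in pairs by construction. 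This is precisely the design feature of the ``zero-energy-contribution'' SAV formulation (\ref{eq:zero}).

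After this cancellation only the time-increment terms and the dissipation terms remain:
\[
A_{1,h}(\delta_{t}\boldsymbol{u}_{h}^{n+1},\boldsymbol{u}_{h}^{n+1})+s^{n+1}\delta_{t}s^{n+1}+\|\boldsymbol{u}_{h}^{n+1}\|_{B}^{2}+\|\boldsymbol{J}_{h}^{n+1}\|_{A_{2}}^{2}+Q|s^{n+1}|^{2}=0.
\]
Finally I would invoke the elementary identity $2a(a-b)=a^{2}-b^{2}+(a-b)^{2}\geq a^{2}-b^{2}$, applied with the symmetric positive-definite inner products $A_{1,h}(\cdot,\cdot)$ and the scalar product on $\mathbb{R}$, to obtain
\[
A_{1,h}(\delta_{t}\boldsymbol{u}_{h}^{n+1},\boldsymbol{u}_{h}^{n+1})\geq \delta_{t}^{n+1}\bigl(\tfrac{1}{2}\|\boldsymbol{u}_{h}^{n+1}\|_{A_{1}}^{2}\bigr),\qquad s^{n+1}\delta_{t}s^{n+1}\geq \delta_{t}^{n+1}\bigl(\tfrac{1}{2}|s^{n+1}|^{2}\bigr),
\]
which immediately yields (\ref{Energy-first-order}) with $\mathrm{E}^{n+1}=\tfrac{1}{2}\|\boldsymbol{u}_{h}^{n+1}\|_{A_{1}}^{2}+\tfrac{1}{2}|s^{n+1}|^{2}$.

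I do not expect any genuine obstacle here, since the scheme is specifically engineered so that the SAV test function $s^{n+1}$ (rather than $s^{n+1}/R^{n+1}$) produces exact cancellation; the only point requiring care is bookkeeping the signs and arguments of the three coupling terms to ensure that each is paired correctly with its partner in the multiplied SAV equation. The entire argument is purely algebraic, does not depend on any CFL-type restriction on $\Delta t$, and uses only the symmetry of $A_{1,h},A_{2,h},B_{h}$ and the divergence-constraint equations, which justifies the designation ``unconditionally energy stable''.
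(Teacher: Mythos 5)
Your proposal is correct and follows essentially the same route as the paper's proof: the same choice of test functions $\big(\boldsymbol{u}_{h}^{n+1},p_{h}^{n+1},\boldsymbol{J}_{h}^{n+1},\phi_{h}^{n+1}\big)$, multiplication of the SAV equation by $s^{n+1}$, exact pairwise cancellation of the three coupling terms, and the identity $2a(a-b)=a^{2}-b^{2}+(a-b)^{2}$ to extract the discrete energy derivative. The only cosmetic difference is that the paper applies the telescoping identity before summing the identities rather than after, which changes nothing in substance.
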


\begin{proof}
Taking $\big(\boldsymbol{v}_{h},q_{h},\boldsymbol{K}_{h},\psi_{h}\big)=\big(\boldsymbol{u}_{h}^{n+1},p_{h}^{n+1},\boldsymbol{J}_{h}^{n+1},\phi_{h}^{n+1}\big)$ in equations (\ref{eq:full-discrete-first-order-u})-(\ref{eq:full-discrete-first-order-phi}) and applying the identity $2a(a-b)=a^2-b^2+(a-b)^2$ and the definitions of norm in (\ref{define-norm}) lead to
\begin{equation}
\label{Energy-first-order-first}
	\begin{aligned}
\frac{1}{2\Delta t}\big(\|\boldsymbol{u}_{h}^{n+1}\|_{A_{1}}^{2}-\|\boldsymbol{u}_{h}^{n}\|_{A_{1}}^{2}
+\|\boldsymbol{u}_{h}^{n+1}-\boldsymbol{u}_{h}^{n}\|_{A_{1}}^{2}\big)
+\|\boldsymbol{u}_{h}^{n+1}\|_{B}^{2}+\|\boldsymbol{J}_{h}^{n+1}\|_{A_{2}}^{2}\\
+\frac{s^{n+1}}{R^{n+1}}E_{s,h}(\boldsymbol{u}_{h}^{n};\boldsymbol{u}_{h}^{n},\boldsymbol{u}_{h}^{n+1})
-\frac{s^{n+1}}{R^{n+1}}D_{h}(\boldsymbol{J}_{h}^{n},\boldsymbol{u}_{h}^{n+1})
+\frac{s^{n+1}}{R^{n+1}}D_{h}(\boldsymbol{J}_{h}^{n+1},\boldsymbol{u}_{h}^{n})&=0.\\
	\end{aligned}
\end{equation}
Multiplying (\ref{eq:full-discrete-first-order-s}) by $s^{n+1}$ leads to
\begin{equation}
\label{Energy-first-order-second}
	\begin{aligned}
\frac{1}{2\Delta t}\big(|s^{n+1}|^2-|s^{n}|^2+|s^{n+1}-s^{n}|^2\big)+Q|s^{n+1}|^2&\\
-\frac{s^{n+1}}{R^{n+1}}E_{s,h}(\boldsymbol{u}_{h}^{n};\boldsymbol{u}_{h}^{n},\boldsymbol{u}_{h}^{n+1})
+\frac{s^{n+1}}{R^{n+1}}D_{h}(\boldsymbol{J}_{h}^{n},\boldsymbol{u}_{h}^{n+1})
-\frac{s^{n+1}}{R^{n+1}}D_{h}(\boldsymbol{J}_{h}^{n+1},\boldsymbol{u}_{h}^{n})&=0.
	\end{aligned}
\end{equation}
Combining (\ref{Energy-first-order-first}) with (\ref{Energy-first-order-second})  results in obtaining
\begin{equation*}
	\begin{aligned}
\frac{1}{2\Delta t}\big(\|\boldsymbol{u}_{h}^{n+1}\|_{A_{1}}^{2}-\|\boldsymbol{u}_{h}^{n}\|_{A_{1}}^{2}
+\|\boldsymbol{u}_{h}^{n+1}-\boldsymbol{u}_{h}^{n}\|_{A_{1}}^{2}\big)
+\|\boldsymbol{u}_{h}^{n+1}\|_{B}^{2}+\|\boldsymbol{J}_{h}^{n+1}\|_{A_{2}}^{2}&\\
+\frac{1}{2\Delta t}\big(|s^{n+1}|^2-|s^{n}|^2+|s^{n+1}-s^{n}|^2\big)
+Q|s^{n+1}|^2&=0.
	\end{aligned}
\end{equation*}
This yields the conclusion (\ref{Energy-first-order}).
\end{proof}

\begin{prop}
\label{Prop:first-divergence free}
Let $\big(\boldsymbol{u}_{h}^{n+1},p_{h}^{n+1},\boldsymbol{J}_{h}^{n+1},\phi_{h}^{n+1},s^{n+1}\big)\in \boldsymbol{U}_{h}\times
Q_{h}\times\boldsymbol{E}_{h}\times\Psi_{h}\times\mathbb{R}$ be the solution to the equations (\ref{eq:full-discrete-first-order}), the scheme fully satisfies the properties of mass conservation and charge conservation. Namely, $\text{div}\boldsymbol{u}_{h}^{n+1}=0$ and $\text{div}\boldsymbol{J}_{h}^{n+1}=0$.
\end{prop}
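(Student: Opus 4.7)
The plan is to use the standard VEM trick that exploits the compatibility inclusions (\ref{Div-u}) and (\ref{Div-J}) together with the fact that $C_1(\cdot,\cdot)$ and $C_2(\cdot,\cdot)$ are computed exactly (no approximation), as emphasized after (\ref{global_C2}). For any $\boldsymbol{v}_h \in \boldsymbol{U}_h$ and $\boldsymbol{K}_h \in \boldsymbol{E}_h$ the divergences satisfy $\mathrm{div}\boldsymbol{v}_h \in \mathbb{P}_{k_u-1}(K)$ and $\mathrm{div}\boldsymbol{K}_h \in \mathbb{P}_{k_J}(K)$ element-wise, while the global boundary conditions force the integrals over $\Omega$ to vanish, so these local divergences lie in $Q_h$ and $\Psi_h$ respectively. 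This is the key structural observation that lets us test the divergence constraints with the divergences themselves.

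For the mass conservation, I would start from (\ref{eq:full-discrete-first-order-p}), which reads $C_{1,h}(\boldsymbol{u}_h^{n+1},q_h) = \int_\Omega \mathrm{div}\,\boldsymbol{u}_h^{n+1}\, q_h\, \mathrm{d}\Omega = 0$ for all $q_h \in Q_h$. Since $\boldsymbol{u}_h^{n+1} \in \boldsymbol{U}_h$ and the homogeneous Dirichlet condition gives $\int_\Omega \mathrm{div}\,\boldsymbol{u}_h^{n+1}\,\mathrm{d}\Omega = 0$, the inclusion (\ref{Div-u}) shows $\mathrm{div}\,\boldsymbol{u}_h^{n+1} \in Q_h$. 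Choosing $q_h = \mathrm{div}\,\boldsymbol{u}_h^{n+1}$ yields $\|\mathrm{div}\,\boldsymbol{u}_h^{n+1}\|_{0,\Omega}^2 = 0$, hence $\mathrm{div}\,\boldsymbol{u}_h^{n+1} = 0$ pointwise a.e.

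For the charge conservation, the argument is entirely parallel using (\ref{eq:full-discrete-first-order-phi}): test $C_{2,h}(\boldsymbol{J}_h^{n+1},\psi_h) = \kappa\int_\Omega \mathrm{div}\,\boldsymbol{J}_h^{n+1}\, \psi_h\, \mathrm{d}\Omega = 0$ against $\psi_h = \mathrm{div}\,\boldsymbol{J}_h^{n+1}$, which belongs to $\Psi_h$ by (\ref{Div-J}) together with the boundary condition $\boldsymbol{J}_h^{n+1} \cdot \boldsymbol{n} = 0$ that forces zero mean; this gives $\kappa\|\mathrm{div}\,\boldsymbol{J}_h^{n+1}\|_{0,\Omega}^2 = 0$ and hence $\mathrm{div}\,\boldsymbol{J}_h^{n+1} = 0$.

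There is no real obstacle here: both statements reduce to testing a pressure/potential equation with the divergence, and the proof will be essentially two lines once one records the two compatibility inclusions. The only point one must state carefully is that the divergence lies in the zero-mean subspace (so that it is an admissible test function in $Q_h$ or $\Psi_h$), which follows immediately from the homogeneous Dirichlet data for $\boldsymbol{u}$ and the vanishing normal trace of $\boldsymbol{J}$ imposed in (\ref{Boundary:Dirichlet}).
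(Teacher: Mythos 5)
Your proof is correct and follows essentially the same route as the paper: invoke the inclusions (\ref{Div-u}) and (\ref{Div-J}) to see that the divergences are admissible test functions, then test (\ref{eq:full-discrete-first-order-p}) and (\ref{eq:full-discrete-first-order-phi}) with $q_h=\mathrm{div}\,\boldsymbol{u}_h^{n+1}$ and $\psi_h=\mathrm{div}\,\boldsymbol{J}_h^{n+1}$. Your additional remark that the zero-mean condition follows from the boundary data is a detail the paper leaves implicit, but it does not change the argument.
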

\begin{proof}
It is worth noting that for any $(q_{h},\psi_{h})\in Q_{h}\times\Psi_{h}$, there hold
\begin{equation*}
C_{1,h}(\boldsymbol{u}_{h}^{n+1},q_{h}) =0,\quad C_{2,h}(\boldsymbol{J}_{h}^{n+1},\psi_{h}) =0,
\end{equation*}
and $(\text{div}\boldsymbol{u}_{h}^{n+1},\text{div}\boldsymbol{J}_{h}^{n+1})\in Q_{h}\times\Psi_{h}$ (see (\ref{Div-u}) and (\ref{Div-J})). Let $(q_{h},\psi_{h})=(\text{div}\boldsymbol{u}_{h}^{n+1},\text{div}\boldsymbol{J}_{h}^{n+1})$, we can naturally obtain $\text{div}\boldsymbol{u}_{h}^{n+1}=0$ and $\text{div}\boldsymbol{J}_{h}^{n+1}=0$.
\end{proof}
\subsection{A fully discrete second-order SAV scheme \label{section:second-order}}
In this section, we now construct a fully discrete second-order SAV virtual element approximation of equations (\ref{variational-eq}). The initial data $(\boldsymbol{u}_{h}^{0},\boldsymbol{J}_{h}^{0},s^{0})\in\boldsymbol{U}_{h}\times\boldsymbol{E}_{h}\times\mathbb{R}$ is consistent with the initial data given in subsection \ref{section:first-order} and $(\boldsymbol{u}_{h}^{1},\boldsymbol{J}_{h}^{1},p,\phi^{1},s^{1})\in\boldsymbol{U}_{h}\times\boldsymbol{E}_{h}\times Q_{h}\times\Psi_{h}\times\mathbb{R}$ is computed by the equations (\ref{eq:full-discrete-first-order}).

Assuming we have computed $(\boldsymbol{u}_{h}^{n},p_{h}^{n},\boldsymbol{J}_{h}^{n},\phi_{h}^{n},s^{n})\in \boldsymbol{U}_{h}\times
Q_{h}\times\boldsymbol{E}_{h}\times\Psi_{h}\times\mathbb{R}$ for $n>0$, and then we apply the backward Euler discretization in time and the virtual element method in spaces to deduce the numerical scheme reads as: find $\big(\boldsymbol{u}_{h}^{n+1},p_{h}^{n+1},\boldsymbol{J}_{h}^{n+1},\phi_{h}^{n+1},s^{n+1}\big)\in \boldsymbol{U}_{h}\times
Q_{h}\times\boldsymbol{E}_{h}\times\Psi_{h}\times\mathbb{R}$ such that
\begin{subequations}
	\label{eq:full-discrete-second-order}
	\begin{align}
\label{eq:full-discrete-second-order-u}
A_{1,h}(\delta_{t}^{2}\boldsymbol{u}_{h}^{n+1},\boldsymbol{v}_{h})+B_{h}(\boldsymbol{u}_{h}^{n+1},\boldsymbol{v}_{h})+C_{1,h}(\boldsymbol{v}_{h},p_{h}^{n+1})
+\frac{s^{n+1}}{R^{n+1}}\big(E_{s,h}(\boldsymbol{\tilde{u}}_{h}^{n+1};\boldsymbol{\tilde{u}}_{h}^{n+1},\boldsymbol{v}_{h})-D_{h}(\boldsymbol{\tilde{J}}_{h}^{n+1},\boldsymbol{v}_{h})\big)&=0,\\\label{eq:full-discrete-second-order-p}
C_{1,h}(\boldsymbol{u}_{h}^{n+1},q_{h}) &=0,\\\label{eq:full-discrete-second-order-J}
A_{2,h}(\boldsymbol{J}_{h}^{n+1},\boldsymbol{K}_{h})+C_{2,h}(\boldsymbol{K}_{h},\phi_{h}^{n+1})
+\frac{s^{n+1}}{R^{n+1}}D_{h}(\boldsymbol{K}_{h},\boldsymbol{\tilde{u}}_{h}^{n+1})&=0,\\\label{eq:full-discrete-second-order-phi}
C_{2,h}(\boldsymbol{J}_{h}^{n+1},\psi_{h}) &=0,\\ \label{eq:full-discrete-second-order-s} \delta_{t}^{2}s^{n+1}+s^{n+1}Q-\frac{1}{R^{n+1}}E_{s,h}(\boldsymbol{\tilde{u}}_{h}^{n+1};\boldsymbol{\tilde{u}}_{h}^{n+1},\boldsymbol{u}_{h}^{n+1})+\frac{1}{R^{n+1}}D_{h}(\boldsymbol{\tilde{J}}_{h}^{n+1},\boldsymbol{u}_{h}^{n+1})
-\frac{1}{R^{n+1}}D_{h}(\boldsymbol{J}_{h}^{n+1},\boldsymbol{\tilde{u}}_{h}^{n+1})& =0,
	\end{align}
\end{subequations}
for all $\big(\boldsymbol{v}_{h},q_{h},\boldsymbol{K}_{h},\psi_{h}\big)\in \boldsymbol{U}_{h}\times
Q_{h}\times\boldsymbol{E}_{h}\times
\Psi_{h}$  there hold. Subsequently, we establish the unconditionally energy stability of the formulation (\ref{eq:full-discrete-second-order}) as follows.
\begin{thm}
The formulation (\ref{eq:full-discrete-second-order}) is unconditionally energy stable in the sense that the following energy estimate
\begin{equation}
\label{Energy-second-order}
\delta_{t}^{n+1}\mathrm{E}^{n+1}\leq-\|\boldsymbol{u}_{h}^{n+1}\|_{B}^{2}-\|\boldsymbol{J}_{h}^{n+1}\|_{A_{2}}^{2}-Q|s^{n+1}|^2
\end{equation}
with
\begin{equation}
\mathrm{E}^{n+1} = \frac{1}{4}\|\boldsymbol{u}_{h}^{n+1}\|_{A_{1}}^{2} +\frac{1}{4}\|2\boldsymbol{u}_{h}^{n+1}-\boldsymbol{u}_{h}^{n}\|_{A_{1}}^{2}+ \frac{1}{4}|s^{n+1}|^2 + \frac{1}{4}|2s^{n+1}-s^{n}|^2.
\end{equation}
\end{thm}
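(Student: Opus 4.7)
The plan is to mirror the first-order stability proof but replace the elementary identity $2a(a-b)=a^2-b^2+(a-b)^2$ by its BDF2 polarization analogue. Concretely, I would take the test functions $(\boldsymbol{v}_{h},q_{h},\boldsymbol{K}_{h},\psi_{h})=(\boldsymbol{u}_{h}^{n+1},p_{h}^{n+1},\boldsymbol{J}_{h}^{n+1},\phi_{h}^{n+1})$ in (\ref{eq:full-discrete-second-order-u})--(\ref{eq:full-discrete-second-order-phi}), multiply (\ref{eq:full-discrete-second-order-s}) by $s^{n+1}$, and sum the five resulting scalar identities.

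The algebraic core is the BDF2 identity
\begin{equation*}
2(3a-4b+c)\,a = a^{2} + (2a-b)^{2} - b^{2} - (2b-c)^{2} + (a-2b+c)^{2},
\end{equation*}
valid in any inner-product norm. Applied in $\|\cdot\|_{A_{1}}$ and $|\cdot|$, it converts the two time-derivative contributions into
\begin{align*}
A_{1,h}(\delta_{t}^{2,n+1}\boldsymbol{u}_{h},\boldsymbol{u}_{h}^{n+1}) &= \delta_{t}^{n+1}\bigl[\tfrac{1}{4}\|\boldsymbol{u}_{h}^{n+1}\|_{A_{1}}^{2}+\tfrac{1}{4}\|2\boldsymbol{u}_{h}^{n+1}-\boldsymbol{u}_{h}^{n}\|_{A_{1}}^{2}\bigr] + \tfrac{1}{4\Delta t}\|\boldsymbol{u}_{h}^{n+1}-2\boldsymbol{u}_{h}^{n}+\boldsymbol{u}_{h}^{n-1}\|_{A_{1}}^{2}, \\
s^{n+1}\delta_{t}^{2,n+1}s &= \delta_{t}^{n+1}\bigl[\tfrac{1}{4}|s^{n+1}|^{2}+\tfrac{1}{4}|2s^{n+1}-s^{n}|^{2}\bigr] + \tfrac{1}{4\Delta t}|s^{n+1}-2s^{n}+s^{n-1}|^{2},
\end{align*}
whose leading parts assemble exactly to $\delta_{t}^{n+1}\mathrm{E}^{n+1}$, with nonnegative quadratic remainders that will be discarded at the end.

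The crucial cancellation concerns the nonlinear convection and Lorentz coupling terms. Testing (\ref{eq:full-discrete-second-order-u}) with $\boldsymbol{u}_{h}^{n+1}$ produces $\tfrac{s^{n+1}}{R^{n+1}}E_{s,h}(\tilde{\boldsymbol{u}}_{h}^{n+1};\tilde{\boldsymbol{u}}_{h}^{n+1},\boldsymbol{u}_{h}^{n+1})$ and $-\tfrac{s^{n+1}}{R^{n+1}}D_{h}(\tilde{\boldsymbol{J}}_{h}^{n+1},\boldsymbol{u}_{h}^{n+1})$; testing (\ref{eq:full-discrete-second-order-J}) with $\boldsymbol{J}_{h}^{n+1}$ produces $\tfrac{s^{n+1}}{R^{n+1}}D_{h}(\boldsymbol{J}_{h}^{n+1},\tilde{\boldsymbol{u}}_{h}^{n+1})$; multiplying (\ref{eq:full-discrete-second-order-s}) by $s^{n+1}$ produces exactly the three opposite signs. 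Hence all three terms cancel algebraically, without invoking any skew-symmetry of $E_{s,h}$ applied to $\boldsymbol{u}_{h}^{n+1}$ itself and without any sign restriction on $s^{n+1}$ or CFL condition. This is precisely the \emph{zero-energy-contribution} mechanism of the SAV formulation and is what makes the inequality unconditional.

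What remains is the two pressure/potential couplings $C_{1,h}(\boldsymbol{u}_{h}^{n+1},p_{h}^{n+1})$ and $C_{2,h}(\boldsymbol{J}_{h}^{n+1},\phi_{h}^{n+1})$. By Proposition \ref{Prop:first-divergence free} (whose argument transfers verbatim to the present scheme, since (\ref{eq:full-discrete-second-order-p}) and (\ref{eq:full-discrete-second-order-phi}) are identical in structure to their first-order counterparts and (\ref{Div-u}), (\ref{Div-J}) still apply) one has $\mathrm{div}\,\boldsymbol{u}_{h}^{n+1}=0$ and $\mathrm{div}\,\boldsymbol{J}_{h}^{n+1}=0$, so both terms vanish. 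Collecting the dissipation $\|\boldsymbol{u}_{h}^{n+1}\|_{B}^{2}+\|\boldsymbol{J}_{h}^{n+1}\|_{A_{2}}^{2}+Q|s^{n+1}|^{2}$ on the right and dropping the two nonnegative BDF2 residuals $\tfrac{1}{4\Delta t}\|\boldsymbol{u}_{h}^{n+1}-2\boldsymbol{u}_{h}^{n}+\boldsymbol{u}_{h}^{n-1}\|_{A_{1}}^{2}$ and $\tfrac{1}{4\Delta t}|s^{n+1}-2s^{n}+s^{n-1}|^{2}$ yields the claimed inequality. There is no genuine analytic difficulty here; the main obstacle is purely bookkeeping, namely verifying that the BDF2 extrapolations $\tilde{\boldsymbol{u}}_{h}^{n+1}$ and $\tilde{\boldsymbol{J}}_{h}^{n+1}$ enter the three nonlinear contributions in the precise matching pattern required for cancellation, which the scheme (\ref{eq:full-discrete-second-order}) has been designed to ensure.
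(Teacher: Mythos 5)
Your proposal is correct and follows essentially the same route as the paper: the same choice of test functions, the same BDF2 identity $2a(3a-4b+c)=a^2-b^2+(2a-b)^2-(2b-c)^2+(a-2b+c)^2$ applied in the $A_{1}$-norm and to $s$, and the same purely algebraic cancellation of the extrapolated convection and Lorentz terms against the SAV equation. The only (immaterial) difference is that you eliminate the pressure and potential couplings via the divergence-free property, whereas the paper obtains $C_{1,h}(\boldsymbol{u}_{h}^{n+1},p_{h}^{n+1})=0$ and $C_{2,h}(\boldsymbol{J}_{h}^{n+1},\phi_{h}^{n+1})=0$ directly by testing (\ref{eq:full-discrete-second-order-p}) and (\ref{eq:full-discrete-second-order-phi}) with $p_{h}^{n+1}$ and $\phi_{h}^{n+1}$.
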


\begin{proof}
Taking $\big(\boldsymbol{v}_{h},q_{h},\boldsymbol{K}_{h},\psi_{h}\big)=\big(\boldsymbol{u}_{h}^{n+1},p_{h}^{n+1},\boldsymbol{J}_{h}^{n+1},\phi_{h}^{n+1}\big)$ in equations (\ref{eq:full-discrete-second-order-u})-(\ref{eq:full-discrete-second-order-phi}) and applying the identity $2a(3a-4b+c)=a^2-b^2+(2a-b)^2-(2b-c)^2+(a-2b+c)^2$ and the definitions of norm in (\ref{define-norm}) lead to
\begin{equation}
\label{Energy-second-order-first}
	\begin{aligned}
\frac{1}{4\Delta t}\big(\|\boldsymbol{u}_{h}^{n+1}\|_{A_{1}}^{2}-\|\boldsymbol{u}_{h}^{n}\|_{A_{1}}^{2}
+\|2\boldsymbol{u}_{h}^{n+1}-\boldsymbol{u}_{h}^{n}\|_{A_{1}}^{2}
-\|2\boldsymbol{u}_{h}^{n}-\boldsymbol{u}_{h}^{n-1}\|_{A_{1}}^{2}
+\|\boldsymbol{u}_{h}^{n+1}-2\boldsymbol{u}_{h}^{n}+\boldsymbol{u}_{h}^{n-1}\|_{A_{1}}^{2}\big)
\\+\|\boldsymbol{u}_{h}^{n+1}\|_{B}^{2}+\|\boldsymbol{J}_{h}^{n+1}\|_{A_{2}}^{2}
+\frac{s^{n+1}}{R^{n+1}}E_{s,h}(\boldsymbol{\tilde{u}}_{h}^{n};\boldsymbol{\tilde{u}}_{h}^{n},\boldsymbol{u}_{h}^{n+1})
-\frac{s^{n+1}}{R^{n+1}}D_{h}(\boldsymbol{\tilde{J}}_{h}^{n},\boldsymbol{u}_{h}^{n+1})
+\frac{s^{n+1}}{R^{n+1}}D_{h}(\boldsymbol{J}_{h}^{n+1},\boldsymbol{\tilde{u}}_{h}^{n})&=0.\\
	\end{aligned}
\end{equation}
Multiplying (\ref{eq:full-discrete-second-order-s}) by $s^{n+1}$ leads to
\begin{equation}
\label{Energy-second-order-second}
	\begin{aligned}
\frac{1}{4\Delta t}\big(|s^{n+1}|^2-|s^{n}|^2+|2s^{n+1}-s^{n}|^2-|2s^{n}-s^{n-1}|^{2}+|s^{n+1}-2s^{n}+s^{n-1}|^{2}\big)+Q|s^{n+1}|^2&\\
-\frac{s^{n+1}}{R^{n+1}}E_{s,h}(\boldsymbol{\tilde{u}}_{h}^{n};\boldsymbol{\tilde{u}}_{h}^{n},\boldsymbol{u}_{h}^{n+1})
+\frac{s^{n+1}}{R^{n+1}}D_{h}(\boldsymbol{\tilde{J}}_{h}^{n},\boldsymbol{u}_{h}^{n+1})
-\frac{s^{n+1}}{R^{n+1}}D_{h}(\boldsymbol{J}_{h}^{n+1},\boldsymbol{\tilde{u}}_{h}^{n})&=0.
	\end{aligned}
\end{equation}
Combining (\ref{Energy-second-order-first}) with (\ref{Energy-second-order-second})  results in obtaining
\begin{equation*}
	\begin{aligned}
\frac{1}{4\Delta t}\big(\|\boldsymbol{u}_{h}^{n+1}\|_{A_{1}}^{2}-\|\boldsymbol{u}_{h}^{n}\|_{A_{1}}^{2}
+\|2\boldsymbol{u}_{h}^{n+1}-\boldsymbol{u}_{h}^{n}\|_{A_{1}}^{2}
-\|2\boldsymbol{u}_{h}^{n}-\boldsymbol{u}_{h}^{n-1}\|_{A_{1}}^{2}
+\|\boldsymbol{u}_{h}^{n+1}-2\boldsymbol{u}_{h}^{n}+\boldsymbol{u}_{h}^{n-1}\|_{A_{1}}^{2}\big)&\\
+\frac{1}{4\Delta t}\big(|s^{n+1}|^2-|s^{n}|^2+|2s^{n+1}-s^{n}|^2-|2s^{n}-s^{n-1}|^{2}+|s^{n+1}-2s^{n}+s^{n-1}|^{2}\big)&\\
+\|\boldsymbol{u}_{h}^{n+1}\|_{B}^{2}+\|\boldsymbol{J}_{h}^{n+1}\|_{A_{2}}^{2}+Q|s^{n+1}|^2&=0.
	\end{aligned}
\end{equation*}
This yields the conclusion (\ref{Energy-second-order}).
\end{proof}

\begin{prop}
Let $\big(\boldsymbol{u}_{h}^{n+1},p_{h}^{n+1},\boldsymbol{J}_{h}^{n+1},\phi_{h}^{n+1},s^{n+1}\big)\in \boldsymbol{U}_{h}\times
Q_{h}\times\boldsymbol{E}_{h}\times\Psi_{h}\times\mathbb{R}$ be the solution to the equations (\ref{eq:full-discrete-second-order}), the scheme fully satisfies the properties of mass conservation and charge conservation. Namely, $\text{div}\boldsymbol{u}_{h}^{n+1}=0$ and $\text{div}\boldsymbol{J}_{h}^{n+1}=0$.
\end{prop}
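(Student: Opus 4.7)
The plan is to mirror exactly the argument used for the first-order scheme in Proposition \ref{Prop:first-divergence free}, since the divergence-free property is a structural consequence of the two discrete constraints (\ref{eq:full-discrete-second-order-p}) and (\ref{eq:full-discrete-second-order-phi}) together with the inclusions $\mathrm{div}\,\boldsymbol{U}_{h}\subseteq Q_{h}$ and $\mathrm{div}\,\boldsymbol{E}_{h}\subseteq\Psi_{h}$ from (\ref{Div-u}) and (\ref{Div-J}). None of the second-order time-discretisation ingredients (the BDF2 operator $\delta_t^2$, the extrapolation $\tilde{\boldsymbol{u}}_h^{n+1},\tilde{\boldsymbol{J}}_h^{n+1}$, or the SAV update) enter the proof: the two constraint equations (\ref{eq:full-discrete-second-order-p})--(\ref{eq:full-discrete-second-order-phi}) have exactly the same form as in the first-order scheme.

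First I would invoke equation (\ref{eq:full-discrete-second-order-p}), which states $C_{1,h}(\boldsymbol{u}_{h}^{n+1},q_{h})=0$ for every $q_{h}\in Q_{h}$. Because $\boldsymbol{u}_{h}^{n+1}\in\boldsymbol{U}_{h}$, property (\ref{Div-u}) guarantees that $\mathrm{div}\,\boldsymbol{u}_{h}^{n+1}$ is an admissible test function in $Q_{h}$. Choosing $q_{h}=\mathrm{div}\,\boldsymbol{u}_{h}^{n+1}$ and recalling the definition of $C_{1,h}$ as the exact integral of $(\mathrm{div}\,\boldsymbol{v}_{h})q_{h}$ over $\Omega$ yields $\|\mathrm{div}\,\boldsymbol{u}_{h}^{n+1}\|_{0,\Omega}^{2}=0$, hence pointwise $\mathrm{div}\,\boldsymbol{u}_{h}^{n+1}=0$.

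Next I would repeat the argument for the current density using equation (\ref{eq:full-discrete-second-order-phi}), together with the inclusion (\ref{Div-J}): $\mathrm{div}\,\boldsymbol{J}_{h}^{n+1}\in\Psi_{h}$ is an admissible test, and $C_{2,h}$ reduces to $\kappa\int_{\Omega}(\mathrm{div}\,\boldsymbol{J}_{h}^{n+1})\psi_{h}\,\mathrm{d}\Omega$ without any approximation, which immediately gives $\mathrm{div}\,\boldsymbol{J}_{h}^{n+1}=0$.

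There is essentially no obstacle here, as the proof is a direct transcription of the first-order case; the only point worth mentioning is the need to remark explicitly that the BDF2 discretisation does not affect (\ref{eq:full-discrete-second-order-p}) and (\ref{eq:full-discrete-second-order-phi}), so that the divergence-free property is preserved at every time level, including the start-up step $n=1$, which is computed by the first-order scheme and is therefore already known to be divergence-free by Proposition \ref{Prop:first-divergence free}.
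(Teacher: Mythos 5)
Your proposal is correct and follows exactly the paper's argument: the paper simply states that the proof is identical to Proposition \ref{Prop:first-divergence free}, i.e.\ testing the constraints (\ref{eq:full-discrete-second-order-p}) and (\ref{eq:full-discrete-second-order-phi}) with $\mathrm{div}\,\boldsymbol{u}_{h}^{n+1}$ and $\mathrm{div}\,\boldsymbol{J}_{h}^{n+1}$, which are admissible by (\ref{Div-u}) and (\ref{Div-J}). Your additional remark that the BDF2 terms play no role and that the start-up step is covered by the first-order result is a helpful elaboration but does not change the argument.
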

\begin{proof}
The proof procedure is consistent with Proposition \ref{Prop:first-divergence free}.
\end{proof}
\section{Numerical experiments}
In this section, we consider a example to numerically verify the theoretical trend of all the error for a three dimensional inductionless MHD problem.  The numerical experiment is implemented on the software MATLAB.

Now, we present the practical implementation of the first-order fully discrete SAV virtual element formulation (\ref{eq:full-discrete-first-order}).
For $n=0$, we have discussed the implementation in the subsection \ref{section:first-order}. Thus, given the previous solution $(\boldsymbol{u}_{h}^{n},p_{h}^{n},\boldsymbol{J}_{h}^{n},\phi_{h}^{n},s^{n})\in \boldsymbol{U}_{h}\times
Q_{h}\times\boldsymbol{E}_{h}\times
\Psi_{h}\times\mathbb{R}$, the solution $(\boldsymbol{u}_{h}^{n+1},p_{h}^{n+1},\boldsymbol{J}_{h}^{n+1},\phi_{h}^{n+1},s^{n+1})\in \boldsymbol{U}_{h}\times
Q_{h}\times\boldsymbol{E}_{h}\times
\Psi_{h}\times\mathbb{R}$ with $n\geq0$ is calculated as follows:
\begin{itemize}
\item[$\bullet$] \texttt{Stokes-type equations}
\begin{equation}
\label{first-order-implementation-step11}
\left\{
\begin{aligned}
&\text{find}~(\boldsymbol{u}_{1,h}^{n+1},p_{1,h}^{n+1})\in \boldsymbol{U}_{h}\times
Q_{h},~\text{such that}\\
&\frac{A_{1,h}(\boldsymbol{u}_{1,h}^{n+1},\boldsymbol{v}_{h})
}{\Delta t}
+B_{h}(\boldsymbol{u}_{1,h}^{n+1},\boldsymbol{v}_{h})+C_{1,h}(\boldsymbol{v}_{h},p_{1,h}^{n+1}) =\frac{A_{1,h}(\boldsymbol{u}_{h}^{n},\boldsymbol{v}_{h})}{\Delta t} \quad\forall\boldsymbol{v}_{h}\in\boldsymbol{U}_{h},\\
&C_{1,h}(\boldsymbol{u}_{1,h}^{n+1},q_{h}) =0 \quad\forall q_{h}\in Q_{h},
\end{aligned}
\right.
\end{equation}
and
\begin{equation}
\label{first-order-implementation-step12}
\left\{
\begin{aligned}
&\text{find}~(\boldsymbol{u}_{2,h}^{n+1},p_{2,h}^{n+1})\in \boldsymbol{U}_{h}\times
Q_{h},~\text{such that}\\
&\frac{A_{1,h}(\boldsymbol{u}_{2,h}^{n+1},\boldsymbol{v}_{h})}{\Delta t}+ B_{h}(\boldsymbol{u}_{2,h}^{n+1},\boldsymbol{v}_{h})+C_{1,h}(\boldsymbol{v}_{h},p_{2,h}^{n+1}) =D_{h}(\boldsymbol{J}_{h}^{n},\boldsymbol{v}_{h})-E_{s,h}(\boldsymbol{u}_{h}^{n};\boldsymbol{u}_{h}^{n},\boldsymbol{v}_{h})\quad\forall\boldsymbol{v}_{h}\in\boldsymbol{U}_{h},\\
&C_{1,h}(\boldsymbol{u}_{2,h}^{n+1},q_{h}) =0\quad\forall q_{h}\in Q_{h},
\end{aligned}
\right.
\end{equation}
\item[$\bullet$] \texttt{Mixed-Poisson equations}
\begin{equation}
\label{first-order-implementation-step21}
\left\{
\begin{aligned}
&\text{find}~(\boldsymbol{J}_{1,h}^{n+1},\phi_{1,h}^{n+1})\in \boldsymbol{E}_{h}\times
\Psi_{h},~\text{such that}\\
&\frac{A_{2,h}(\boldsymbol{J}_{1,h}^{n+1},\boldsymbol{K}_{h})
}{\Delta t}
+C_{2,h}(\boldsymbol{K}_{h},\phi_{1,h}^{n+1}) =0 \quad\forall\boldsymbol{K}_{h}\in\boldsymbol{E}_{h},\\
&C_{2,h}(\boldsymbol{J}_{1,h}^{n+1},\psi_{h}) =0 \quad\forall \psi_{h}\in \Psi_{h},
\end{aligned}
\right.
\end{equation}
and
\begin{equation}
\label{first-order-implementation-step22}
\left\{
\begin{aligned}
&\text{find}~(\boldsymbol{J}_{2,h}^{n+1},\phi_{2,h}^{n+1})\in \boldsymbol{E}_{h}\times
\Psi_{h},~\text{such that}\\
&\frac{A_{2,h}(\boldsymbol{J}_{2,h}^{n+1},\boldsymbol{K}_{h})}{\Delta t}+C_{2,h}(\boldsymbol{K}_{h},\phi_{2,h}^{n+1}) =-D_{h}(\boldsymbol{K}_{h},\boldsymbol{u}_{h}^{n})\quad\forall\boldsymbol{K}_{h}\in\boldsymbol{E}_{h},\\
&C_{2,h}(\boldsymbol{J}_{2,h}^{n+1},\psi_{h}) =0\quad\forall \psi_{h}\in \Psi_{h},
\end{aligned}
\right.
\end{equation}
\item[$\bullet$]\texttt{linear algebra equation}
\begin{equation}
\label{first-order-implementation-step2}
\left\{
\begin{aligned}
&\text{find}~S^{n+1}\in \mathbb{R},~\text{such that}\\
&\big(\frac{R^{n+1}}{\Delta t}+R^{n+1}Q-\frac{\mathcal{A}_{2}}{R^{n+1}}\big)S^{n+1}=\frac{\mathcal{A}_{1}}{R^{n+1}}+\frac{1}{\Delta t}s^{n},
\end{aligned}
\right.
\end{equation}
where $\mathcal{A}_{i}~(i=1,2)$ are denoted as
\begin{equation*}
\mathcal{A}_{i}= E_{s,h}(\boldsymbol{u}_{h}^{n};\boldsymbol{u}_{h}^{n},\boldsymbol{u}_{i,h}^{n+1})+D_{h}(\boldsymbol{J}_{i,h}^{n+1},\boldsymbol{u}_{h}^{n})-D_{h}(\boldsymbol{J}_{h}^{n},\boldsymbol{u}_{i,h}^{n+1})
\end{equation*}
\item[$\bullet$]\texttt{the solution}
\begin{equation}
\label{first-order-implementation-step3}
\left\{
\begin{aligned}
&(\boldsymbol{u}_{h}^{n+1},p^{n+1},\boldsymbol{J}_{h}^{n+1},\phi^{n+1})=(\boldsymbol{u}_{1,h}^{n+1},p_{1,h}^{n+1},\boldsymbol{J}_{1,h}^{n+1},\phi_{1,h}^{n+1})+S^{n+1}(\boldsymbol{u}_{2,h}^{n+1},p_{2,h}^{n+1},\boldsymbol{J}_{2,h}^{n+1},\phi_{2,h}^{n+1}),\\ &s^{n+1}=S^{n+1}R^{n+1}.
\end{aligned}
\right.
\end{equation}
\end{itemize}

Next, we present the practical implementation of the second-order fully discrete SAV virtual element formulation (\ref{eq:full-discrete-second-order}).
For $n=0,1$, we have discussed the implementation in the subsection \ref{section:second-order}. Thus, given the previous solution $(\boldsymbol{u}_{h}^{n},p_{h}^{n},\boldsymbol{J}_{h}^{n},\phi_{h}^{n},s^{n})\in \boldsymbol{U}_{h}\times
Q_{h}\times\boldsymbol{E}_{h}\times
\Psi_{h}\times\mathbb{R}$, the solution $(\boldsymbol{u}_{h}^{n+1},p_{h}^{n+1},\boldsymbol{J}_{h}^{n+1},\phi_{h}^{n},s^{n+1})\in \boldsymbol{U}_{h}\times
Q_{h}\times\boldsymbol{E}_{h}\times
\Psi_{h}\times\mathbb{R}$ with $n\geq1$ is calculated as follows:
\begin{itemize}
\item[$\bullet$] \texttt{Stokes-type equations}
\begin{equation}
\label{second-order-implementation-step11}
\left\{
\begin{aligned}
&\text{find}~(\boldsymbol{u}_{1,h}^{n+1},p_{1,h}^{n+1})\in \boldsymbol{U}_{h}\times
Q_{h},~\text{such that}\\
&\frac{3A_{1,h}(\boldsymbol{u}_{1,h}^{n+1},\boldsymbol{v}_{h})
}{2\Delta t}
+B_{h}(\boldsymbol{u}_{1,h}^{n+1},\boldsymbol{v}_{h})+C_{1,h}(\boldsymbol{v}_{h},p_{1,h}^{n+1}) =\frac{4A_{1,h}(\boldsymbol{u}_{h}^{n},\boldsymbol{v}_{h})}{2\Delta t}
-\frac{A_{1,h}(\boldsymbol{u}_{h}^{n-1},\boldsymbol{v}_{h})}{2\Delta t} \quad\forall\boldsymbol{v}_{h}\in\boldsymbol{U}_{h},\\
&C_{1,h}(\boldsymbol{u}_{1,h}^{n+1},q_{h}) =0 \quad\forall q_{h}\in Q_{h},
\end{aligned}
\right.
\end{equation}
and
\begin{equation}
\label{second-order-implementation-step12}
\left\{
\begin{aligned}
&\text{find}~(\boldsymbol{u}_{2,h}^{n+1},p_{2,h}^{n+1})\in \boldsymbol{U}_{h}\times
Q_{h},~\text{such that}\\
&\frac{3A_{1,h}(\boldsymbol{u}_{2,h}^{n+1},\boldsymbol{v}_{h})}{2\Delta t}+ B_{h}(\boldsymbol{u}_{2,h}^{n+1},\boldsymbol{v}_{h})+C_{1,h}(\boldsymbol{v}_{h},p_{2,h}^{n+1}) =D_{h}(\boldsymbol{\tilde{J}}_{h}^{n+1},\boldsymbol{v}_{h})-E_{s,h}(\boldsymbol{\tilde{u}}_{h}^{n+1};\boldsymbol{\tilde{u}}_{h}^{n+1},\boldsymbol{v}_{h})\quad\forall\boldsymbol{v}_{h}\in\boldsymbol{U}_{h},\\
&C_{1,h}(\boldsymbol{u}_{2,h}^{n+1},q_{h}) =0\quad\forall q_{h}\in Q_{h},
\end{aligned}
\right.
\end{equation}
\item[$\bullet$] \texttt{Mixed-Poisson equations}
\begin{equation}
\label{second-order-implementation-step21}
\left\{
\begin{aligned}
&\text{find}~(\boldsymbol{J}_{1,h}^{n+1},\phi_{1,h}^{n+1})\in \boldsymbol{E}_{h}\times
\Psi_{h},~\text{such that}\\
&\frac{A_{2,h}(\boldsymbol{J}_{1,h}^{n+1},\boldsymbol{K}_{h})
}{\Delta t}
+C_{2,h}(\boldsymbol{K}_{h},\phi_{1,h}^{n+1}) =0 \quad\forall\boldsymbol{K}_{h}\in\boldsymbol{E}_{h},\\
&C_{2,h}(\boldsymbol{J}_{1,h}^{n+1},\psi_{h}) =0 \quad\forall \psi_{h}\in \Psi_{h},
\end{aligned}
\right.
\end{equation}
and
\begin{equation}
\label{second-order-implementation-step22}
\left\{
\begin{aligned}
&\text{find}~(\boldsymbol{J}_{2,h}^{n+1},\phi_{2,h}^{n+1})\in \boldsymbol{E}_{h}\times
\Psi_{h},~\text{such that}\\
&\frac{A_{2,h}(\boldsymbol{J}_{2,h}^{n+1},\boldsymbol{K}_{h})}{\Delta t}+C_{2,h}(\boldsymbol{K}_{h},\phi_{2,h}^{n+1}) =-D_{h}(\boldsymbol{K}_{h},\boldsymbol{\tilde{u}}_{h}^{n+1})\quad\forall\boldsymbol{K}_{h}\in\boldsymbol{E}_{h},\\
&C_{2,h}(\boldsymbol{J}_{2,h}^{n+1},\psi_{h}) =0\quad\forall \psi_{h}\in \Psi_{h},
\end{aligned}
\right.
\end{equation}
\item[$\bullet$]\texttt{linear algebra equation}
\begin{equation}
\label{second-order-implementation-step2}
\left\{
\begin{aligned}
&\text{find}~S^{n+1}\in \mathbb{R},~\text{such that}\\
&\big(\frac{3R^{n+1}}{2\Delta t}+R^{n+1}Q-\frac{\mathcal{A}_{2}}{R^{n+1}}\big)S^{n+1}=\frac{\mathcal{A}_{1}}{R^{n+1}}+\frac{4}{2\Delta t}s^{n}-\frac{1}{2\Delta t}s^{n-1},
\end{aligned}
\right.
\end{equation}
where $\mathcal{A}_{i}~(i=1,2)$ are denoted as
\begin{equation*}
\mathcal{A}_{i}= E_{s,h}(\boldsymbol{\tilde{u}}_{h}^{n+1};\boldsymbol{\tilde{u}}_{h}^{n+1},\boldsymbol{u}_{i,h}^{n+1})+D_{h}(\boldsymbol{J}_{i,h}^{n+1},\boldsymbol{\tilde{u}}_{h}^{n+1})-D_{h}(\boldsymbol{\tilde{J}}_{h}^{n+1},\boldsymbol{u}_{i,h}^{n+1})
\end{equation*}
\item[$\bullet$]\texttt{the solution}
\begin{equation}
\label{second-order-implementation-step3}
\left\{
\begin{aligned}
&(\boldsymbol{u}_{h}^{n+1},p^{n+1},\boldsymbol{J}_{h}^{n+1},\phi^{n+1})=(\boldsymbol{u}_{1,h}^{n+1},p_{1,h}^{n+1},\boldsymbol{J}_{1,h}^{n+1},\phi_{1,h}^{n+1})+S^{n+1}(\boldsymbol{u}_{2,h}^{n+1},p_{2,h}^{n+1},\boldsymbol{J}_{2,h}^{n+1},\phi_{2,h}^{n+1}),\\ &s^{n+1}=S^{n+1}R^{n+1}.
\end{aligned}
\right.
\end{equation}
\end{itemize}
In sections \ref{test1}, we use three types of decompositions of the standard domain $\Omega=[0,1]^{3}$ (see FIGURE.\ref{polyhedral meshes}):
\begin{itemize}
\item[$\bullet$]\texttt{Dtp}: Distorted triangular prism meshes; see FIGURE.\ref{polyhedral meshes}(a),
\item[$\bullet$]\texttt{Cube}: Structured cubes meshes; see FIGURE.\ref{polyhedral meshes}(b),
\item[$\bullet$]\texttt{Simple-Voronoi}: Simple-Voronoi polyhedral meshes; see FIGURE.\ref{polyhedral meshes}(c).
\end{itemize}
\begin{figure}
\centering
\subfloat[\texttt{Dtp}]{
\label{fig.mesh3da}
\includegraphics[width=5cm]{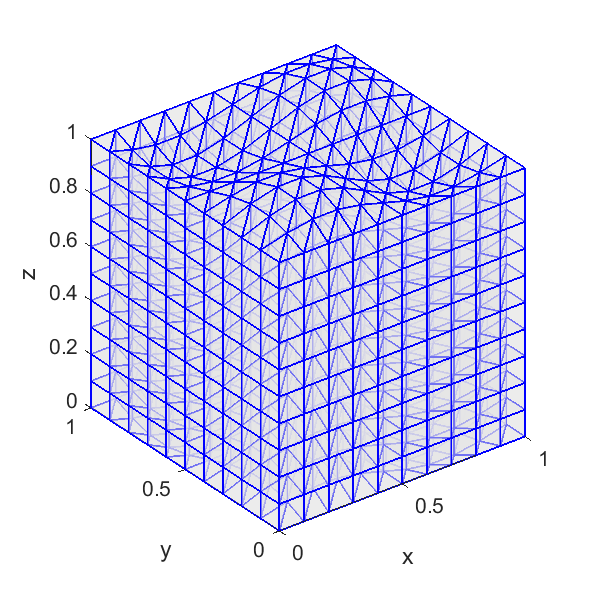}}
\hspace{-0.50cm}
\subfloat[\texttt{Cube}]{
\label{fig.mesh3db}
\includegraphics[width=5cm]{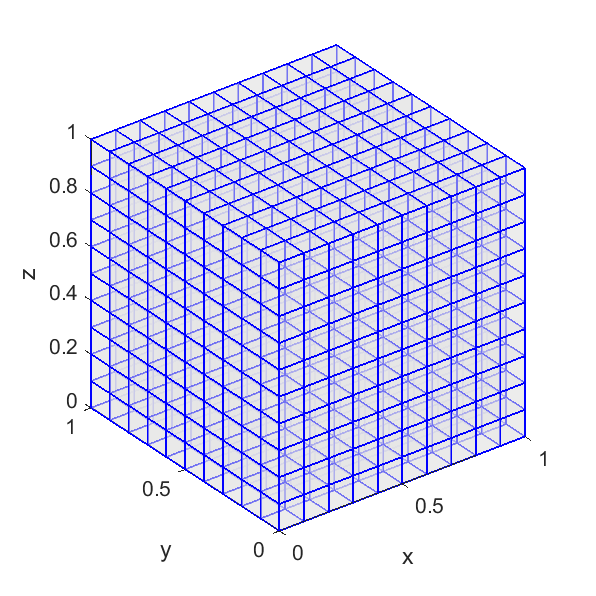}}
\hspace{-0.50cm}
\subfloat[\texttt{Simple-Voronoi}]{
\label{fig.mesh3dc}
\includegraphics[width=5cm]{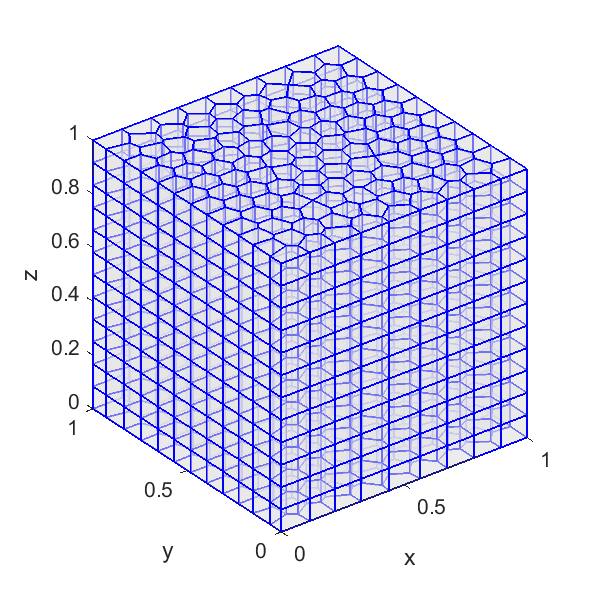}}
\caption{A sample of the used meshes for the examples \ref{test1}.}
\label{polyhedral meshes}
\end{figure}
Then, we associate with each mesh a mesh-size
\begin{equation*}
h:=(\frac{|\mathrm{\Omega}|}{N_{K}})^{\frac{1}{3}},
\end{equation*}
where $N_{K}$ is the number of polyhedrons $K$ in the mesh.
Under the framework of VEM, since the discrete velocity $\boldsymbol{u}_{h}^{N}$ and current density $\boldsymbol{J}_{h}^{N}$ are virtual, we measure approximate absolute errors involving suitable polynomial projections. On the other hand, the discrete pressure $p_{h}^{N}$ and electric potential $\phi_{h}^{N}$ are piecewise linear polynomials over $\mathcal{T}_{h}$. Thence, we compute the following error quantities:
\begin{align*}
e_{\boldsymbol{u}}^{N}&=\sqrt{\sum_{K\in\mathcal{T}_{h}}\|\nabla\boldsymbol{u}(t^{N})-\nabla\mathrm{\Pi}_{k_{u}}^{\nabla,K}\boldsymbol{u}_{h}^{N}\|_{0,K}},\quad
e_{\boldsymbol{J}}^{N}=\sqrt{\sum_{K\in\mathcal{T}_{h}}\|\boldsymbol{J}(t^{N})-\mathrm{\Pi}_{k_{J}}^{0,K}\boldsymbol{J}_{h}^{N}\|_{0,K}},\\
e_{p}^{N} &= \|p(t^{N})-p_{h}^{N}\|_{0,\mathrm{\Omega}},\quad
e_{\phi}^{N} = \|\phi(t^{N})-\phi_{h}^{N}\|_{0,\mathrm{\Omega}},\quad e_{s}^{N} = |s(t^{N})-s_{h}^{N}|.\\
\end{align*}
\subsection{Example 1\label{test1}}
Let the computational domain $\mathrm{\Omega}=[0,1]^{3}$, $T=1$, the parameters $Re = 1$, $\kappa=1$. We test the convergence orders of the time discretization and the fully discrezation. On the one hand, to test the convergence orders of the time discretization, we choose the analytical solution to the three dimensional inductionless MHD equations with load terms as follows:
\begin{align*}
\boldsymbol{u}
=\left[\begin{aligned}
x_{3}\sin(t)\\
x_{3}\\
0
\end{aligned}
\right],\quad
\boldsymbol{J}
=\left[\begin{aligned}
\cos(t)\\
t^{2}\\
0
\end{aligned}
\right],\quad
p=\phi=0,
\end{align*}
The mesh sizes of the three types of meshes are given as $h_{D} = 1.9843e-01$, $h_{C} = 2.5000e-01$, and $h_{S}=2.5000e-01$, respectively. The results presented in FIGURE.\ref{the first-order time discretization} show that the convergence rates of all variables achieve first order accuracy, which is consistent with the expected convergence rates $O(\Delta t)$. The results presented in FIGURE.\ref{the second-order time discretization} show that the convergence rates of all variables achieve second order accuracy, which is consistent with the expected convergence rates $O((\Delta t)^{2})$.
\begin{figure}
\centering
\subfloat[\texttt{Dtp}$-h_{T}$]{
\label{fig.mesh3da_first_time}
\includegraphics[width=5.5cm]{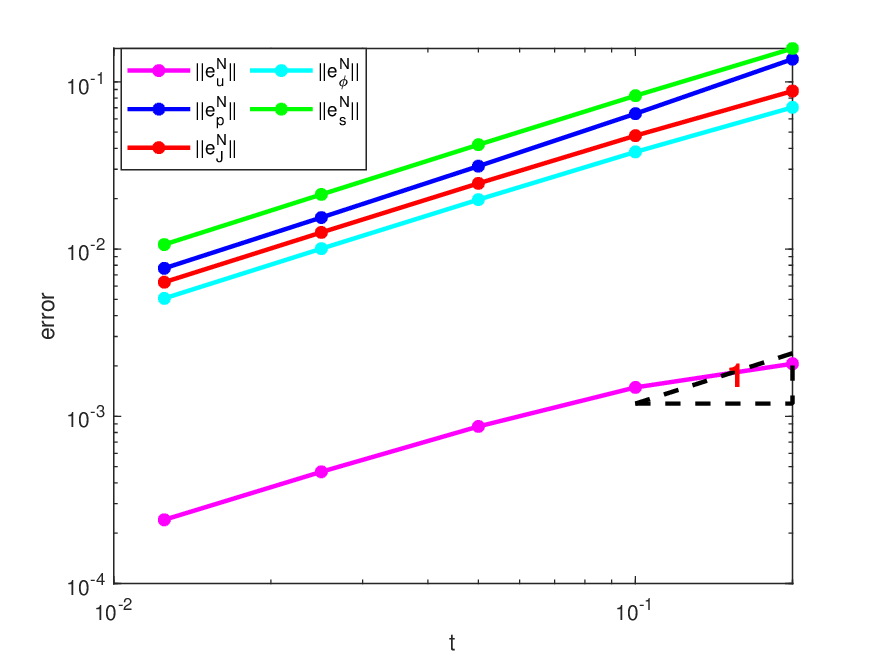}}
\hspace{-0.50cm}
\subfloat[\texttt{Cube}$-h_{C}$]{
\label{fig.mesh3db_first_time}
\includegraphics[width=5.5cm]{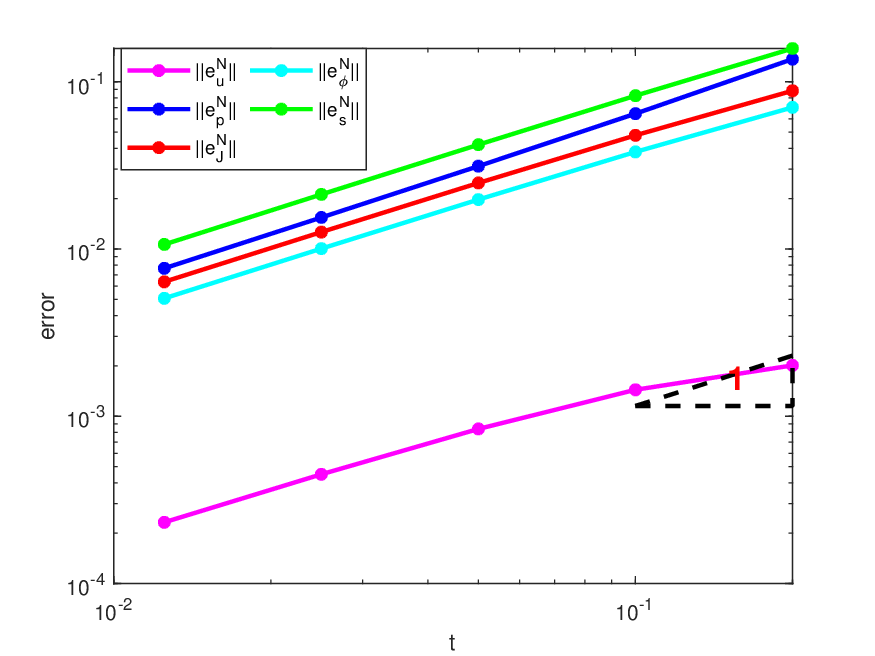}}
\hspace{-0.50cm}
\subfloat[\texttt{Simple-Voronoi}$-h_{V}$]{
\label{fig.mesh3dc_first_time}
\includegraphics[width=5.5cm]{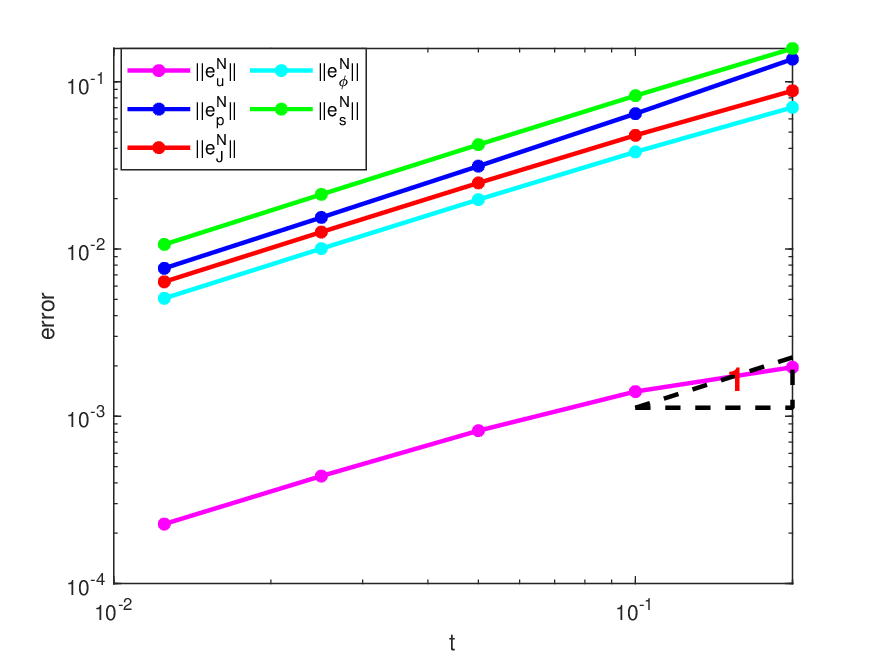}} 
\caption{Convergence rates of the first-order time discretization at $t=1$.}
\label{the first-order time discretization}
\end{figure}
\begin{table}[htp]\footnotesize
\centering
\setlength{\tabcolsep}{1mm}{
\caption{Divergence-free results for the first-order time discretization at $t=1$.\label{table:Divergence-free-time-discrete-first}}
\begin{tabular*}{14cm}{@{\extracolsep{\fill}} c c c | c c c| c c c}
\hline
&\multicolumn{2}{c|}{\texttt{Dtp}}& &
\multicolumn{2}{c|}{\texttt{Cube}}& &
\multicolumn{2}{c}{\texttt{Simple-Voronoi}} \\
\hline
 $\Delta t$ & $\|\text{div}\boldsymbol{u}_{h}^{N}\|_{0}$  & $\|\text{div}\boldsymbol{J}_{h}^{N}\|_{0}$ &$\Delta t$ &$\|\text{div}\boldsymbol{u}_{h}^{N}\|_{0}$& $\|\text{div}\boldsymbol{J}_{h}^{N}\|_{0}$&$\Delta t$ &$\|\text{div}\boldsymbol{u}_{h}^{N}\|_{0}$ &$\|\text{div}\boldsymbol{J}_{h}^{N}\|_{0}$\\
 \hline
2.0000e-01 & 5.3166e-16 & 5.7501e-15 &2.0000e-01 & 3.2307e-16 & 4.3805e-15 & 2.0000e-01 & 3.5919e-16 & 4.4610e-15\\
1.0000e-01 & 4.5985e-16 & 5.7240e-15 &1.0000e-01 & 3.9852e-16 & 4.4316e-15 & 1.0000e-01 & 3.4773e-16 & 4.5498e-15\\
5.0000e-02 & 4.8857e-16 & 5.8068e-15 &5.0000e-02 & 2.9806e-16 & 4.2900e-15 & 5.0000e-02 & 3.5333e-16 & 4.7274e-15\\
2.5000e-02 & 4.9969e-16 & 5.7903e-15 &2.5000e-02 & 3.4155e-16 & 4.2666e-15 & 2.5000e-02 & 3.4442e-16 & 4.3970e-15\\
1.2500e-02 & 5.5949e-16 & 5.5778e-15 &1.2500e-02 & 3.4429e-16 & 4.2531e-15 & 1.2500e-02 & 3.5727e-16 & 4.5223e-15\\
\hline
\end{tabular*}}
\end{table}
\begin{figure}
\centering
\subfloat[\texttt{Dtp}$-h_{T}$]{
\label{fig.mesh3da_second_time}
\includegraphics[width=5.5cm]{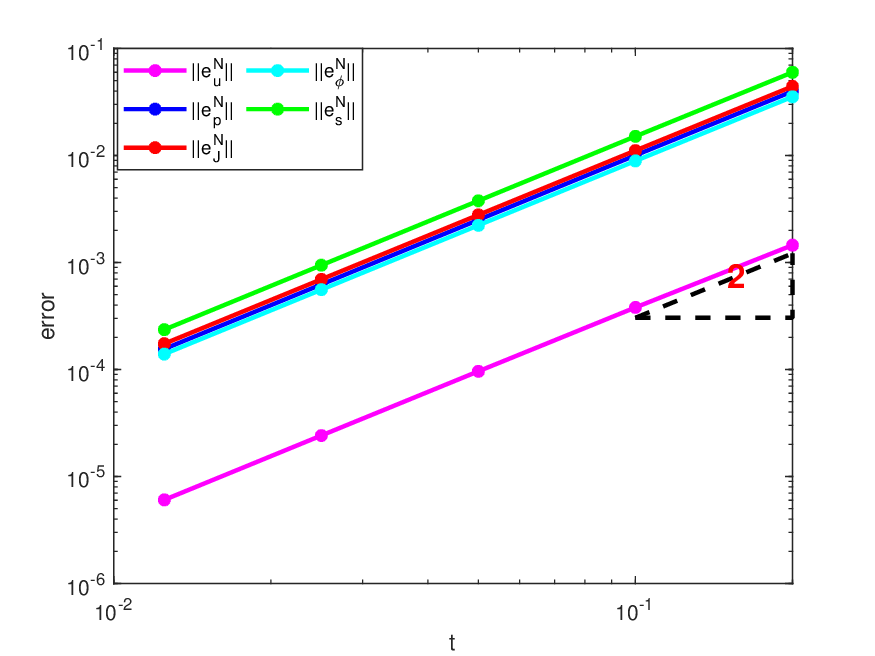}}
\hspace{-0.50cm}
\subfloat[\texttt{Cube}$-h_{C}$]{
\label{fig.mesh3db_second_time}
\includegraphics[width=5.5cm]{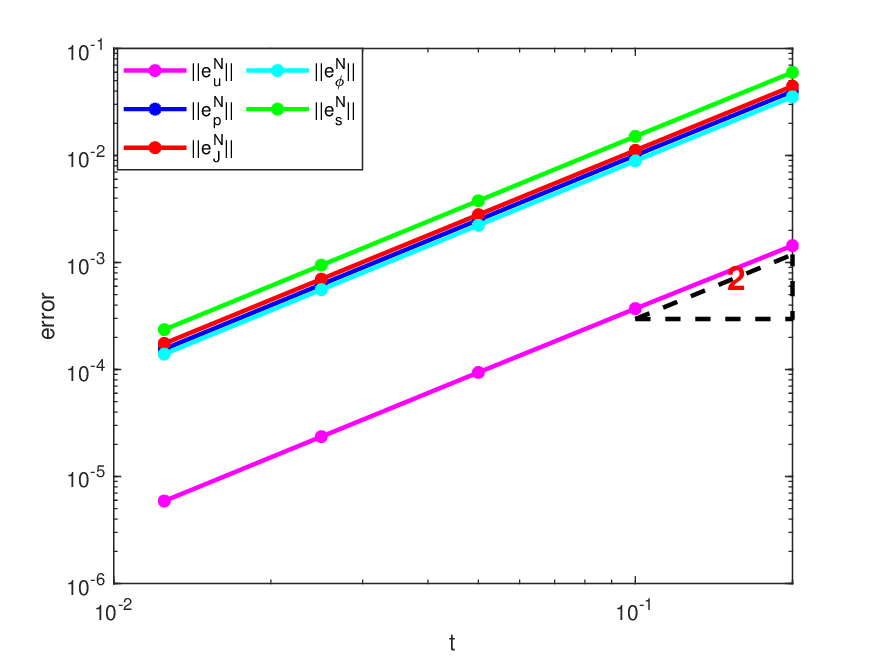}}
\hspace{-0.50cm}
\subfloat[\texttt{Simple-Voronoi}$-h_{V}$]{
\label{fig.mesh3dc_second_time}
\includegraphics[width=5.5cm]{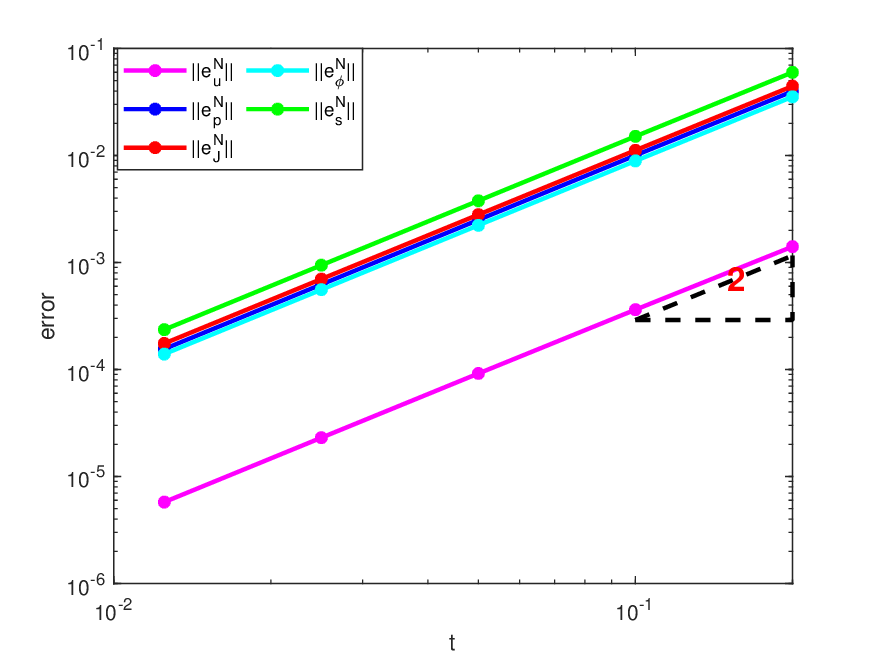}} 
\caption{Convergence rates of the second-order time discretization at $t=1$.}
\label{the second-order time discretization}
\end{figure}
\begin{table}[htp]\footnotesize
\centering
\setlength{\tabcolsep}{1mm}{
\caption{Divergence-free results for the second-order time discretization at $t=1$.\label{table:Divergence-free-time-discrete-second}}
\begin{tabular*}{14cm}{@{\extracolsep{\fill}} c c c | c c c| c c c}
\hline
&\multicolumn{2}{c|}{\texttt{Dtp}}& &
\multicolumn{2}{c|}{\texttt{Cube}}& &
\multicolumn{2}{c}{\texttt{Simple-Voronoi}} \\
\hline
 $\Delta t$ & $\|\text{div}\boldsymbol{u}_{h}^{N}\|_{0}$  & $\|\text{div}\boldsymbol{J}_{h}^{N}\|_{0}$ &$\Delta t$ &$\|\text{div}\boldsymbol{u}_{h}^{N}\|_{0}$& $\|\text{div}\boldsymbol{J}_{h}^{N}\|_{0}$&$\Delta t$ &$\|\text{div}\boldsymbol{u}_{h}^{N}\|_{0}$ &$\|\text{div}\boldsymbol{J}_{h}^{N}\|_{0}$\\
 \hline
2.0000e-01 & 4.7807e-16 & 5.7669e-15 &2.0000e-01 & 3.5411e-16 & 4.4411e-15 & 2.0000e-01 & 3.9349e-16 & 4.5812e-15\\
1.0000e-01 & 5.3165e-16 & 5.7562e-15 &1.0000e-01 & 3.6550e-16 & 4.2818e-15 & 1.0000e-01 & 3.6207e-16 & 4.6888e-15\\
5.0000e-02 & 4.8159e-16 & 5.6531e-15 &5.0000e-02 & 3.2546e-16 & 4.4019e-15 & 5.0000e-02 & 3.3791e-16 & 4.5448e-15\\
2.5000e-02 & 4.9765e-16 & 5.5286e-15 &2.5000e-02 & 3.5570e-16 & 4.3388e-15 & 2.5000e-02 & 3.1380e-16 & 4.5415e-15\\
1.2500e-02 & 4.7018e-16 & 5.6786e-15 &1.2500e-02 & 3.5816e-16 & 4.3915e-15 & 1.2500e-02 & 3.4920e-16 & 4.5984e-15\\
\hline
\end{tabular*}}
\end{table}

 On the other hand, to validate the convergence orders of the fully discretization, we choose the mesh sizes $h_{i}$ $(i=D,C,S)$ and the time step simultaneously satisfy $\Delta t = h_{i}^{2}$ and $\Delta t = h_{i}$ for the first-order scheme and the second-order scheme, respectively. The following analytical solution to the three dimensional inductionless MHD equations with load terms as follows:
 \begin{align*}
\boldsymbol{u}
=&\left[\begin{aligned}
\sin(\pi x_{1})\cos(\pi x_{2})\cos(\pi x_{3})\exp(-t)\\
\cos(\pi x_{1})\sin(\pi x_{2})\cos(\pi x_{3})\exp(-t)\\
-2\cos(\pi x_{1})\cos(\pi x_{2})\sin(\pi x_{3})\exp(-t)
\end{aligned}
\right],\quad
\boldsymbol{J}
=\left[\begin{aligned}
x_{2}(1-x_{2})x_{3}(1-x_{3})\exp(-t)\\
x_{1}(1-x_{1})x_{3}(1-x_{3})\exp(-t)\\
x_{1}(1-x_{1})x_{2}(1-x_{2})\exp(-t)
\end{aligned}
\right],\\
p=&-\pi\cos(\pi x_{1})\cos(\pi x_{2})\sin(\pi x_{3})\exp(-t),~~~~~~~\phi=(x^2+y^2+z^2-1)\exp(-t),
\end{align*}
The mesh sizes of the three types of meshes are given as $h_{D} = [3.9685e-01,1.9843e-01,1.3228e-01,9.9213e-02,7.9370e-02]$, $h_{C} = [ 5.0000e-01,2.5000e-01,1.6667e-01,1.2500e-01,1.0000e-01]$, and $h_{S}=[5.0000e-01,2.5000e-01,1.6466e-01,1.3069e-01, 8.6670e-02]$, respectively. The results presented in FIGURES.\ref{the first-order fully discretization}-\ref{the second-order fully discretization} show that the convergent orders of all variables achieve second-order accuracy, which are consistent with the expected convergence rates $O(h^{2})$. To sum up, the numerical results are consistent with the convergence order of results we expect for the proposed formulations (\ref{eq:full-discrete-first-order}) and (\ref{eq:full-discrete-second-order}). In particularly, we note that the final discrete velocity and the discrete current density are both $pointwise$ divergence-free in Tables \ref{table:Divergence-free-full-discrete-first}-\ref{table:Divergence-free-full-discrete-second}.
\begin{figure}
\centering
\subfloat[\texttt{Dtp}$-h_{T}$]{
\label{fig.mesh3da_first_mixed}
\includegraphics[width=5.5cm]{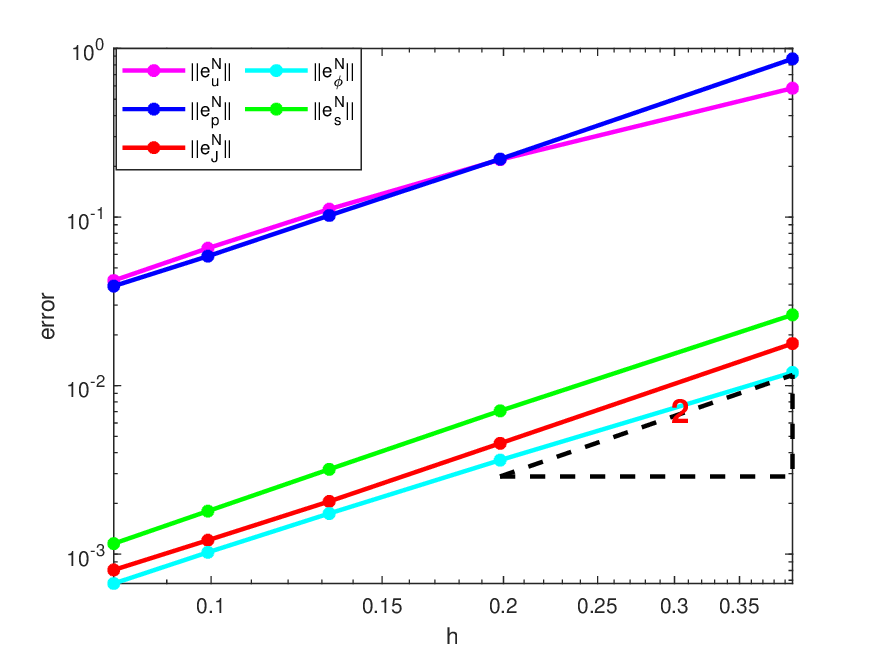}}
\hspace{-0.50cm}
\subfloat[\texttt{Cube}$-h_{C}$]{
\label{fig.mesh3db_first_mixed}
\includegraphics[width=5.5cm]{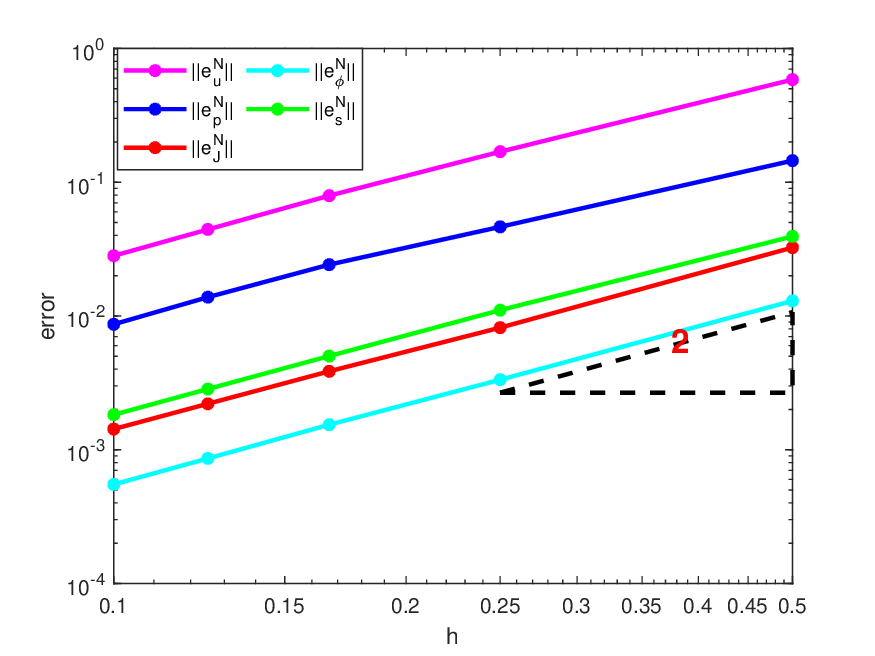}}
\hspace{-0.50cm}
\subfloat[\texttt{Simple-Voronoi}$-h_{V}$]{
\label{fig.mesh3dc_first_mixed}
\includegraphics[width=5.5cm]{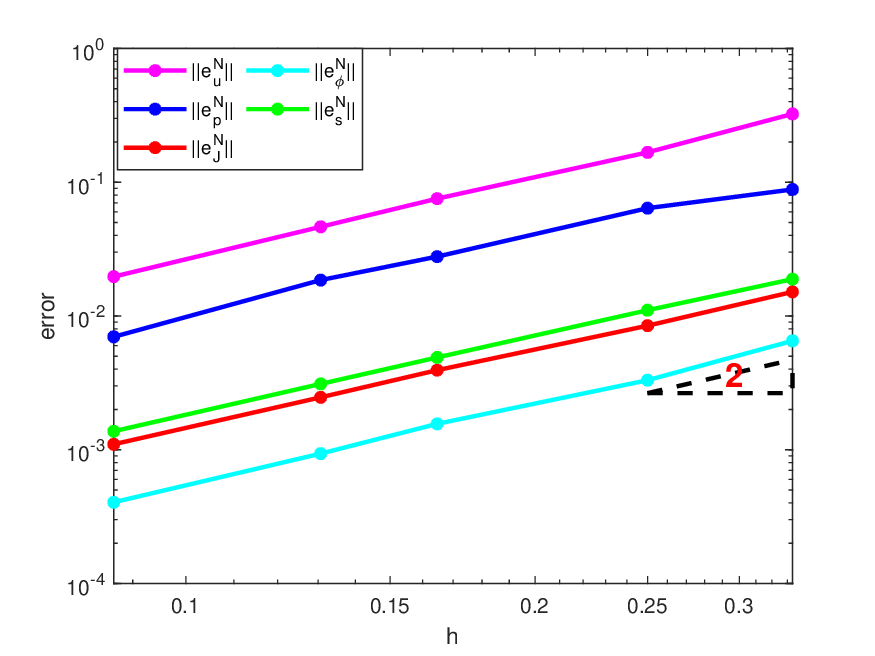}} 
\caption{Convergence rates of the first-order fully discretization at $t=1$, $\Delta t = h_{i}^{2}$.}
\label{the first-order fully discretization}
\end{figure}
\begin{table}[htp]\footnotesize
\centering
\setlength{\tabcolsep}{1mm}{
\caption{Divergence-free results for the first-order fully discretization at $t=1$, $\Delta t = h_{i}^{2}$.\label{table:Divergence-free-full-discrete-first}}
\begin{tabular*}{14cm}{@{\extracolsep{\fill}} c c c | c c c| c c c}
\hline
&\multicolumn{2}{c|}{\texttt{Dtp}}&&
\multicolumn{2}{c|}{\texttt{Cube}}&
\multicolumn{2}{c}{\texttt{Simple-Voronoi}} \\
\hline
 $h_{D}$ & $\|\text{div}\boldsymbol{u}_{h}^{N}\|_{0}$  & $\|\text{div}\boldsymbol{J}_{h}^{N}\|_{0}$ &$h_{C}$ &$\|\text{div}\boldsymbol{u}_{h}^{N}\|_{0}$& $\|\text{div}\boldsymbol{J}_{h}^{N}\|_{0}$&$h_{S}$ &$\|\text{div}\boldsymbol{u}_{h}^{N}\|_{0}$ &$\|\text{div}\boldsymbol{J}_{h}^{N}\|_{0}$\\
 \hline
3.9685e-01 & 9.0556e-17 & 1.2957e-16 & 5.0000e-01 & 9.1798e-17 & 4.6301e-17 & 3.3333e-01 & 1.0958e-16 & 1.7532e-16\\
1.9843e-01 & 2.2485e-16 & 3.5585e-16 & 2.5000e-01 & 1.5227e-16 & 2.0245e-16 & 2.5000e-01 & 1.4234e-16 & 3.6880e-16\\
1.3228e-01 & 3.1521e-16 & 5.7200e-16 & 1.6667e-01 & 2.3517e-16 & 3.8858e-16 & 1.6466e-01 & 2.6326e-16 & 1.0606e-15\\
9.9213e-02 & 4.3997e-16 & 8.1408e-16 & 1.2500e-01 & 3.3393e-16 & 5.5315e-16 & 1.3069e-01 & 3.2147e-16 & 1.8076e-15\\
7.9370e-02 & 5.5049e-16 & 1.0867e-15 & 1.0000e-01 & 4.1530e-16 & 7.2736e-16 & 8.6670e-02 & 4.6727e-16 & 4.1647e-15\\
\hline
\end{tabular*}}
\end{table}
\begin{figure}
\centering
\subfloat[\texttt{Dtp}$-h_{T}$]{
\label{fig.mesh3da_second_mixed}
\includegraphics[width=5.5cm]{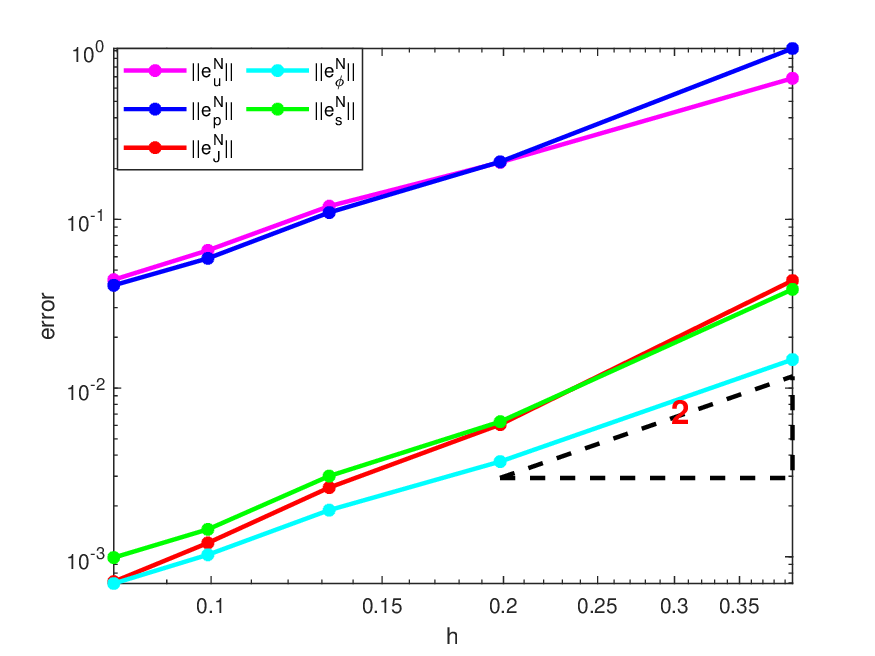}}
\hspace{-0.50cm}
\subfloat[\texttt{Cube}$-h_{C}$]{
\label{fig.mesh3db_first_mixed}
\includegraphics[width=5.5cm]{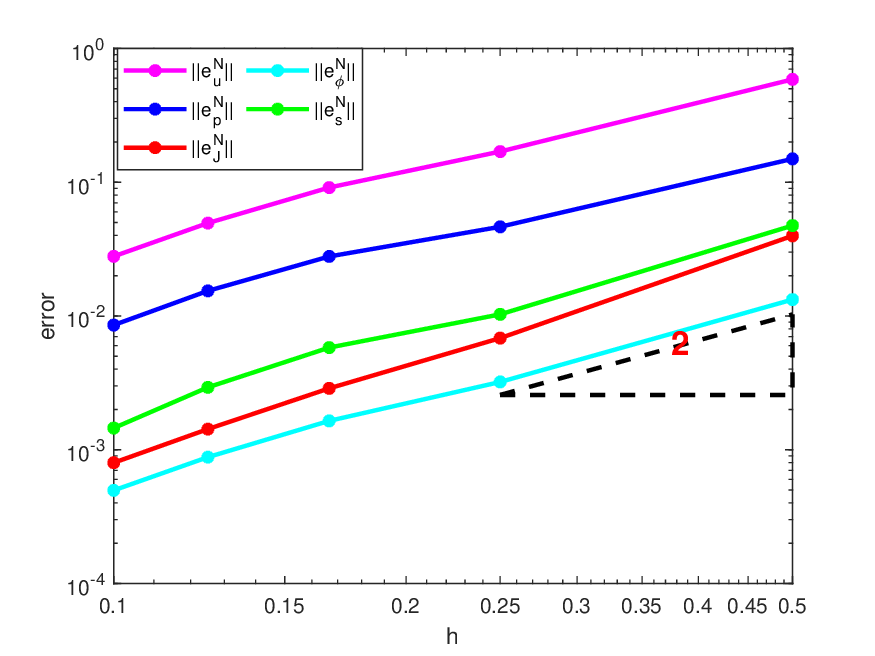}}
\hspace{-0.50cm}
\subfloat[\texttt{Simple-Voronoi}$-h_{V}$]{
\label{fig.mesh3dc_first_mixed}
\includegraphics[width=5.5cm]{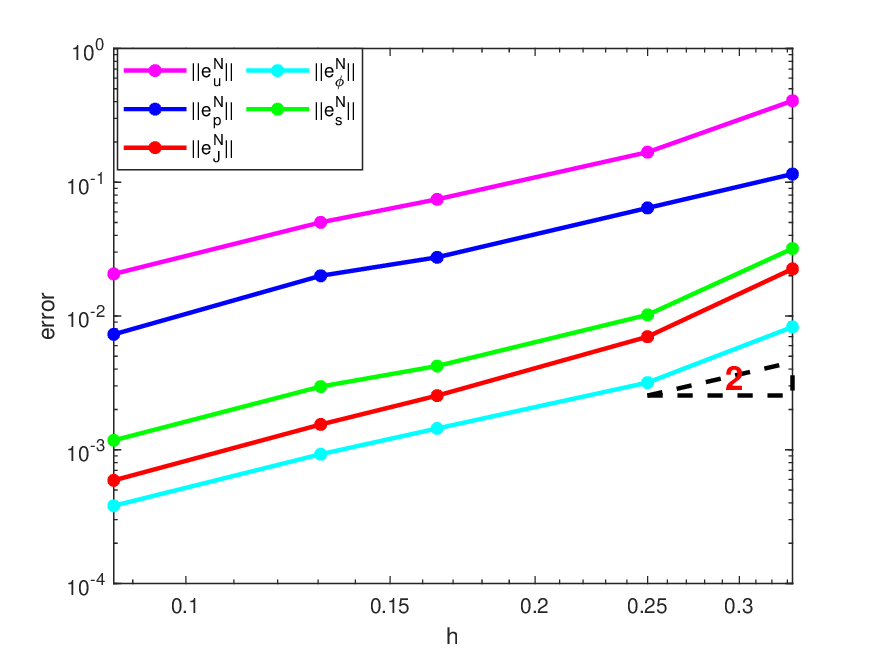}} 
\caption{Convergence rates of the second-order fully discretization at $t=1$, $\Delta t = h_{i}$.}
\label{the second-order fully discretization}
\end{figure}
\begin{table}[htp]\footnotesize
\centering
\setlength{\tabcolsep}{1mm}{
\caption{Divergence-free results for the second-order fully discretization at $t=1$, $\Delta t = h_{i}$.\label{table:Divergence-free-full-discrete-second}}
\begin{tabular*}{14cm}{@{\extracolsep{\fill}} c c c | c c c| c c c}
\hline
&\multicolumn{2}{c|}{\texttt{Dtp}}&&
\multicolumn{2}{c|}{\texttt{Cube}}&
\multicolumn{2}{c}{\texttt{Voronoi}} \\
\hline
 $h_{D}$ & $\|\text{div}\boldsymbol{u}_{h}^{N}\|_{0}$  & $\|\text{div}\boldsymbol{J}_{h}^{N}\|_{0}$ &$h_{C}$ &$\|\text{div}\boldsymbol{u}_{h}^{N}\|_{0}$& $\|\text{div}\boldsymbol{J}_{h}^{N}\|_{0}$&$h_{S}$ &$\|\text{div}\boldsymbol{u}_{h}^{N}\|_{0}$ &$\|\text{div}\boldsymbol{J}_{h}^{N}\|_{0}$\\
 \hline
3.9685e-01 & 1.1214e-16 & 1.3260e-16 & 5.0000e-01 & 6.1085e-17 & 4.8062e-17 &  3.3333e-01 & 6.1085e-17 & 1.8355e-16\\
1.9843e-01 & 2.2078e-16 & 3.5244e-16 & 2.5000e-01 & 1.5315e-16 & 1.9309e-16 &  2.5000e-01 & 1.7680e-16 & 3.0536e-16\\
1.3228e-01 & 3.4947e-16 & 6.0643e-16 & 1.6667e-01 & 3.0154e-16 & 4.2784e-16 &  1.6466e-01 & 2.2827e-16 & 9.3946e-16\\
9.9213e-02 & 4.4251e-16 & 8.0859e-16 & 1.2500e-01 & 3.7782e-16 & 6.0243e-16 &  1.3069e-01 & 3.2332e-16 & 1.8848e-15\\
7.9370e-02 & 5.8306e-16 & 1.0862e-15 & 1.0000e-01 & 4.0934e-16 & 7.1577e-16 &  8.6670e-02 & 4.5275e-16 & 4.1722e-15\\
\hline
\end{tabular*}}
\end{table}
\section{Conclusion}
This paper proposes a novel first-order and a novel second-order fully discrete virtual element schemes based on the scalar auxiliary variable method for the three dimensional inductionless magnetohydrodynamics problem. The backward Eular formula and the backward differential formula are used for the time discretization, and two types conservation virtual element formulations are employed for spatial discretization. Thus, the mass conservation in the
velocity field and the charge conservation in the current density field are kept by taking characteristics
of the virtual element method's discrete complex structures. Moreover, the nonlinear term is handled explicitly based on the SAV method. The current density field is decoupled from the momentum equation, and the velocity field is decoupled from Ohm's law. Thus, we only need to solve the Stokes and Mixed-Poisson subproblems with constant coefficients at each time step, which ensure the high efficiency and unconditional stability of schemes.
\bibliographystyle{plain}
\bibliography{ref}

\end{document}